\newtheorem{theor}{Theorem}
\newtheorem{lem}{Lemma}
\newtheorem{cor}{Corollary}
\newtheorem{proposition}{Proposition}
\theoremstyle{remark}
\newtheorem*{rem}{Remark}
\newcommand{\RR}{\ensuremath{\mathbb{R}}}
\newcommand{\ZZ}{\ensuremath{\mathbb{Z}}}
\newcommand{\Eseven}{\mathsf{E}_7}
\newcommand{\Esix}{\mathsf{E}_6}
\newcommand{\Efive}{\mathsf{E}_5}
\newcommand{\KnP}{\mathsf{K}_{n+1}}
\newcommand{\An}{\mathsf{A}_{n}}
\DeclareMathOperator{\Stab}{Stab}
\DeclareMathOperator{\Aut}{Aut}
\DeclareMathOperator{\Lin}{Lin}
\DeclareMathOperator{\conv}{conv}
\DeclareMathOperator{\diplo}{diplo}
\DeclareMathOperator{\Fan}{Fan}
\begin{document}

\author{Mathieu Dutour Sikiri\'c}
\address{Mathieu Dutour Sikiri\'c, Rudjer Boskovi\'c Institute, Bijenicka 54, 10000 Zagreb, Croatia}
\email{mdsikir@irb.hr}
\thanks{The first author is supported by the Croatian Ministry of Science, Education and Sport under contract 098-0982705-2707}

\author{Viacheslav Grishukhin}
\address{Viacheslav Grishukhin, CEMI Russian Academy of Sciences,
Nakhimovskii prosp. 47 117418 Moscow, Russia}
\email{grishuhn@cemi.rssi.ru}

\author{Alexander Magazinov}
\address{Alexander Magazinov, Steklov Mathematical Institute of RAS, Gubkina str. 8, 119991 Moscow, Russia}
\email{magazinov-al@yandex.ru}
\thanks{The third author is supported by the Russian government project 11.G34.31.0053 and RFBR grant 11-01-00633}

\markleft{On the sum of a parallelotope and a zonotope}

\title{On the sum of the Voronoi polytope of a lattice with a zonotope}

\date{}

\begin{abstract}
A {\em parallelotope} $P$ is a polytope that admits a facet-to-facet tiling of space by translation copies of $P$ along a lattice.
The Voronoi cell $P_V(L)$ of a lattice $L$ is an example of a
parallelotope.
A parallelotope can be uniquely decomposed as the Minkowski sum of a zone
closed parallelotope $P$ and a zonotope $Z(U)$, where $U$ is the set of vectors
used to generate the zonotope.
In this paper we consider the related question: When is the Minkowski
sum of a general parallelotope and a zonotope $P+Z(U)$ a parallelotope?
We give two necessary conditions and show that the vectors $U$ have to
be {\em free}. Given a set $U$ of free vectors, we give several methods
for checking if $P + Z(U)$ is a parallelotope.
Using this we classify such zonotopes for some highly symmetric lattices.

In the case of the root lattice $\Esix$, it is possible to give a more
geometric description of the admissible sets of vectors $U$.
We found that the set of admissible vectors, called free vectors,
is described by the well-known configuration of $27$ lines in a cubic.
Based on a detailed study of the geometry of $P_V(\Esix)$,
we give a simple characterization of the configurations of vectors $U$
such that $P_V(\Esix) + Z(U)$ is a parallelotope.
The enumeration yields $10$ maximal families of vectors, which are
presented by their description as regular matroids.
\end{abstract}

\maketitle

\section*{Acknowledgments}
This article is dedicated to the memory of the
late Evgenii Baranovskii (died in March 2012).

\section{Introduction}\label{sect:Intro}

Let $L = \ZZ v_1+\dots + \ZZ v_n$ be a {\em lattice} of rank $n$ in Euclidean
space given by $n$ independent vectors $(v_i)_{1\leq i\leq n}$ in $\RR^n$.
The {\em Voronoi cell} (or {\em Voronoi polytope}) of $L$ is the convex polytope
\begin{equation*}
P_V(L) = \left\{x \in \RR^n : v^Tx\le\frac{1}{2}v^2\mbox{  for all  }v\in L - \{0\} \right\},
\end{equation*}
where $v^Tx$ is scalar product of vectors $v,x\in\RR^n$.
By ${\mathcal R}$ we denote the set of relevant vectors, i.e. vectors $v\in L$ that determine facet defining inequalities of $P_V(L)$.
The quotient group $L/2L$ allows to determine the relevant vectors. A coset $v + 2L$ with $v\in L$ is called {\em simple} if it has a unique, up to sign, minimal vector. One can prove that ${\mathcal R}$ is formed by the simple cosets.
Above description of the Voronoi polytope $P_V(L)$ is given in an orthonormal basis of $\RR^n$. This basis is self dual.

If we consider vectors of the lattice $L$ in a basis $B$ of $L$, and points $x\in\RR^n$ in the dual basis $B^*$, then we obtain the following affinely equivalent description of $P_V(L)$:
\begin{equation*}
P_V(L,f) = \left\{x \in \RR^n : z^Tx\le f(z)\mbox{  for all  } z\in{\mathcal Z}\right\},
\end{equation*}
where ${\mathcal Z}\subset {\mathbb Z}^n$ is a set of {\em representations} of minimal vectors vectors of $L$ in the basis $B$. Here $f(z)=\frac{1}{2}z^T Dz$ is a quadratic form related to the chosen basis of $L$, and $D$ is Gram matrix of the basis.

For a root lattice $L$ the set $\mathcal R$ is a set of all roots, i.e. of all vectors $v\in L$ of norm $v^2=2$.   Root lattices ${\mathsf A}_n$, ${\mathsf D}_n$, for $n\ge 1$, and ${\mathsf E}_n$, for $n=6,7,8$, are described in many books and papers on lattices.
See, for example, \cite{CS}.

A {\em parallelotope} is a polytope whose translation copies under a lattice $L$ fill the space without gaps and intersections by inner points. For a given lattice $L$, $\{P_V(L)+v\}_{v\in L}$ define a tiling of $\RR^n$ and so $P_V(L)$ is a parallelotope.

We call a {\em facet normal} each vector $p$ orthogonal to a hyperplane supporting a facet $F(p)$, i.e., an $(n-1)$-dimensional face, of an $n$-parallelotope $P$. If $P=P_V(L)$ is a Voronoi polytope defined above, then every its facet normal $p$ is collinear to some lattice vector $s$ such that the facet $F(p)$ 
is shared by the parallelotopes $P_V(L)$ and $P_V(L)+s$. Such lattice vectors are called {\it facet vectors} of $F(p)$.

A parallelotope $P$ is necessarily centrally symmetric and its facets are
necessarily centrally symmetric.
A {\em $k$-belt} of an $n$-dimensional polytope $P$, whose facets are
centrally symmetric, is a family of $k$ facets $F_1,F_2,\dots,F_k$ such
that $F_i\cap F_{i+1}$ and $F_{i}\cap F_{i-1}$ are antipodal
$(n-2)$-dimensional faces in $F_i$ for $1\le i\le k$, where the indexing
$i$ in $F_{i}$ taken modulo $k$. A polytope $P$ is a parallelotope if and
only if the following {\em Venkov conditions} hold \cite{venkov,mcmullen,zong}:
\begin{itemize}
\item $P$ is centrally symmetric;
\item The facets of $P$ are centrally symmetric;
\item The facets of $P$ are organized into $4$- and $6$-belts.
\end{itemize}
A still open conjecture of Voronoi \cite{voronoi} asserts that any
parallelotope is affinely equivalent to a Voronoi polytope.
Voronoi conjecture has been solved up to dimension $5$ \cite{engelE6s}.
For a given set $U$ of vectors, the {\em zonotope} $Z(U)$ is the
Minkowski sum
\begin{equation*}
Z(U) = \sum_{u\in U} z(u),
\end{equation*}
where
\begin{equation}
\label{zu}
z(u)=[-u,u]=\{x\in{\mathbb R}^n:x=\lambda u, \mbox{  }-1\le\lambda\le 1\}
\end{equation}
is a segment of a line spanned by the vector $u$.

Voronoi's conjecture has been proved for zonotopes \cite{dicing,dicing2}.

For a parallelotope $P$, a vector $v$ is called {\em free}
if the Minkowski sum $P+z(v)$ is again a parallelotope. In \cite{Gr} 
free vectors were characterized by the requirement that $z(v)$ is parallel to 
at least one facet of each $6$-belt of $P$, but the argument that this condition 
was both necessary and sufficient was incomplete; a complete proof is provided below in 
Section~\ref{SectionEnumAlgo}.

A parallelotope is called {\em free} or {\em nonfree} depending upon
whether or not it has free vectors; the first known nonfree parallelotope
is $P_V(\Esix^*)$
\cite{engelE6s} and it was later proved that the parallelotopes
$P_V(\mathsf{D}_{2m}^+)$ are also nonfree for $m\geq 4$ \cite{free}.
A parallelotope $P$ is called {\em zone-open} if it can be represented as
a Minkowski sum $P'+z(v)$, where $P'$ is a parallelotope and $v$ is a non-zero vector;
otherwise the parallelotope is called {\em zone-closed}.
Any parallelotope can be uniquely expressed as Minkowski sum of a
zone-closed parallelotope and a zonotope.

A parallelotope $P$ is called {\em finitely free} if there exists a
finite set ${\mathcal F}(P)$ such that any free vector $v$ is collinear to a vector in ${\mathcal F}(P)$.
The parallelotopes that we consider in this paper are finitely free
Voronoi polytopes of highly symmetric lattices. But, for example, 
any vector $v$ is free for the cube $[-1/2, 1/2]^n$, which is 
the Voronoi polytope for the lattice $\ZZ^n$, because the cube has no $6$-belts.

For a given zone-closed parallelotope $P$, we want to find the vector
systems $U$ such that $P + Z(U)$ is still a parallelotope.
Of course, every vector in $U$ has to be free. Another condition arising
from the theory of matroid \cite{BZ,DaG,Gr} is that $U$ has to be {\em unimodular}, i.e. in any basic subset $B\subseteq U$, all vectors of $U$ have integer coordinates in the basis $B$.

The condition that elements of $U$ are free for $P$, and that $U$ is unimodular are 
certainly necessary for $P + Z(U)$ to be a parallelotope. But in general they are not
sufficient, as shown below in the discussion of the Voronoi polytope $P_V(\Esix)$
of the root lattice $\Esix$. If we want to check Venkov conditions, then we have
to determine the faces and the $k$-belts of the sum $P + Z(U)$.

If $G$ is a face of $P$, then we decompose $U$ into
$U_1(G)\cup U_2(G)\cup U_3(G)$.
Let $x\in G$ be an inner point of $G$ and let $\lambda$ be a sufficiently small number. Then one
of the following three cases are true.
\begin{itemize}
\item $u\in U_1(G)$ if $x\pm\lambda u\notin G$ and one of the points $x\pm \lambda u$ is in $P$.
\item $u\in U_2(G)$ if $x\pm \lambda u\in \Lin(G)$.
\item $u\in U_3(G)$ if $x\pm \lambda u\notin G$ and $x\pm\lambda u\notin P$.
\end{itemize}
The vectors in $U_1(G)$ translate $G$ by some vector
$w=\sum_{u\in U_1(G)} \pm u$. The vectors $u\in U_2(G)$ belong
to the vector space $\Lin(G)$ defined by $G$ and extend $G$ to a larger face of the same dimension.
The vectors $u\in U_3(G)$ are {\em strongly transversal} to $G$, that
is $\dim (G+z(u)) = 1 + \dim G$ and $G+z(u)$ is a new face of $P$.

Hence, all faces of $P + Z(U)$ are translated or extended facets of $P$.
So, if one computes all facets and $(n-2)$-dimensional faces of $P + Z(U)$,
then one can determine if it is a parallelotope.
The method is explained in Section \ref{SectionEnumAlgo} and applied
in Section \ref{SectionE6zonotope} (with modifications) to the root lattice $\Esix$ 
and in Section \ref{SectionOtherLattice} to other lattices.

The Voronoi polytope $P_V(\Esix)$ of the root lattice $\Esix$
has only $6$-belts. The edges of $P_V(\Esix)$ are of two types:
$r$-edges, which are parallel and equal by norm to minimal vectors of the
lattice $\Esix$, and $m$-edges, which are parallel and equal by norm
to minimal vectors of the dual lattice $\Esix^*$. The set $\mathcal R$
of minimal vectors of $\Esix$ is the root system $\Esix$.
Roots of $\mathcal R$ are also facet vectors of $P_V(\Esix)$.
One can choose a set ${\mathcal M}$ of $27$ vectors of $\Esix^*$ such
that the pairwise scalar products $q^Tq'$ of $q,q'\in\mathcal M$
take only two values $-\frac{2}{3}$ and $\frac{1}{3}$ and the set of minimal
vectors of  $\Esix^*$ is ${\mathcal M}\cup -{\mathcal M}$.

For a lattice $L$ we define an {\em empty sphere} to be a ball containing
no lattice point in its interior. The convex hull of the lattice points
on the surface of the ball is called a {\em Delaunay polytope}.
The Delaunay polytopes define a tessellation, which is dual
to the one by Voronoi polytopes $P_V(L)$, in particular the vertices of
$P_V(L)$ are circumcenters of Delaunay polytopes (see, for example, \cite{schurmannBook}).
It turns out that the Delaunay polytopes of $\Esix$ are Sch\"afli
polytope $P_{Schl}$ and its antipodal $P^*_{Schl}$.
The configuration of vectors ${\mathcal M}$ is the vertex set of $P_{Schl}$.

We prove that the set of free vectors of $\Esix$ is exactly $\mathcal M$. Then 
we consider subsets $U\subseteq\mathcal M$
and give a necessary and sufficient condition for $P_V(\Esix)+Z(U)$
to be a parallelotope.
Namely, the subset $U$ should
not contain a subset of five vectors $u_i\in\mathcal M$,
$1\le i\le 5$, such that $u_i^Tu_j=\frac{1}{3}$ for $1\le i<j\le 5$.
We find all ten maximal by inclusion feasible subsets
$U\subseteq\mathcal M$ by a computer search. We give a detailed description
of these subsets and regular matroids, represented by these subsets. Finally, we consider how the implemented
methods can be extended to other lattices.

We use Coxeter's notations $\alpha_n$ and $\beta_n$ for regular $n$-dimensional simplices and cross-polytopes respectively.

In Section \ref{sec:gen_results} general results about free vectors
are proved.
In Section \ref{sec:standard_vector} the standard vectors of parallelotopes
$P+z(v)$ are determined.
In Section \ref{SectionEnumAlgo} enumeration algorithms for free-vectors are given.
In Sections \ref{SectionRootlattice} the lattice $\mathsf{E}_6$ is built.
In Sections \ref{SectionSchlafli}, \ref{SectionVoronoi} and \ref{SectionFreedom} the Delaunay polytope, Voronoi polytope and free vectors of $\mathsf{E}_6$ are determined.
In Section \ref{SectionE6zonotope}   the criterion for $P_v(\mathsf{E}_6) + Z(U)$ to be a parallelotope is given.
In Sections \ref{SectionEnumeration} and \ref{SectionOtherLattice} the enumeration of maximal feasible subsets is done for $\mathsf{E}_6$ and some other lattices.

\section{General results}\label{sec:gen_results}

\begin{figure}
\begin{center}
\begin{minipage}{5.7cm}
\centering
\epsfig{file=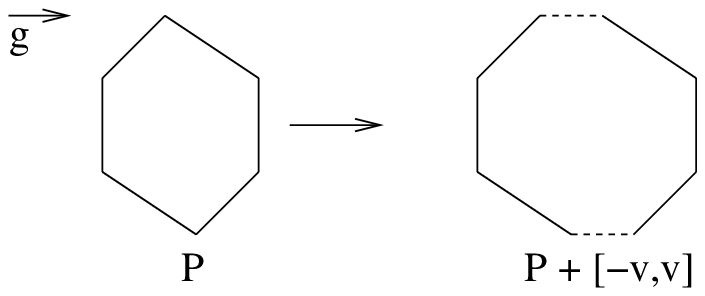,height=25mm}\par
(i) $v$ is not free
\end{minipage}
\begin{minipage}{5.7cm}
\centering
\epsfig{file=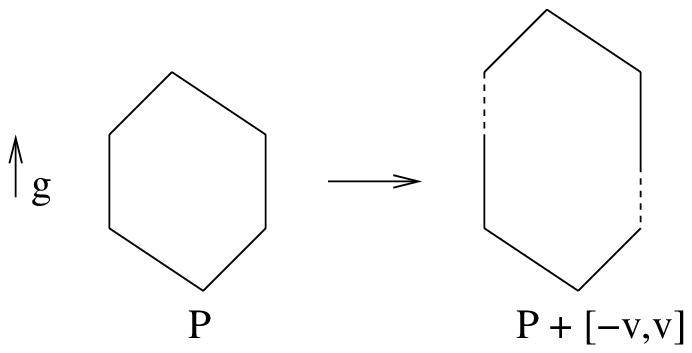,height=25mm}\par
(ii) $v$ is free
\end{minipage}
\end{center}
\caption{The image of a $6$-belt of $P$ in $P + [-v,v]$}
\label{SumOnSixBelt}
\end{figure}

Recall that a vector $x$ is called a facet vector of a parallelotope $P$ if it connects the center of $P$ with the center of a parallelotope
$P+x$ such that $P+x$ and $P$ share a facet.
If $P$ is a Voronoi parallelotope, then $x$ is orthogonal to the facet $P \cap (P+x)$.

In \cite{Do} Dolbilin introduced the notion of standard faces of parallelotopes. Namely, if $P$ and $P+x$ are parallelotopes of the face-to-face tiling
by translates of $P$, and if $F = P\cap (P+x)\neq \varnothing$,
then $F$ is called a {\it standard face} of $P$ and $x$ is called the {\it standard vector} of $F$. If $P$ is centered at the origin, then
$F$ has a center of symmetry at $x/2$. In what follows the definition of standard faces will be used extensively. 

The following Theorem gives a characterization of free vectors
that can be used for a computer enumeration of all free vectors for a given parallelotope $P$, or 
in a case by case analysis as shown below in our discussion of $\Esix$.

\begin{theor}(\cite{Gr})\label{bfr}
Let $P$ be a parallelotope and $v$ be a vector.
The following assertions are equivalent:
\begin{enumerate}
\item[(i)] $P$ is free along $v$ (i.e. $P+z(v)$ is a parallelotope);
\item[(ii)] $v^Tp=0$ for at least one facet normal $p$ of each $6$-belt of $P$; equivalently, $p$ is parallel to at least one facet of each 6-belt.
\end{enumerate}
\end{theor}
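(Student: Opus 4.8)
The plan is to prove the two implications separately, working with the Venkov characterization of parallelotopes and the structural description of faces of $P + Z(U)$ given in the introduction (specialized here to $U = \{v\}$, i.e. $Z(U) = z(v)$).

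First I would establish $(i) \Rightarrow (ii)$ by contraposition. Suppose there is a $6$-belt $F_1, \dots, F_6$ of $P$ with facet normals $p_1, \dots, p_6$ such that $v^T p_i \neq 0$ for every $i$; equivalently $z(v)$ is not parallel to any of the six facets. I want to show $P + z(v)$ fails the Venkov conditions. The key observation is that a $6$-belt of $P$ is cut out by the $(n-2)$-dimensional face $G = \bigcap_i \Lin(F_i)$ (more precisely the belt corresponds to a codimension-$2$ face $G$ and the six facets containing faces parallel to $G$), and the local picture around $G$ is a planar hexagon in the $2$-plane $G^\perp$ (or the quotient $\RR^n/\Lin(G)$). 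Under the hypothesis, $v$ has nonzero image in this quotient plane, so in $P + z(v)$ the hexagon is replaced by the Minkowski sum of a hexagon with a segment not parallel to any of its edges, which is an octagon — an $8$-belt. Since a parallelotope admits only $4$- and $6$-belts (Venkov), $P + z(v)$ cannot be a parallelotope. I should be careful that $G$ really does remain a face of $P + z(v)$ of the right dimension, or else argue directly with the belt images as suggested by Figure~\ref{SumOnSixBelt}; the figure already indicates exactly this octagon-versus-hexagon dichotomy.

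For $(ii) \Rightarrow (i)$ I would use the Venkov conditions together with the decomposition $U_1(G) \cup U_2(G) \cup U_3(G)$ from the introduction applied to $Z(U) = z(v)$. Central symmetry of $P + z(v)$ is immediate since both summands are centrally symmetric. For central symmetry of facets: every facet of $P + z(v)$ is, by the face analysis, either a translate $F + w$ of a facet $F$ of $P$ (when $v$ is transversal-via-translation), or an extension $F + z(v)$ of a facet $F$ of $P$ (when $v \in \Lin(F)$), or a "new" facet $G' + z(v)$ where $G'$ is an $(n-2)$-face of $P$ strongly transversal to $v$; in each case it is a Minkowski sum of centrally symmetric polytopes, hence centrally symmetric. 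The main work is the belt condition. Take any belt of $P + z(v)$; it is the image of a belt of $P$ or a "new" belt created around an $(n-3)$-face. A $4$-belt can only give a $4$-belt or a $6$-belt after adding one segment (a quadrilateral plus a segment is a quadrilateral or a hexagon), which is fine. A $6$-belt $F_1, \dots, F_6$ with normals $p_i$: by hypothesis $v^T p_j = 0$ for some $j$, i.e. $z(v)$ is parallel to $F_j$, so locally in the hexagon plane $v$ is parallel to one pair of opposite edges; adding such a segment to a hexagon yields again a hexagon (two opposite edges get elongated), i.e. a $6$-belt. New belts created by $v$ being strongly transversal to an $(n-3)$-face are necessarily $4$-belts (the local picture is a segment times a $2$-face, giving a prism whose relevant belt is a $4$-belt). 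Hence all belts of $P + z(v)$ are $4$- or $6$-belts, and by Venkov $P + z(v)$ is a parallelotope.

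The main obstacle I anticipate is making the "local picture at an $(n-2)$-face is a hexagon/quadrilateral in a $2$-plane, and Minkowski-summing with $z(v)$ acts on that plane by summing with the projected segment" argument fully rigorous — in particular verifying that belts of $P + z(v)$ are exactly the images of belts of $P$ together with the new $4$-belts, with no belts accidentally merging or disappearing, and that projecting a $6$-belt's facet normals to $G^\perp$ faithfully detects parallelism. I would handle this by passing to the quotient space $\RR^n / \Lin(G)$ where $P$ projects to a hexagon $H$ and $z(v)$ to a segment $s$; then $P + z(v)$ projects to $H + s$, and the combinatorial type of the belt is read off from $H + s$, with the case analysis ($s$ parallel to an edge of $H$ versus not) giving precisely the hexagon-versus-octagon alternative. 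This reduction, already drawn in Figure~\ref{SumOnSixBelt}, is the crux; the rest is bookkeeping with the $U_i(G)$ decomposition.
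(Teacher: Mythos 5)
Your implication $(i)\Rightarrow(ii)$ and your verification of the first two Venkov conditions for $P+z(v)$ match the paper's argument. The gap is in your treatment of the third Venkov condition, specifically the ``new'' $(n-2)$-faces of $P+z(v)$ of the form $G+z(v)$ with $G$ an $(n-3)$-face of $P$ to which $v$ is strongly transversal. You dispose of these in one sentence by claiming the resulting belts are ``necessarily $4$-belts (the local picture is a segment times a $2$-face, giving a prism).'' This is false as stated: the local structure of the tiling around an $(n-3)$-face is a $3$-dimensional fan whose dual cell is one of the five Delaunay types (tetrahedron, octahedron, quadrangular pyramid, triangular prism, cube), and when the segment splits, e.g., a quadrangular-pyramid dual cell into two tetrahedra, the new face $G+z(v)$ is surrounded by \emph{three} tiles and generates a $6$-belt, not a $4$-belt. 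More importantly, even granting that the count comes out to $3$ or $4$, the substantive issue you never address is whether the translates of $P+z(v)$ actually fit together facet-to-facet around $G+z(v)$ at all --- i.e., the local tiling property. A priori the modified tiles could fail to match, producing a non-face intersection rather than any belt.

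This is precisely the point at which the original argument of \cite{Gr} was incomplete, as the paper notes in the introduction. The paper closes it by: (a) observing that condition (ii) forces $\pi_G(z(v))$ to be parallel to at least one $2$-cone at every trivalent ray of $\Fan(G)$ (because the $2$-cones at a trivalent ray correspond to facets of a common $6$-belt); (b) enumerating all $13$ combinatorial types of feasible pairs $(\Fan, z)$ over the five Delaunay types of $(n-3)$-face; and (c) verifying case by case in Appendix A that in each feasible configuration the copies of $P+z(v)$ do match around $G+z(v)$, by exhibiting an explicit splitting of the dual cell $D(G)$. Your proposal contains no substitute for steps (a)--(c), and your stated fallback (passing to the quotient by $\Lin(G)$ for $(n-2)$-faces $G$) only handles the inherited belts, not the newly created ones. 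Without this analysis the implication $(ii)\Rightarrow(i)$ is not established.
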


\proof If $v$ does not satisfy (ii) then Figure \ref{SumOnSixBelt}.(i)
shows that there is a $8$-belt in $P + z(v)$. So, (ii) is necessary.

Assume that (ii) is true. We start by showing that the first two Venkov conditions must then hold for $P+z(v)$. Since $P$ and $z(v)$ are centrally symmetric, 
the Minkowski sum $P+z(v)$ is centrally symmetric as well. If $F$ is a facet of $P + z(v)$ that is a translate of a facet of $P$, then it is necessarily centrally symmetric. If the facet $F$ is not such a translate, then it has the form $F = G + z(v)$, where $G$ is either a $(n-1)$ or $(n-2)$-face of $P$.
If $G$ is a facet of $P$, then it is centrally symmetric, and therefore the sum $G + z(v)$ is also symmetric. If $G$ is a $n-2$-face, 
then condition (ii) implies that $G$ belongs to a 4-belt of $P$ and is symmetric. For this reason $G + z(v)$ is also symmetric.

We now show that $P+z(v)$ has only $4$- and $6$-belts. We establish this result using a local argument --- we show that each $(n-2)$-face $G$ of 
the polytope $P+z(v)$ belongs to either 3 or 4 distinct translates of $P+z(v)$ that fit facet-to-facet around $G$. From this {\it local tiling property} it 
immediately follows that all belts of $P+z(v)$ have length 4 or 6. 

We need to consider the three possibilities for a $(n-2)$-face, $G_v$, of $P+z(v)$:
\begin{enumerate}
\item $G_v = G \pm v$ (this notation means that $F$ is a translate of $G$ by $v$ or $-v$), where $G$ is a $(n-2)$-face of $P$;
\item $G_v = G + z(v)$, where $G$ is a $(n-2)$-face of $P$;
\item $G_v = G + z(v)$, where $G$ is a $(n-3)$-face of $P$ (and the sum is direct);
\end{enumerate}
By condition (ii), the local tiling property for $G_v$ follows from that for $G$ in cases (1) and (2). So we need only consider case (3).

Consider the face-to-face tiling by parallel copies of $P$. Let $G$ be an $(n-3)$-face of the tiling and let $\pi_G$ be a projection along the linear space $\Lin(G)$ onto the complementary space $(\Lin(G))^{\perp}$. The projections of the tiles sharing $G$ split a sufficiently small neighborhood of $\pi_G(G)$ in the
same way as some $3$-dimensional complete polyhedral fan $\Fan (G)$ does. The {\it incidence type} of the face $G$ is a combinatorial type of $\Fan (G)$.

It is convenient to label the incidence type of a face (and therefore the combinatorial type of its fan) by its dual cell. 
By definition~\cite{Or}, the {\it dual cell} $D(G)$ of a face $G$ is the convex hull of centers of all parallelotopes in the tiling that have the face $G$.

The five possible combinatorial types of $\Fan (G)$, where $G$ is a $(n-3)$-face, were classified by B.N. Delaunay in \cite{De}. 
We label these types by their corresponding dual cells: (a) tetrahedron, (b) octahedron, (c) pyramid with quadrangular base, (d) prism with triangular base 
and (e) cube. In the cases (b) and (e) the face $G$ is standard. 

Since the face $G$ is $(n-3)$-dimensional, and $G+z(v)$ is an $(n-2)$-dimensional face of $P+z(v)$, the segment $\pi_G(z(v))$ is non-degenerate.
Further, if a $1$-dimensional face (ray) of $\Fan(G)$ is {\it trivalent}, i.e. incident to exactly 3 two-dimensional cones, then these two-dimensional cones
correspond to facets of the same $6$-belt. Thus for every trivalent ray of $\Fan(G)$ the segment $\pi_G(z(v))$ is parallel to at least one two-dimensional face
containing that ray.

We call the segment $\pi_G(z(v))$ {\it feasible} if for every trivalent ray of $\Fan(G)$ one of the two-dimensional cones incident to this ray is parallel to
$\pi_G(z(v))$. Respectively, we will call the face $G+z(v)$ a {\it feasible extension} of the face $G$.

We consider each of the five types of local structure for the $(n-3)$-faces $G$ of $P$, and determine how this structure can change when the individual tiles are replaced by their sum with a feasible $v$. Remarkably, in all cases these modified tiles fit back together as they initially did.

We enumerate all combinatorially distinct pairs of $(\Fan,z)$ with $\Fan$ a $3$-dimensional fan and $z$ a segment with the following properties.
\begin{enumerate}
\item  $\Fan$ is combinatorially equivalent to a fan of some $(n-3)$-face in some tiling of $\mathbb R^n$ by parallelohedra. 
\item  $z$ is feasible for $\Fan$.
\end{enumerate}
The way we do the enumeration is as follows: given a 3-dimensional fan $\Fan$ and a subset of its 2-dimensional faces, we can determine, if
the subset covers all trivalent rays of $\Fan$, and if there is a segment $z$ parallel to every 2-dimensional face from the chosen subset.
If the answer is positive for both instances, $z$ is feasible for $\Fan$. Of course, every possible pair $(\Fan, z)$ will be considered through
this enumeration.

As a result, we obtain a total of 13 types of pairs $(\Fan, z)$.
All these cases are listed and described explicitly in Figures~\ref{segm1} and~\ref{segm2}. In fact, each of the cases d.1), e.1) and e.2) covers an infinite 
series of segments for a fixed fan $\Fan$, however the treatment is the same within each single case.

\begin{figure}[!t]
\centerline{
\begin{minipage}{0.49\textwidth}
\centerline{\includegraphics[height=4.5cm]{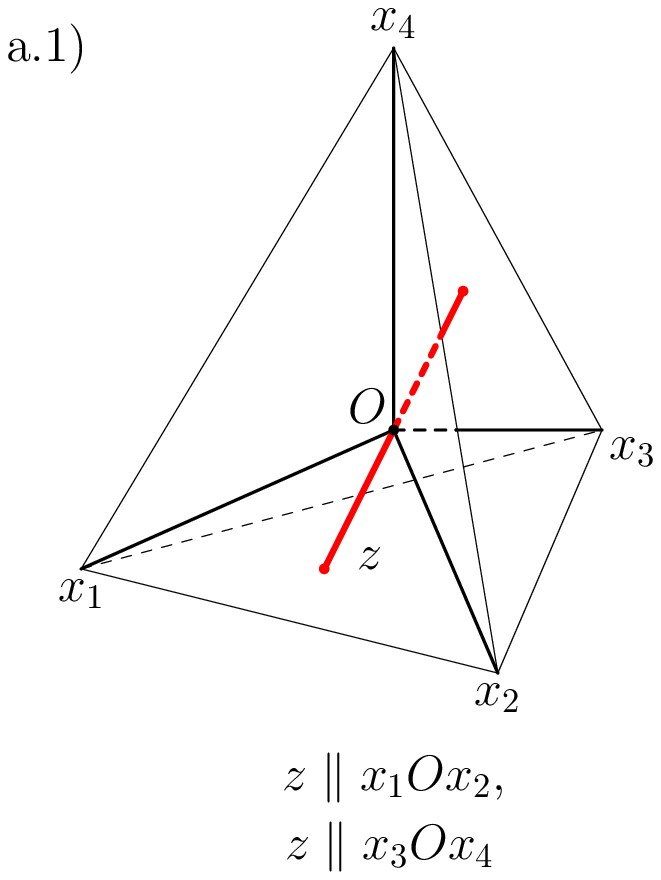}}
\end{minipage}
\begin{minipage}{0.49\textwidth}
\centerline{\includegraphics[height=4.5cm]{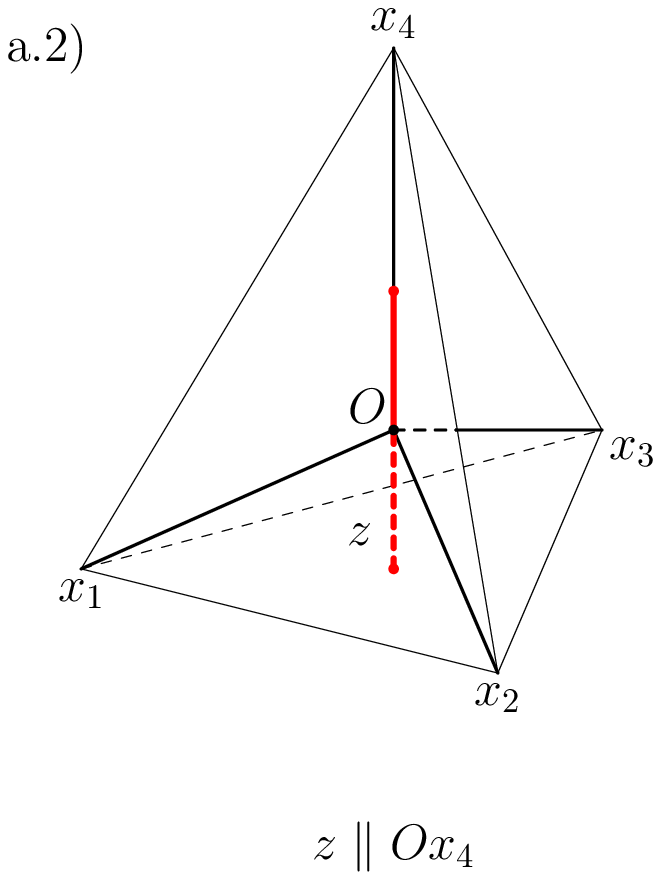}}
\end{minipage}
}

\vspace{5mm}
\centerline{
\begin{minipage}{0.49\textwidth}
\centerline{\includegraphics[height=4.5cm]{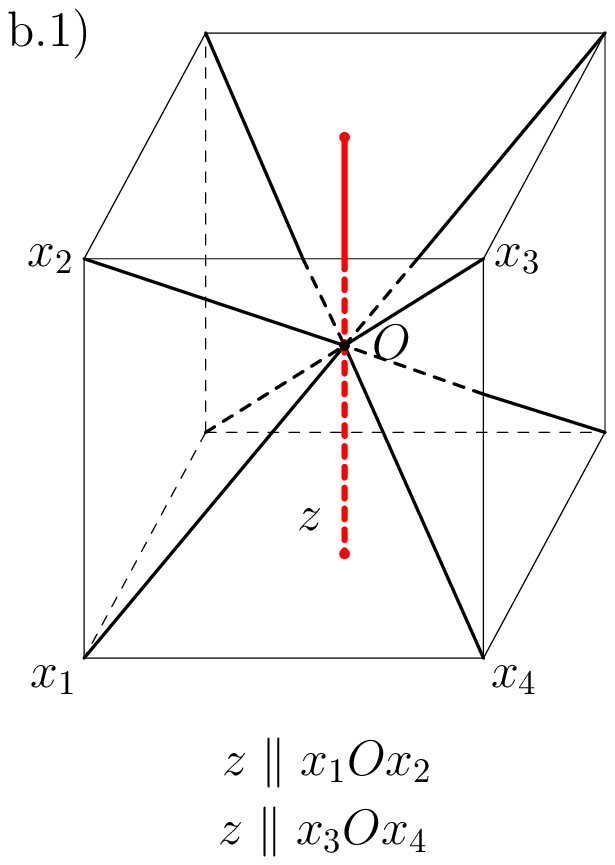}}
\end{minipage}
\begin{minipage}{0.49\textwidth}
\centerline{\includegraphics[height=4.5cm]{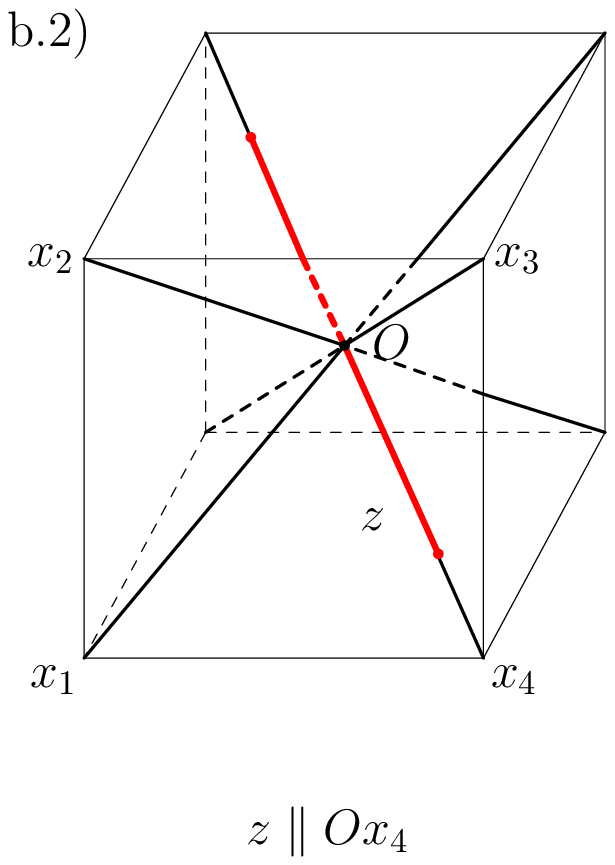}}
\end{minipage}
}

\vspace{5mm}
\centerline{
\begin{minipage}{0.32\textwidth}
\centerline{\includegraphics[height=4.5cm]{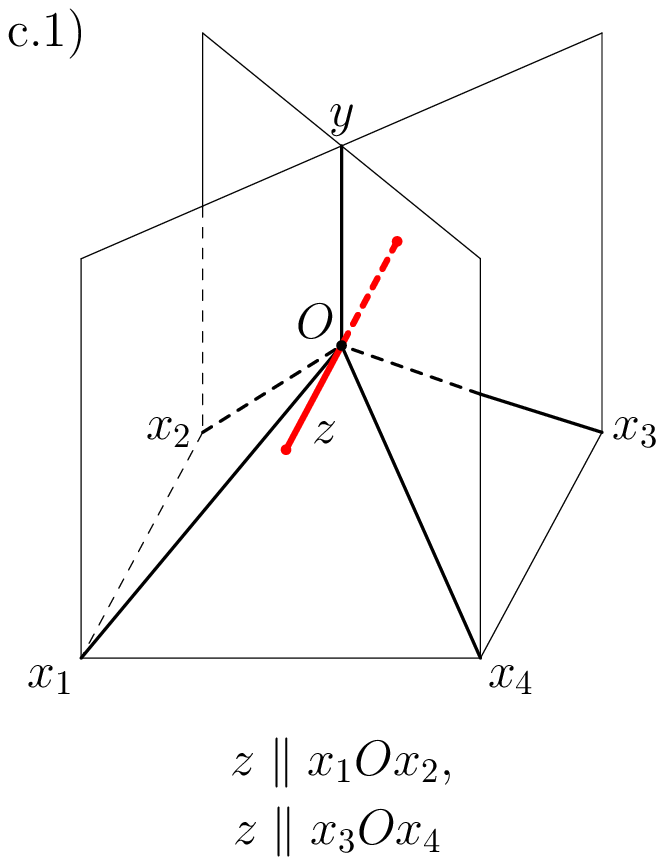}}
\end{minipage} \qquad
\begin{minipage}{0.32\textwidth}
\centerline{\includegraphics[height=4.5cm]{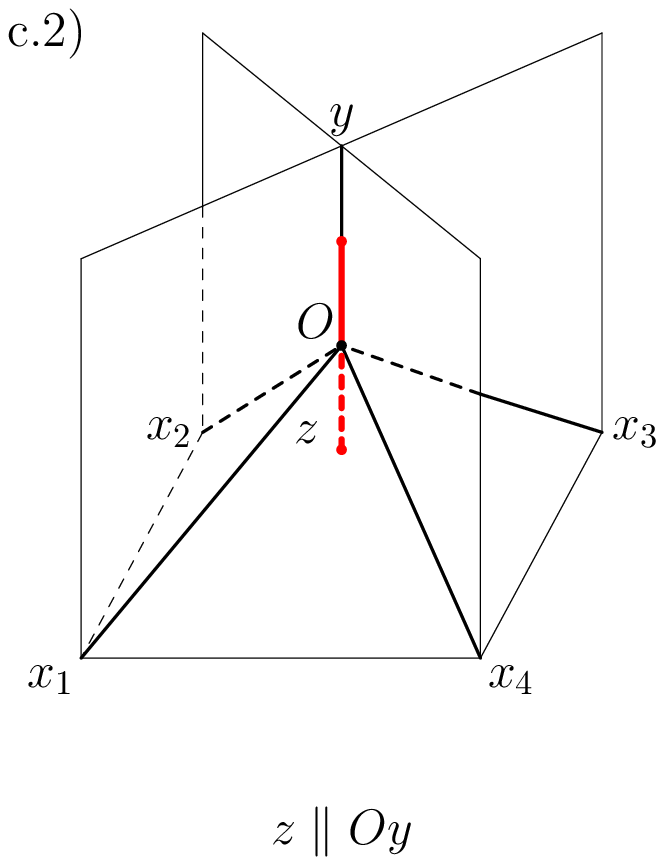}}
\end{minipage}
\begin{minipage}{0.32\textwidth}
\centerline{\includegraphics[height=4.5cm]{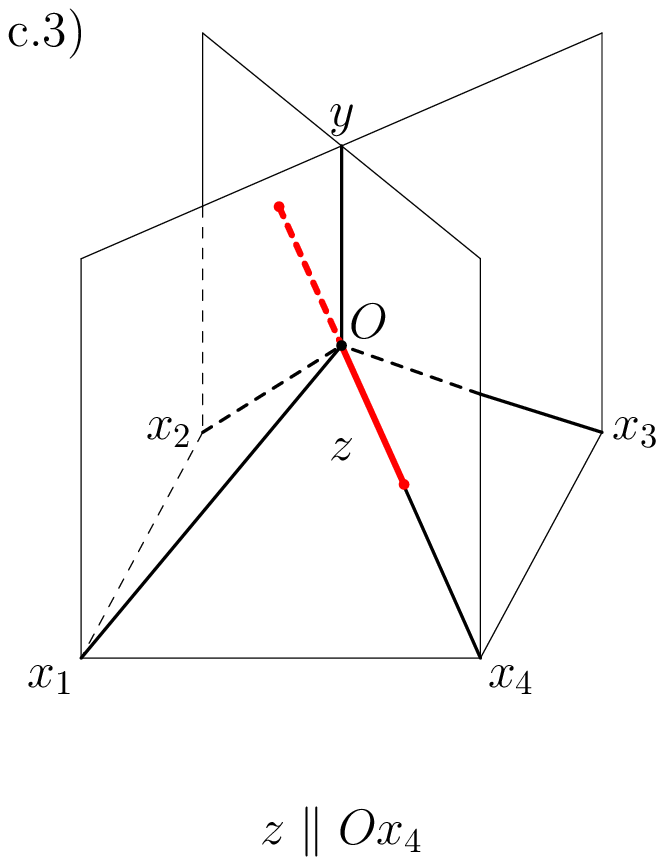}}
\end{minipage}
}

\vspace{3mm}
\caption{Possible arrangements of free segments and $(n-3)$-faces}
\label{segm1}
\end{figure}

\begin{figure}[!t]
\centerline{
\begin{minipage}{0.32\textwidth}
\centerline{\includegraphics[height=4.5cm]{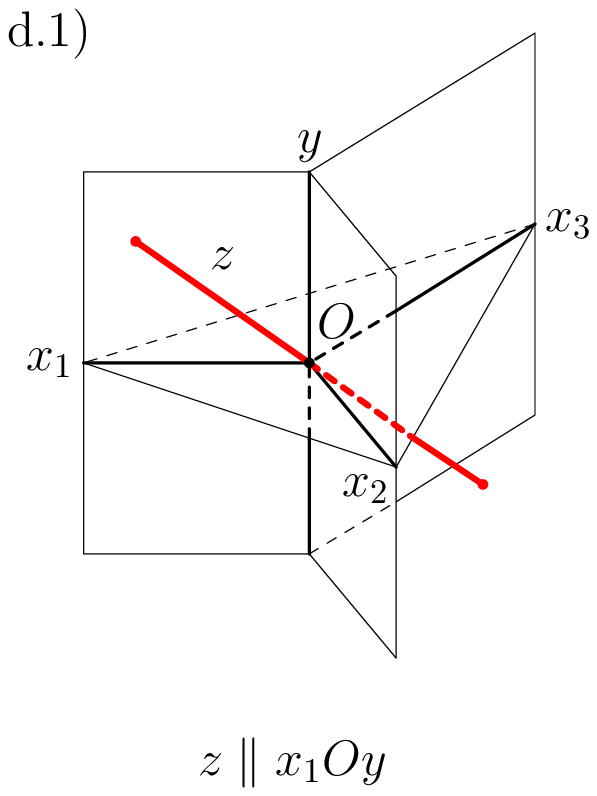}}
\end{minipage} \qquad
\begin{minipage}{0.32\textwidth}
\centerline{\includegraphics[height=4.5cm]{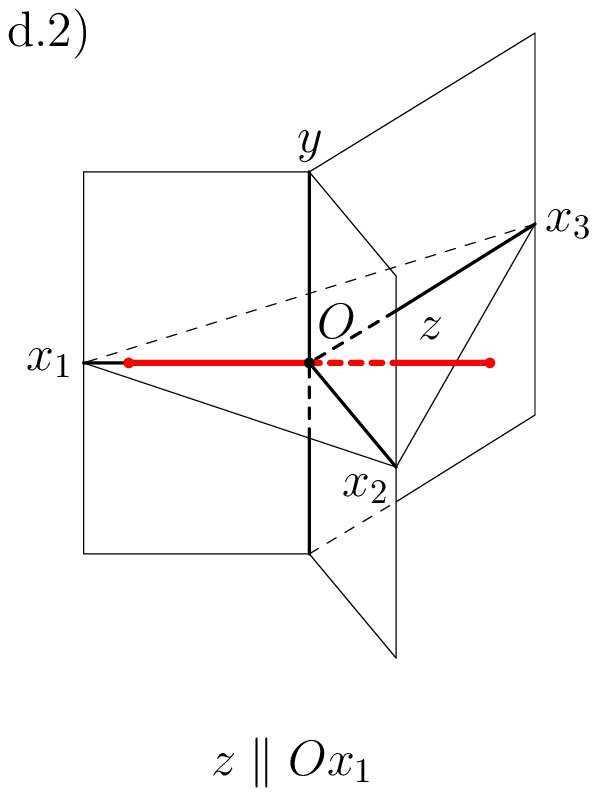}}
\end{minipage}
\begin{minipage}{0.32\textwidth}
\centerline{\includegraphics[height=4.5cm]{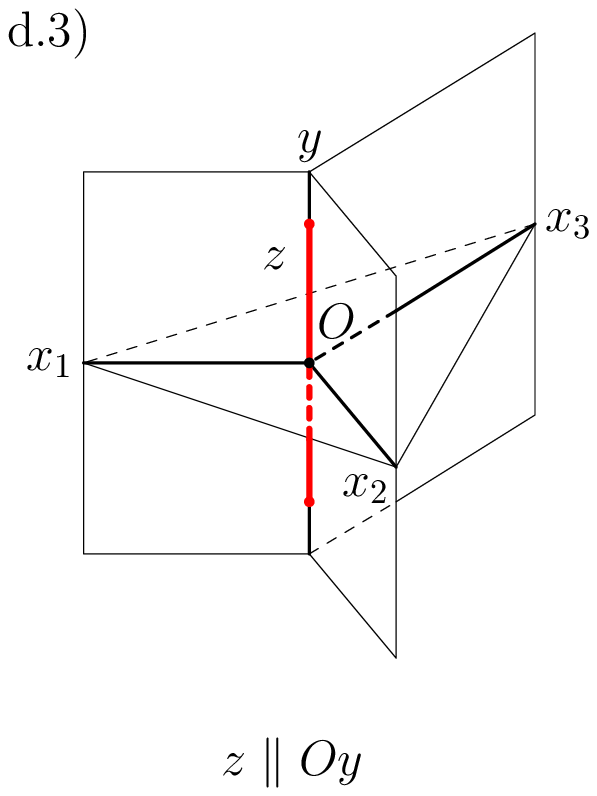}}
\end{minipage}
}

\vspace{5mm}
\centerline{
\begin{minipage}{0.32\textwidth}
\centerline{\includegraphics[height=4.5cm]{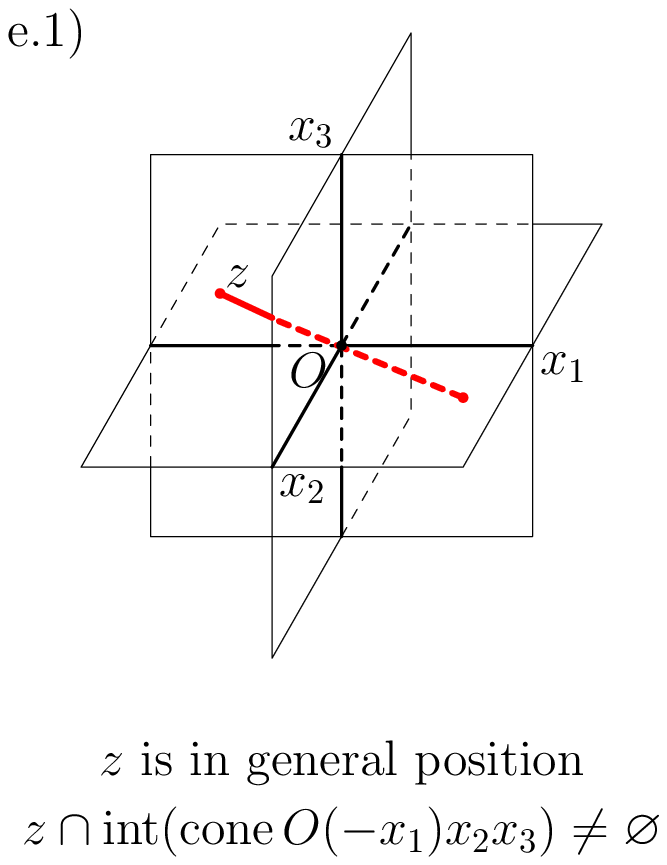}}
\end{minipage} \qquad
\begin{minipage}{0.32\textwidth}
\centerline{\includegraphics[height=4.5cm]{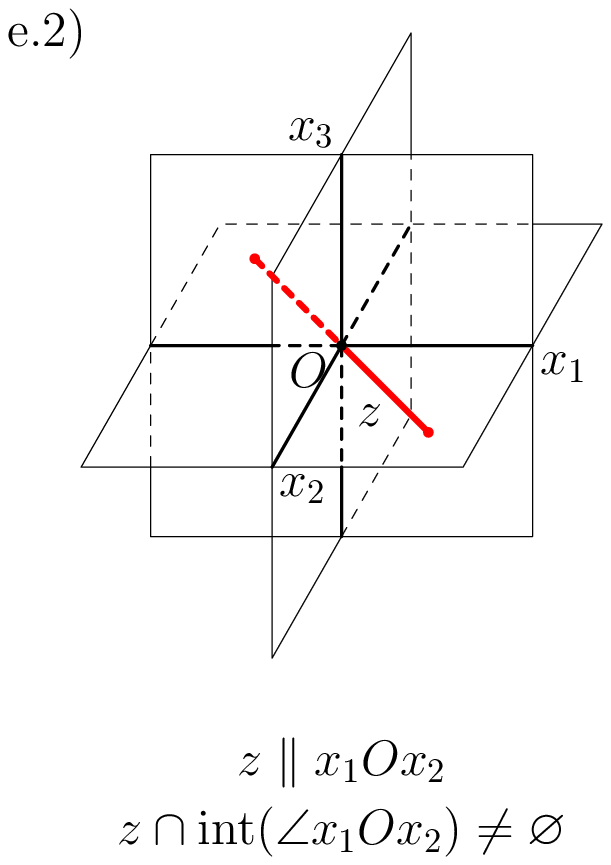}}
\end{minipage}
\begin{minipage}{0.32\textwidth}
\centerline{\includegraphics[height=4.5cm]{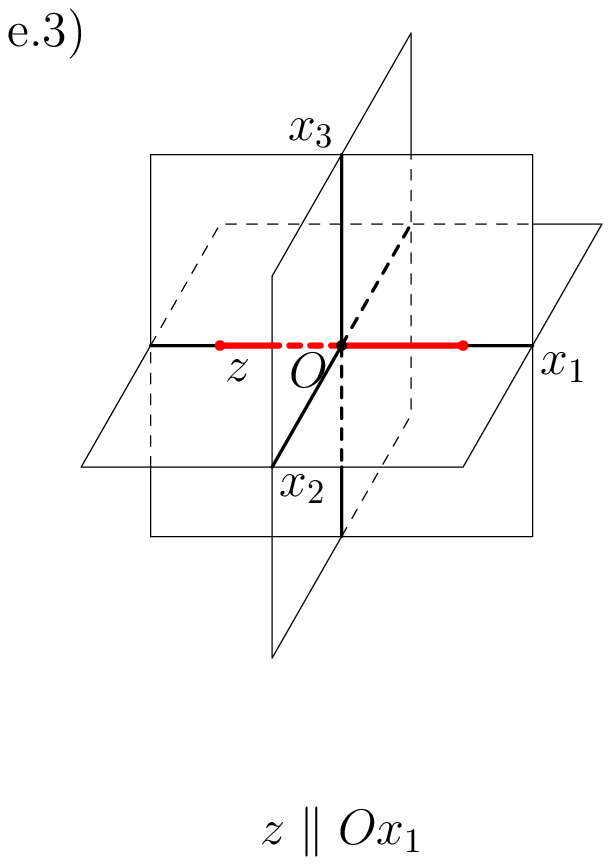}}
\end{minipage}
\vspace{3mm}
}
\caption{Possible arrangements of free segments and $(n-3)$-faces, continued}
\label{segm2}
\end{figure}

We also emphasize that each of the 13 cases has its own subcases, depending on which 3-dimensional cone of $\Fan$ corresponds to $P$. Still in each case
it is easy enough to treat all subcases simultaneously.

In each subcase one can prove that either there is no feasible extension of the face $G$, or the copies of $P+z(v)$ fit together around $G+z(v)$. The details
are provided in Appendix A.  

Therefore the third Venkov condition for $P+z(v)$ is fulfilled. Thus $P+z(v)$ is a parallelotope. \qed

In \cite{GrSegment} a graph $\Gamma(P)$ is built as follows for every parallelotope $P$. The vertices of $\Gamma(P)$ represent facets and
two vertices are said to be adjacent if there exists a $6$-belt containing the corresponding facets. If one contracts all pairs of vertices
of $\Gamma(P)$ corresponding to pairs of antipodal facets, the result will be the so called {\it red Venkov graph} $\Gamma_{RV}(P)$.
$\Gamma_{RV}(P)$ is connected iff $\Gamma(P)$ is connected.

$P$ is a direct sum of two non-trivial parallelotopes $P_1$ and $P_2$ if and only if the
red Venkov graph of $P$ is not connected \cite[Proposition 4]{GrSegment} (see also \cite{Or,Ordine2,rybnikov}).
The following is an extension of Theorem \ref{bfr} to Voronoi polytopes:

\begin{theor}\label{EquivForVoronoi}
Let $P$ be a non-decomposable Voronoi parallelotope and $v$ be a vector.
The following assertions are equivalent:
\begin{enumerate}
\item[(i)] $P + z(v)$ is affinely equivalent to a Voronoi polytope.
\item[(ii)] There exists $a>0$ such that for any $6$-belt with facet vectors $\{p_1, \dots, p_6\}$
either $v^T p_i = 0$ for $1\leq i\leq 6$ or $v^T p_i = 0$ for two indices and $v^T p_i = \pm a$ for the other four indices.
\end{enumerate}
\end{theor}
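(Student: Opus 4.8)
The plan is to prove the two implications separately, using Theorem~\ref{bfr} as the backbone. For the direction (i)$\Rightarrow$(ii): suppose $P+z(v)$ is affinely equivalent to a Voronoi polytope, say for some quadratic form $f'$. Since $P+z(v)$ is in particular a parallelotope, Theorem~\ref{bfr} already tells us that $v^Tp_i=0$ for at least one facet vector in each $6$-belt; by central symmetry of $P$ the facet vectors come in antipodal pairs $\{\pm p_i\}$, so in fact $v$ is orthogonal to at least two of the six facet vectors of each $6$-belt. The new content is the quantitative statement that the other four scalar products are all equal in absolute value, with a single constant $a$ working for every $6$-belt. I would extract this from the Voronoi structure: for a Voronoi polytope the facet vectors of a $6$-belt are relevant vectors $s_1,\dots,s_6$ satisfying a linear relation (up to signs, $s_1\pm s_3\pm s_5=0$ and $s_2\pm s_4\pm s_6=0$ after suitable indexing), coming from the fact that a $6$-belt corresponds to a hexagonal dual $2$-cell. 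Then I would look at how the zonotope summand $z(v)$ sits inside the new Voronoi polytope: the passage from $P$ to $P+z(v)$ is governed by the ``dicing'' description of zonotopal contributions, and being a Voronoi cell forces $v$, measured against the relevant vectors, to take a rigid set of values. The cleanest route is probably: $P+z(v)=P_V(L',f')$ means there is a lattice $L'$ whose relevant vectors, in the self-dual picture, include the $p_i$ (rescaled) plus a new relevant vector in the direction of $v$; the Gram-matrix relations among a $6$-belt then pin down $v^Tp_i$ to be $0$ or $\pm a$ with $a$ determined by $v^2$ and the common structure, independent of the belt.

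For the direction (ii)$\Rightarrow$(i) I would argue as follows. Condition (ii) certainly implies condition (ii) of Theorem~\ref{bfr}, so $P+z(v)$ is already a parallelotope; it remains to promote it to a Voronoi polytope. Since $P=P_V(L)$ is non-decomposable, I can work with an explicit Gram/quadratic-form model $P_V(L,f)$ as in the introduction. The idea is to write down a candidate quadratic form $f'$ for $P+z(v)$ by a rank-one modification: roughly $f' = f + \varepsilon\,\ell_v\otimes\ell_v$ (a ``dicing'' or ``$\sqcup$-operation'' perturbation) where $\ell_v$ is the linear functional dual to $v$ and $\varepsilon$ is chosen in terms of $a$, and then check that the Voronoi cell of $f'$ is exactly $P+z(v)$. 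The hypothesis on the scalar products is exactly what is needed for the facet-defining inequalities: for a $6$-belt on which $v$ is orthogonal to all six facet vectors, the hexagon is undisturbed; for a $6$-belt where $v^Tp_i=\pm a$ on four of them, the perturbation shifts those four facet inequalities uniformly so that the hexagonal $2$-cell turns into a hexagon again (not into an octagon or a parallelogram), which is precisely Figure~\ref{SumOnSixBelt}.(ii) seen at the level of quadratic forms. One then invokes that the dicing/zonotopal extension of a Voronoi polytope is again Voronoi \cite{dicing,dicing2}, or directly verifies Venkov's conditions together with the self-dual realizability, to conclude (i).

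The main obstacle I expect is the (i)$\Rightarrow$(ii) direction — specifically, showing that the constant $a$ is \emph{global}, i.e. the same for all $6$-belts, rather than merely that within each individual $6$-belt the four nonzero values agree in absolute value. Within a single belt the equality $|v^Tp_i|=|v^Tp_j|$ should follow from the local hexagonal geometry of the Voronoi cell (the four facets not parallel to $v$ must still meet in a hexagon after the Minkowski sum, which forces equal ``widths''), and the sign pattern must be $(+,+,-,-)$ around the belt rather than $(+,-,+,-)$, since the latter would produce an octagon. Getting a single $a$ across belts is where non-decomposability is essential: the red Venkov graph $\Gamma_{RV}(P)$ is connected, so any two $6$-belts are linked through a chain of $6$-belts sharing facets, and along such a chain the value of $|v^Tp|$ on a shared facet vector is common to both belts; connectivity then propagates a single constant everywhere. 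I would also need to handle the degenerate-looking case where $v$ is orthogonal to \emph{all six} facet vectors of some belt, which is harmless: such belts simply impose no constraint on $a$, and they cannot disconnect the propagation argument because, again by non-decomposability, the belts on which $v$ acts nontrivially together with the ``null'' belts still span a connected subgraph reaching every facet.
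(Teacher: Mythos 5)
First, a point of reference: the paper does not actually prove Theorem~\ref{EquivForVoronoi} --- its entire proof is the citation ``See \cite[Lemma 3]{GrSegment}'' --- so your attempt can only be measured against that external argument and against the machinery the paper builds elsewhere. Your outline gets the routine parts right: Theorem~\ref{bfr} disposes of the parallelotope half of (ii)$\Rightarrow$(i) and yields the orthogonality in (i)$\Rightarrow$(ii), and within a single $6$-belt the three antipodal pairs of facet vectors satisfy a linear relation $q_1+q_2+q_3=0$ (after choosing signs), which by itself forces the pattern ``all six products zero, or exactly one pair zero and the other two pairs equal to $\pm a_{\mathrm{belt}}$''; your appeal to local hexagonal geometry and to the sign pattern is more than is needed for that step. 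The genuine gap is exactly where you locate it --- the globality of $a$ --- and your proposed fix does not close it. Connectivity of $\Gamma_{RV}(P)$ gives a chain of $6$-belts in which consecutive belts share a facet pair, but a shared facet $p$ with $v^Tp=0$ transmits no information about the belt constants; the constant propagates only along chains in which consecutive belts share a facet with $v^Tp\neq 0$, i.e.\ you need the subgraph induced on the \emph{non-null} facets to be connected. You assert this without argument, and non-decomposability alone does not obviously supply it (note that by Theorem~\ref{bfr} \emph{every} belt contains a null facet pair, so null facets are ubiquitous, not exceptional). A telling symptom: your propagation argument uses only that $P+z(v)$ is a parallelotope, never that it is affinely Voronoi, so if it worked it would show that condition (ii) already follows from $P+z(v)$ being a parallelotope --- which would collapse the distinction the paper draws in Section~\ref{SectionEnumAlgo} between testing for a parallelotope and testing for a Voronoi polytope.

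The route that actually produces a single global $a$ is the one supplied by Section~\ref{sec:standard_vector}, which you do not invoke. Lemma~\ref{lem:operator} furnishes a linear functional $n_v(x)=e_v^Tx$ on $\Lambda(P)$, integer-valued and surjective onto $\ZZ$, with $e_v$ orthogonal to the hyperplane spanned by the facet vectors of the facets of $P+z(v)$ parallel to $v$. Using that $P$ is Voronoi (so facet vectors are orthogonal to their facets), and the Voronoi hypothesis on $P+z(v)$ to handle the facets arising from $(n-2)$-faces, one identifies that hyperplane with $v^{\perp}$, whence $e_v$ is proportional to $v$ and $v^Tp=a\,n_v(p)$ for a single constant $a$ and \emph{all} $p\in\Lambda(P)$ simultaneously; Lemma~\ref{lem:st_vec_hered} then pins $n_v(p)\in\{0,\pm1\}$ for every facet vector $p$, giving $v^Tp\in\{0,\pm a\}$ globally in one stroke, with no belt-by-belt propagation. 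For (ii)$\Rightarrow$(i), your rank-one perturbation $f'=f+\varepsilon\,\ell_v\otimes\ell_v$ is the right kind of object (it is the standard zonotopal-summand operation on the dual form), but the one thing that matters --- that the Voronoi cell of the perturbed form is exactly $P+z(v)$, for which the hypothesis $v^Tp_i\in\{0,\pm a\}$ with a \emph{uniform} $a$ is precisely what makes the facet inequalities come out right --- is left entirely unverified. As written, both directions are plans rather than proofs, and the plan for the crucial step of (i)$\Rightarrow$(ii) would not succeed as stated.
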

\proof See \cite[Lemma 3]{GrSegment}. \qed

\begin{cor}
Let $P$ be a non-decomposable parallelotope and assume that Voronoi conjecture is true. Then $P$ is finitely free.
\end{cor}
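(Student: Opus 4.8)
The plan is to combine Theorem~\ref{EquivForVoronoi} with a finiteness argument on the possible ``profiles'' a free vector can have along the $6$-belts of $P$. Since $P$ is a fixed non-decomposable parallelotope, by the Voronoi conjecture $P$ is affinely equivalent to a Voronoi polytope, so we may as well assume $P = P_V(L)$ for some lattice $L$; this puts us in the setting of Theorem~\ref{EquivForVoronoi}. Let $B_1, \dots, B_m$ be the (finitely many) $6$-belts of $P$, and for each belt $B_j$ fix its six facet vectors $\{p_1^{(j)}, \dots, p_6^{(j)}\}$. These vectors are fixed data of $P$, independent of $v$.

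Next I would record the structural constraint on a free vector. If $v$ is free for $P$, then $P + z(v)$ is again a parallelotope; since $P$ is non-decomposable one checks (using the red Venkov graph $\Gamma_{RV}$ and \cite[Proposition 4]{GrSegment}) that $P + z(v)$ is either non-decomposable or decomposes only by splitting off the segment $z(v)$ itself, so after removing that zone we land on a non-decomposable Voronoi polytope and Theorem~\ref{EquivForVoronoi} applies. Hence there is $a = a(v) > 0$ such that on every $6$-belt $B_j$ the vector $v$ is orthogonal to all six $p_i^{(j)}$ or orthogonal to exactly two of them and satisfies $v^T p_i^{(j)} = \pm a$ on the remaining four. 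Rescaling $v$ (which does not affect collinearity) we may assume $a = 1$. The key observation is then that the orthogonality pattern of $v$ — i.e. the function assigning to each belt $B_j$ either ``all six orthogonal'' or the choice of which two facet vectors $v$ kills — takes only finitely many values, since there are finitely many belts and each belt has only $\binom{6}{2} + 1$ admissible patterns.

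The finiteness now follows: I claim that within each fixed pattern, the direction of $v$ is determined up to finitely many choices. Indeed, the conditions $v^T p_i^{(j)} = 0$ for the selected pairs over all belts form a homogeneous linear system in $v$; its solution space is some subspace $W$. If $\dim W = 0$ the pattern is vacuous. If $\dim W = 1$, the direction of $v$ is forced and $v$ is collinear to a single fixed vector. The real point is that $\dim W \leq 1$ whenever the pattern is realized by an \emph{actual} free vector — equivalently, that a free vector cannot have a $2$-parameter family of companions with the same belt-incidence. This is where I expect the main obstacle to lie, and I would argue it as follows: if $\dim W \geq 2$, pick two linearly independent $v, v' \in W$; by Theorem~\ref{bfr} each is free, and one shows the span $\langle v, v'\rangle$ contains infinitely many pairwise non-collinear free directions, each of which must be orthogonal (on every $6$-belt it meets non-trivially) to one of the \emph{finitely many} facet vectors of that belt — forcing, by a pigeonhole/incidence count on a $2$-plane of directions meeting finitely many hyperplanes, that $W$ actually lies in the orthogonal complement of every facet vector of every $6$-belt, i.e. $v$ is parallel to \emph{all} facets of all $6$-belts. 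But then the segments $z(v)$ split off from $P$ to give a decomposition of $P$ (by the criterion via $\Gamma_{RV}$), contradicting non-decomposability unless $W$ is already accounted for by the single zone being split. In all cases one extracts a finite list: take $\mathcal{F}(P)$ to be the union, over all belts $B_j$ and all admissible patterns with $\dim W = 1$, of a generator of $W$, together with a basis of the (at most one-dimensional) ``universal'' direction parallel to every $6$-belt facet. Since both index sets are finite, $\mathcal{F}(P)$ is finite and every free vector is collinear to one of its elements, so $P$ is finitely free. The delicate step, as noted, is ruling out (or absorbing into the decomposition hypothesis) the higher-dimensional solution spaces $W$; everything else is bookkeeping over the finite belt data.
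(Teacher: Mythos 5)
There is a genuine gap, and it sits exactly where you flagged it. By keeping only the homogeneous orthogonality conditions $v^T p_i^{(j)}=0$ and discarding the conditions $v^T p_i^{(j)}=\pm a$ on the remaining four facet vectors of each belt, you have made your solution space $W$ strictly larger than the set of free vectors realizing the pattern, and the claim $\dim W\le 1$ is then precisely the statement you cannot establish. Your attempted rescue does not work: since every vector of $W$ is free (Theorem~\ref{bfr}), for each $6$-belt $B_j$ you get $W=\bigcup_{i=1}^{6}\bigl(W\cap (p_i^{(j)})^{\perp}\bigr)$, and because a $2$-dimensional real vector space is not a finite union of proper subspaces this yields only that, \emph{for each belt}, \emph{some one} facet vector of that belt is orthogonal to all of $W$. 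It does not follow that $W$ is orthogonal to \emph{every} facet vector of every $6$-belt, so no contradiction with non-decomposability is reached, and the argument stalls. (The preliminary detour about whether $P+z(v)$ decomposes is also unnecessary: Theorem~\ref{EquivForVoronoi} places its hypothesis on $P$, and the Voronoi conjecture applied to the parallelotope $P+z(v)$ directly gives condition (i), hence (ii).)

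The paper's proof avoids the problem by using the inhomogeneous part of condition (ii), which is the information you threw away. Non-decomposability enters only to guarantee that \emph{every} facet vector $w$ of $P$ lies in at least one $6$-belt, whence Theorem~\ref{EquivForVoronoi} gives $w^T v\in\{-a,0,a\}$ for every facet vector $w$. After rescaling so that $a=1$, a free direction is a solution of the affine system $w_i^T v=c_i$ with $c_i\in\{-1,0,1\}$ over a spanning set of facet vectors; for each of the finitely many assignments of the $c_i$ this system has at most one solution because the $w_i$ span $\RR^n$. The finite list ${\mathcal F}(P)$ is then just the set of these solutions, with no case distinction on the dimension of any solution space. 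If you reinstate the $\pm a$ constraints in your ``pattern'' bookkeeping, your argument collapses to this one and the delicate step disappears.
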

\proof Since $P$ is non-decomposable, every facet vector $w$ belongs to at least one $6$-belt.
So, if $v$ is a free vector then we have $w^T v\in \{-a, 0, a\}$.
Since the facet vectors span $\RR^n$ the linear system has a unique solution
for a given choice of signs and zeros. The result follows by remarking that
there is only a finite number of possible choices of zeros of signs. \qed

The connection between Voronoi conjecture and finite freedom is noticed in \cite{Vegh}.

\section{Lattice and standard vectors of $P+z(v)$}\label{sec:standard_vector}

Let $\Lambda(P)$ be the lattice generated by facet vectors of a parallelotope $P$, i.e. the lattice such that
$$\{P+x : x \in \Lambda(P) \} $$
is a face-to-face tiling. Let $v$ be a non-zero free vector of $P$. In this section we will prove several relations
between the lattices $\Lambda(P+z(v))$ and $\Lambda(P)$, as well as between the sets of standard vectors of $P$ and $P+z(v)$.

\begin{lem}\label{lem:operator}
$\Lambda(P+z(v)) = A_v \Lambda(P)$, where $A_v x = x + 2 n_v(x) v$ and $n_v(x)=e_v^T x$ is a linear function from $\Lambda(P)$ to $\ZZ$.
\end{lem}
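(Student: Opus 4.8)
The plan is to prove the stated equality by two inclusions, $A_v\Lambda(P)\subseteq\Lambda(P+z(v))$ and $\Lambda(P+z(v))\subseteq A_v\Lambda(P)$, after first setting up the operator $A_v$. Write $P'=P+z(v)$; since $v$ is free, $P'$ is a parallelotope (Theorem~\ref{bfr}), and $\Lambda(P')$ is the lattice generated by the facet vectors of $P'$, i.e.\ the translation lattice of the face-to-face tiling by $P'$. For the set-up, observe that $\Lambda(P)$ is generated by the facet vectors of $P$, so it is enough to produce a group homomorphism $n_v\colon\Lambda(P)\to\ZZ$ assigning to the facet vector $c(F)$ of a facet $F$ the value $+1$, $0$ or $-1$ according as, when one forms $P'$, the facet $F$ is pushed to $F+v$, extended to $F+z(v)$, or pushed to $F-v$; equivalently $n_v(c(F))=\operatorname{sgn}(v^T\nu_F)$, where $\nu_F$ is the outward normal of $F$ and the middle case is exactly $v\in\Lin(F)$. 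The $\ZZ$-linear relations among the facet vectors of a parallelotope are generated by the antipodality relations $c(F)+c(-F)=0$ and, for each $6$-belt $F_1,\dots,F_6$ (with $c(F_{i+3})=-c(F_i)$), the relation $c(F_2)=c(F_1)+c(F_3)$; antipodality is respected since $\nu_{-F}=-\nu_F$, and the $6$-belt relation is respected because $\nu_{F_2}$ is a strictly positive combination of $\nu_{F_1}$ and $\nu_{F_3}$ (the three being coplanar) while, by Theorem~\ref{bfr}, $v^T\nu_{F_i}=0$ for some $i\in\{1,2,3\}$ --- a short case check then yields $\operatorname{sgn}(v^T\nu_{F_2})=\operatorname{sgn}(v^T\nu_{F_1})+\operatorname{sgn}(v^T\nu_{F_3})$. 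This is the only place where freeness of $v$ is used directly. Writing $n_v(x)=e_v^Tx$ and $A_vx=x+2n_v(x)v$ completes the set-up.

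For the first inclusion it suffices to check $A_vc(F)\in\Lambda(P')$ for every facet $F$ of $P$. If $v\in\Lin(F)$ then $A_vc(F)=c(F)$, and $F+z(v)$ is a facet of $P'$ whose facet vector is again $c(F)$ (a support-function computation, or comparison with the neighboring tile $P'+c(F)$). If $v\notin\Lin(F)$, say $v^T\nu_F>0$ (otherwise replace $F$ by $-F$), then $A_vc(F)=c(F)+2v$, the facet $F+v$ of $P'$ is shared by $P'$ and $P'+(c(F)+2v)$, and hence has facet vector $c(F)+2v=A_vc(F)$. Therefore $A_v\Lambda(P)\subseteq\Lambda(P')$; in particular $A_v$ has full rank, so $A_v\in\GL_n(\RR)$ with $\det A_v=1+2e_v^Tv\neq0$.

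For the second inclusion, recall from the Introduction that every facet of $P'$ is a translate of a facet of $P$, an extension $F+z(v)$ of a facet $F$ with $v\in\Lin(F)$, or a new facet $G+z(v)$ with $G$ an $(n-2)$-face of $P$; the first two kinds contribute facet vectors of the form $A_vc(F)$, already handled. For a new facet $G+z(v)$, its center of symmetry is the center $c_G$ of $G$ (since $z(v)$ is centered at $0$), so its facet vector is $r'=2c_G$. I would then show that $G$ cannot belong to a $6$-belt: otherwise, by Theorem~\ref{bfr} some facet of that belt is parallel to $v$, and the local-fan analysis from the proof of Theorem~\ref{bfr}, applied now around $G$, shows that the wedge of $P$ at $G$ summed with $\pi_G(z(v))$ is merely a translate of that wedge, producing no new facet. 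Hence $G$ lies in a $4$-belt; writing $G=F_1\cap F_2$, the diagonal tile of that belt gives $r'=2c_G=c(F_1)+c(F_2)\in\Lambda(P)$. Finally, a new facet appears at $G$ exactly when $F_1$ and $F_2$ are pushed in opposite senses upon forming $P'$, i.e.\ $n_v(c(F_1))=-n_v(c(F_2))$; hence $A_vr'=r'+2\bigl(n_v(c(F_1))+n_v(c(F_2))\bigr)v=r'$, so $r'=A_vr'\in A_v\Lambda(P)$. Combined with the first inclusion this gives $\Lambda(P')=A_v\Lambda(P)$.

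The support-function verifications of the first inclusion are routine. The main obstacle is the last paragraph: excluding $6$-belt $(n-2)$-faces as sources of new facets, and showing that a new facet at $G$ forces the two facets of $P$ at $G$ to be pushed in opposite directions. I would settle both by reusing, now around the $(n-2)$-faces of $P$, the classification of feasible pairs $(\Fan,z)$ established in the proof of Theorem~\ref{bfr}.
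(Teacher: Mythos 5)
Your overall architecture (construct $n_v$ explicitly on facet vectors and verify the two inclusions) is genuinely different from the paper's, and most of the individual steps check out. But it rests on one unproved structural claim that carries essentially all the weight: that the $\ZZ$-module of relations among the facet vectors of a parallelotope is generated by the antipodality relations and the $6$-belt relations $c(F_2)=c(F_1)+c(F_3)$. Without this, your $n_v$ is not known to be well defined on $\Lambda(P)$, and everything downstream (linearity of $A_v$, the computation $A_v r'=r'$ for the new facets, both inclusions) collapses. The claim is true, but it is itself a nontrivial theorem --- the natural proof goes through the simple-connectivity of the $2$-skeleton of the dual tiling, whose $2$-cells are the triangular and parallelogram dual cells $D(G)$ of $(n-2)$-faces --- and it is comparable in depth to the lemma you are proving. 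You assert it in one sentence with no argument and no reference; that is a genuine gap. (A smaller inaccuracy: ``in particular $A_v$ has full rank'' does not follow from $A_v\Lambda(P)\subseteq\Lambda(P')$ alone, since a singular image could still sit inside the full-rank lattice $\Lambda(P')$; you need an extra word, e.g.\ that the facet vectors of $P'$ all lie in $A_v\Lambda(P)$ and span $\RR^n$. Also, the ``local-fan analysis'' you invoke to exclude new facets over $6$-belt $(n-2)$-faces is stated in the paper for $(n-3)$-faces; the two-dimensional argument you actually need is easier and correct, but it is Figure~\ref{SumOnSixBelt}, not Figures~\ref{segm1}--\ref{segm2}.)

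The paper sidesteps the relation-module issue entirely with a short continuity trick, which is worth internalizing. From the classification of facets of $P+z(v)$ it first deduces that every element of $\Lambda(P+z(v))$ has the form $x+2nv$ with $x\in\Lambda(P)$, $n\in\ZZ$, and that every $x$ occurs. Then, to see that $n$ is a function of $x$, it rescales: if $x+2n_1v$ and $x+2n_2v$ both lay in $\Lambda(P+z(v))$ with $n_1\neq n_2$, the same combinations would give two points of $\Lambda(P+\lambda z(v))$ at distance $2\lambda|n_1-n_2|\,|v|\to 0$ as $\lambda\to 0$, contradicting discreteness (the minimum distance of $\Lambda(P+\lambda z(v))$ tends to that of $\Lambda(P)>0$). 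Well-definedness immediately gives additivity of $n_v$, and Venkov's projection theorem supplies the layer structure and the vector $e_v$. If you want to keep your bottom-up construction, you must either prove the relation-module statement (the $\QQ$-span version suffices, since $\ZZ$ is torsion-free) or replace it by an argument of this rescaling type.
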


\proof Find all facet vectors of $P+z(v)$. As mentioned before, each facet $F$ of $P+z(v)$ is of one of the following types.
\begin{itemize}
\item $F = F'+z(v)$, where $F'$ is a facet of $P$, $F' \parallel z(v)$. Then if $s$ and $s'$ are the facet vectors of $F$ and $F'$ respectively, then $s = s'$.
\item $F = F'+v$, where $F'$ is a facet of $P$. Then $s = s' + 2v$.
\item $F = F'-v$, where $F'$ is a facet of $P$. Then $s = s' - 2v$.
\item $F = F'+z(v)$, where $F'$ is a standard $(n-2)$-face of $P$ and $v$ is strongly transversal to $F'$. If $s'$ now denotes the standard vector of $F'$, then $s = s'$.
\end{itemize}

From here it follows that every point of $\Lambda(P+z(v))$ is $x + 2n v$, where $x\in \Lambda(P)$ and $n\in \mathbb Z$. Moreover, for every 
$x\in \Lambda(P)$ there exists at least one point of $\Lambda(P+z(v))$ of that form.

Now suppose that $x + 2n_1 v$ and $x + 2n_2 v$ are points of $\Lambda(P+z(v))$. Then, obviously, 
$$x + 2\lambda n_1 v \in \Lambda(P+\lambda z(v)) \quad \text{and} \quad x + 2\lambda n_2 v \in \Lambda(P+\lambda z(v))$$
for every $\lambda > 0$. Then there are two points of $\Lambda(P+\lambda z(v))$ which are arbitrarily close to each other as
$\lambda \to 0$. This is impossible, so $n$ is a function of $x$. 

Since we suppose that $P+z(v)$ is a parallelotope, it is of non-zero width along $v$.
Venkov asserts and proves \cite[item (2) of Theorem 1]{venkovProjection} that facet vectors of all facets parallel to $v$ generate an $(n-1)$-dimensional sublattice if a parallelotope has non-zero width in direction $v$.
In our case, denote this sublattice of $\Lambda(P+z(v))$ by $L_v$. Obviously, we have also $L_v\subset \Lambda(P)$. The lattice $L_v$ determines a partition of $\Lambda(P)$ into $(n-1)$-dimensional layers.

The layers are equally spaced and cover the entire lattice $\Lambda(P)$. Thus there exists a vector $e_v$ such that the scalar product $e_v^T x$
runs through all integers while $x$ runs through $\Lambda(P)$. 

Put $n_v(x) = e_v^T x$ and $A_v x = x + 2 n_v(x) v$ as in the condition of the Lemma. Then every facet vector of the parallelotope $P+z(v)$ has the form of
$A_v s$, where $s$ is some standard vector of $P$, and every facet vector of the parallelotope $P$ has the form of $A_v^{-1} s'$, where $s$ is a facet vector of 
$P+z(v)$. Hence $A_v$ is an isomorphism between $\Lambda(P)$ and $\Lambda(P+z(v))$. \qed

\begin{rem}
It is not hard to see that $n_v(x) = n_{\lambda v}(x)$ for every $\lambda>0$.
\end{rem}

\begin{lem}\label{lem:depend_hered}
Let $x_1, x_2, \ldots, x_k$ be points of $\Lambda(P)$. Then the points
$$A_v x_1, A_v x_2, \dots, A_v x_k$$
are affinely dependent if and only if $x_1, x_2, \ldots, x_k$ are affinely dependent.
\end{lem}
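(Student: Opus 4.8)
The plan is to exploit the fact that $A_v$ is not merely a set map but a genuine \emph{linear} automorphism of $\RR^n$. Since $n_v(x)=e_v^Tx$ is linear, one has $A_v x = x + 2(e_v^Tx)v = (I + 2\,v\,e_v^T)x$, so $A_v$ is the linear operator with matrix $I + 2\,v\,e_v^T$. By Lemma~\ref{lem:operator} this operator carries the full-rank lattice $\Lambda(P)$ isomorphically onto the full-rank lattice $\Lambda(P+z(v))$, hence is invertible on $\RR^n$. (Equivalently, by the matrix determinant lemma $\det A_v = 1 + 2\,e_v^Tv$, which is nonzero precisely because $A_v$ is an isomorphism; the inverse is then given explicitly by Sherman--Morrison, $A_v^{-1} = I - \tfrac{2}{1+2\,e_v^Tv}\,v\,e_v^T$.) Note that the hypothesis $x_i\in\Lambda(P)$ plays no role beyond guaranteeing that $A_v x_i$ is defined; the argument is purely linear-algebraic.

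Once $A_v$ is known to be an invertible linear --- a fortiori affine --- map, the lemma follows from the coordinate-free description of affine dependence. Recall that $x_1,\dots,x_k$ are affinely dependent if and only if there exist scalars $\lambda_1,\dots,\lambda_k$, not all zero, with $\sum_{i=1}^k\lambda_i = 0$ and $\sum_{i=1}^k\lambda_i x_i = 0$. If such $\lambda_i$ exist, applying $A_v$ gives $\sum_{i=1}^k\lambda_i A_v x_i = A_v\!\left(\sum_{i=1}^k\lambda_i x_i\right) = A_v(0) = 0$, while the relation $\sum_{i=1}^k\lambda_i = 0$ is untouched; hence $A_v x_1,\dots,A_v x_k$ are affinely dependent. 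The reverse implication is identical, applying $A_v^{-1}$ to an affine dependence among the $A_v x_i$. This establishes both directions.

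I do not expect any real obstacle here: the only point requiring care is that $A_v$ be linear and invertible, and both facts are already delivered by Lemma~\ref{lem:operator} (linearity of $n_v$ is part of its statement, invertibility of $A_v$ is its conclusion), so no additional work is needed. Everything else is the standard invariance of affine dependence under invertible affine transformations, which I would state in one line. If desired, one can phrase the whole proof without invoking Lemma~\ref{lem:operator} by instead checking directly that $1 + 2\,e_v^Tv\neq 0$ --- but since that inequality is exactly what Lemma~\ref{lem:operator} asserts, it is cleanest simply to cite it.
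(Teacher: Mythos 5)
Your proof is correct and follows essentially the same route as the paper's: the paper simply notes that $A_v$ is non-singular because $\Lambda(P+z(v))$ is full-dimensional, and concludes that an invertible linear map preserves affine dependence. Your additional details (the explicit rank-one form of $A_v$, the determinant lemma, and the Sherman--Morrison inverse) are harmless elaborations of the same one-line argument.
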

\proof The matrix $A_v$ is non-singular since $\Lambda(P+z(v))$ is full-dimensional. Hence the result follows. \qed

\begin{lem}\label{lem:st_vec_hered}
Let $x\in \Lambda(P)$. Then the following conditions are equivalent.
\begin{enumerate}
\item[(i)] $A_v x$ is a standard vector for $P+z(v)$.
\item[(ii)] $x$ is a standard vector for $P$ and $n_x(v)\in \{0, \pm 1 \}$.
\end{enumerate}
\end{lem}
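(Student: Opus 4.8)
The plan is to prove the two implications separately, using Lemma~\ref{lem:operator}, the associated remark, and Lemma~\ref{lem:depend_hered} as the main tools, together with a direct geometric analysis of the face $F = P \cap (P+x)$ and its image under the tiling map $A_v$.

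First I would set up the dictionary. Recall from the proof of Lemma~\ref{lem:operator} that $A_v$ is the linear isomorphism $\Lambda(P) \to \Lambda(P+z(v))$ with $A_v y = y + 2 n_v(y) v$, and that the facets of $P+z(v)$ come in the four types listed there. A standard vector $s$ of $P$ with $s \parallel z(v)$ maps to the standard vector $s = A_v s$ of the facet $F' + z(v)$ of $P+z(v)$; a standard vector $s$ of $P$ with $z(v)$ strongly transversal to the corresponding $(n-2)$-face maps to itself as the standard vector of $F' + z(v)$; and a standard vector $s$ with $n_v(s) \neq 0$ gives rise to the translated facets $F' \pm v$, hence to $A_v s = s + 2 n_v(s) v$ only when $n_v(s) = \pm 1$ (if $|n_v(s)| \geq 2$ the face $F'$ moves ``too far'' and $A_v s$ is no longer a facet vector). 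So the condition $n_v(x) \in \{0, \pm 1\}$ is exactly the bookkeeping that separates the cases in the proof of Lemma~\ref{lem:operator}, and (ii)$\Rightarrow$(i) should follow by simply reading that case analysis backwards: given $x$ standard for $P$ with $n_v(x) \in \{0,\pm 1\}$, exhibit the facet of $P+z(v)$ whose facet vector is $A_v x$, namely $F + z(v)$ (if $n_v(x)=0$, with a further split according to whether $x \parallel v$ or $v$ is strongly transversal to $F$) or $F \pm v$ (if $n_v(x) = \pm 1$, where $F$ is the facet of $P$ with facet vector $x$).

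For (i)$\Rightarrow$(ii) I would argue contrapositively. Suppose $A_v x$ is a facet vector of $P+z(v)$. By Lemma~\ref{lem:operator} every facet vector of $P+z(v)$ is $A_v s$ for some standard vector $s$ of $P$, and since $A_v$ is injective we get $x = s$, so $x$ is already standard for $P$. It remains to rule out $|n_v(x)| \geq 2$. Here I expect to use that the standard face $F \cap (P+x)$ of $P+z(v)$ with facet vector $A_v x$ must actually be a translate $F' \pm v$ of a facet of $P$ (the only one of the four types for which the facet vector acquires a $2v$-component), which forces $n_v(x) = \pm 1$; alternatively, one invokes the Remark that $n_v = n_{\lambda v}$ for $\lambda > 0$ and lets $\lambda \to 0$: if $|n_v(x)| \geq 2$ then the facet vectors $A_{\lambda v} x$ and, say, $A_{\lambda v}(x - L_v\text{-vector})$ of $P + z(\lambda v)$ stay separated while the underlying facets collide, contradicting that $P + z(\lambda v)$ is a parallelotope for all small $\lambda > 0$ (the same ``arbitrarily close points'' device already used in the proof of Lemma~\ref{lem:operator}).

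The main obstacle is the geometric step in (i)$\Rightarrow$(ii): cleanly showing that a facet vector of $P+z(v)$ with a nonzero $v$-component of size $\geq 2$ cannot occur. The cleanest route is probably the continuity/limit argument via $n_{\lambda v} = n_v$, mirroring Lemma~\ref{lem:operator}'s proof, so that the lemma becomes almost a corollary of the structural description already established there; I would also double-check the boundary case $n_v(x) = 0$ to confirm that $A_v x = x$ really is a facet vector of $P+z(v)$ (it is, being the facet vector of either $F + z(v)$ when $x \parallel v$, or of the strongly transversal extension otherwise), so that the equivalence is tight and not merely an inclusion.
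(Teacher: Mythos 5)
Your proposal conflates standard vectors with facet vectors, and this is a genuine gap. A standard vector of $P$ is, by Dolbilin's definition recalled in Section~\ref{sec:gen_results}, any lattice vector $x$ with $P\cap(P+x)\neq\varnothing$; the shared face may have any dimension (for $P_V(\Esix)$ the standard faces are facets \emph{and} $m$-edges, and the later applications of this lemma in Section~\ref{SectionE6zonotope} depend on the edge case). Your argument for (i)$\Rightarrow$(ii) begins ``suppose $A_vx$ is a facet vector of $P+z(v)$'' and then invokes the four-type classification of facets from the proof of Lemma~\ref{lem:operator}; that classification says nothing about standard vectors of lower-dimensional standard faces, so you only prove the implication in a special case. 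The same restriction undercuts your (ii)$\Rightarrow$(i): ``exhibit the facet of $P+z(v)$ whose facet vector is $A_vx$'' is not the right goal when $x$ is, say, the standard vector of an edge, since $A_vx$ then need not be a facet vector at all. The paper's (ii)$\Rightarrow$(i) is in fact much softer and needs no case analysis: if $x$ is standard then $P$ and $P+x$ share the point $x/2$, and for $n_v(x)\in\{0,\pm1\}$ one checks directly that $P+z(v)$ and $(P+z(v))+A_vx$ share the point $A_vx/2$ (e.g.\ for $n_v(x)=1$ one has $P+z(v)\supseteq P+v\ni x/2+v$ and $(P+z(v))+x+2v\supseteq P+x+v\ni x/2+v$).

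Second, your mechanism for excluding $|n_v(x)|\ge 2$ is not yet an argument. The remark that only translated facets acquire a $2v$-component again presupposes the facet classification, and the limit sketch (``the facet vectors stay separated while the underlying facets collide'') does not identify a contradiction with any stated property. The paper's device is different and is the real content of the proof: given that the two tiles share $z=A_vx/2$, intersect both with the line $\ell$ through $z$ parallel to $v$; the face-to-face property leaves exactly three configurations for the resulting pair of segments (overlapping, or abutting at $z$ in one of two orders), and the fact that $P+\lambda z(v)$ tiles face-to-face for \emph{every} $\lambda>0$, together with $n_{\lambda v}=n_v$, forces $n_v(x)=0$, $-1$ or $1$ respectively in the three configurations --- any other value destroys face-to-faceness already at $\lambda=1\pm\varepsilon$. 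Your instinct to vary $\lambda$ is the right one, but it has to be applied to these one-dimensional sections along $\ell$, not to a vaguely described collision of facets.
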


\proof In this proof we write $P+x$ for the translate of $P$ centered at $x$. This coincides with the standard notation under assumption that $P$ is
centered at the origin.

(ii) $\Rightarrow$ (i). If $x$ is a standard vector, then $P+x$ and $P$ share the point $x/2$. Under assumption $n_v(x) \in \{0, \pm 1 \}$
it immediately follows that 
$$(P + z(v)) + A_v x \quad \text{and} \quad P+z(v)$$ 
share the point  $A_v x/2$, which is equivalent to (i).

(i) $\Rightarrow$ (ii). Suppose that the parallelotopes 
$$(P + z(v)) + A_v x \quad \text{and} \quad (P+z(v))$$
share the point  $z = A_v x/2$.
Let $\ell$ be the line passing through $z$ and parallel to $v$.
Since the tiling is face-to-face, then there are three options.
\begin{itemize}
\item For some $\alpha \geq 1$ we have
\begin{equation} \label{eq:stand1}
\left\{(P + z(v)) + A_v x \right\}\cap \ell = 
(P + z(v)) \cap \ell = [v(z-\alpha v), v(z+\alpha v)]
\end{equation}

\item For some $\alpha \geq 1$ we have
\begin{equation} \label{eq:stand2}
\left\{\begin{array}{rcl}
\left\{(P + z(v)) + A_v x \right\}\cap \ell &=& [z-2\alpha v, z]\\
(P + z(v)) \cap \ell &=& [z, z+2\alpha v]
\end{array}\right.
\end{equation}

\item For some $\alpha \geq 1$ we have
\begin{equation} \label{eq:stand3}
\left\{\begin{array}{rcl}
\left\{(P + z(v)) + A_v x \right\} \cap \ell &=& [z, z+2\alpha v]\\
(P + z(v))  \cap \ell &=& [z-2\alpha v, z]
\end{array}\right.
\end{equation}
\end{itemize}
 
Now notice that for every $\lambda>0$ there is a face-to-face tiling by translates of $P+\lambda z(v)$. Using this, consider all three cases separately.

If Equation \eqref{eq:stand1} holds, then $n_v(x) = 0$. Otherwise the tiling by $P+\lambda z(v)$ is not face-to face for $\lambda = 1+\varepsilon$
(and for $\lambda = 1-\varepsilon$) if $\varepsilon$ is a sufficiently small positive number. Indeed, one can check that 
$$(P + \lambda z(v)) + A_{\lambda v} x \cap (P + \lambda z(v))$$
is nonempty, but this intersection is not a face of any of the two parallelotopes.

If Equation \eqref{eq:stand2} holds, then $n_v(x) = -1$. Indeed, if $n_v(x) < -1$, then the tiling by $P+\lambda z(v)$ is not face-to face for 
$\lambda = 1-\varepsilon$, where $\varepsilon$ is a sufficiently small positive number. If $n_v(x) > -1$, then the tiling by $P+\lambda z(v)$ is 
not face-to face for $\lambda = 1+\varepsilon$.

The case of Equation \eqref{eq:stand3} does not differ with the case of \eqref{eq:stand2}, and we get $n_v(x)=1$. \qed

\section{Algorithms for computing free vectors}\label{SectionEnumAlgo}

For highly symmetric lattices, some efficient techniques for computing
their Delaunay tessellation and so by duality Voronoi polytopes have been
introduced in \cite{complexity}.
These techniques use the quadratic form viewpoint for the actual computation
and can be adapted to the enumeration of free vectors and strongly regular
faces.

For the enumeration of free vectors, Theorem \ref{bfr} gives implicitly
a method for enumerating them. The first step is, given a lattice $L$, to use \cite{complexity}
in order to get the Delaunay tessellation $\mathcal D(L)$. The triangular faces of
$\mathcal D(L)$ enumerate all 6-belts of the parallelotope $P_V(L)$.
By Theorem \ref{bfr}, every free vector of $P_V(L)$ must satisfy an orthogonality condition 
for each $6$-belt. So, if we have
$N$ $6$-belts, then we have $3^N$ cases to consider, which can be large.
The enumeration technique consider the $6$-belts one by one by making
choice at each step. We use symmetries to only keep non-isomorphic
representatives of all choices.
We also use the fact that any choice among the three possibilities implies
a linear equality on the coefficients of the free vector $v$.
Hence the choice made for some $6$-belts might imply other choices for
other $6$-belts. So, the dimension decreases at each step and the number
of choices is thus only $3^m$ at most, where $m=\min(n,N)$ and $n$ is dimension of $L$.
At the end we have a number of vector spaces containing the free vectors.
$P_V(L)$ is nonfree, respectively finitely free, if and only if all the vector
spaces are $0$-dimensional, respectively at most $1$-dimensional.

The enumeration of strongly transversal faces can be done in the following
way: If $G$ is a strongly transversal face of $P$, then any subface of $G$
is also strongly transversal. Thus starting from the vertices of $P$, which
correspond to Delaunay polytopes of $P$, we can enumerate all strongly
transversal faces of $P$ and hence describe the facet and belt structure
of $P + Z(U)$ with $U$ a set of free vectors. By using Venkov's condition
this allows to determine whether or not $P + Z(U)$ is a parallelotope.

Another variant is to write $U=\{u_1, \dots, u_p\}$ and write
\begin{equation*}
P + Z(U) = P' + z(u_p) \mbox{~with~} P'=P + Z(\{u_1, \dots, u_{p-1}\}).
\end{equation*}
If we know that $P'$ is a Voronoi polytope, then we can test whether
$P + Z(U)$ is a parallelotope by testing whether $u_p$ is a free vector
via Theorem \ref{bfr}.
Furthermore, by using the sign condition from Theorem \ref{EquivForVoronoi}
we can test whether $P + Z(U)$ is a Voronoi polytope.
The method relies on computing the Delaunay tessellation.

We choose to use the second method because it allows us to distinguish
between parallelotope and Voronoi polytope.
The process of enumeration is then done by considering all subsets $U$
of the set of free vectors and adding vectors one by one and testing whether
they are feasible or not.
By ${\mathcal F}_{min}(L)$ we denote the set of minimal forbidden subsets
of $L$.
Similarly ${\mathcal F}_{max}(L)$ denotes the maximal feasible subsets
of $L$.
Key information on those subsets are given in Table \ref{FreeInformationForLattices} for $12$ lattices.
The cost of computing the Delaunay tessellation is relatively expensive,
hence we always use the list of already known forbidden subsets in order
to avoid such computation whenever possible.
We found out that in the case that we consider in Section
\ref{SectionOtherLattice}, whenever a polytope $P_V(L) + Z(U)$ is a
parallelotope then it is also a Voronoi polytope, thereby confirming
Voronoi's conjecture in those cases.

However, for $\Esix$ we prefer to use the first enumeration method. Then we confirm the outcome
by an explicit proof.

\section{The root lattices $\Esix$ and $\Esix^*$}\label{SectionRootlattice}

There are exactly $27$ straight lines on any smooth non-degenerate cubic
surface in the $3$-dimensional projective space. This fact is very well known
(see, for example, \cite{Cox}, and other papers of Coxeter). It
is proved in many textbooks on algebraic geometry (see, for example,
\cite[ch.3,\S 7]{Re}, \cite[ch.IV, \S 2.5]{Sha}). The combinatorial
configuration of the set $\mathcal L$ of these $27$ lines is unique. For
example, if two lines $l,l'\in\mathcal L$ intersect, then there is a unique
line $l''\in\mathcal L$ that intersects both lines $l,l'$. Every three mutually
intersecting lines generate a tangent plane of the cubic surface.
Each line intersects exactly $10$ lines and belongs to exactly $5$ of
all $45$ tangent planes.

Schl\"afli described his famous {\em double six}
\[\begin{array}{cccccc}
a_1 &a_2& a_3& a_4& a_5& a_6\\
b_1& b_2& b_3& b_4& b_5& b_6
\end{array}\]
which is a special arrangement of twelve lines from $\mathcal L$ on the surface
such that any two of them intersect if and only if they occur in different rows
and different columns. Any two columns determine a pair of planes $a_ib_j$
and $a_jb_i$ whose intersection $c_{ij}=c_{ji}$ intersects all the four
lines and therefore must lie entirely on the surface. In this way Schl\"afli
obtained his notations $a_i,b_j, c_{ij}$ for all $27$ lines.

Burnside \cite[pp.485--488]{Bur}, used these symbols of $27$ lines as
elements of an algebra (in fact as vectors in $6$-dimensional space). This
amounts to representing the lines by $27$ points in an affine $6$-space, such
that the $45$ triangles representing the tangent planes all have the same
centroid. Using this centroid as the origin, he applies Schl\"afli's symbols
for the $27$ lines to the positions of vectors of the $27$ points, so that
\begin{equation}\label{abc}
a_i+b_j+c_{ij}=0, \mbox{  }c_{ij}+c_{kl}+c_{mn}=0, \mbox{ where }
\{ijklmn\}=\{123456\}=I_6.
\end{equation}

Choose a set $\mathcal A$ of six vectors $a_1, a_2 \ldots, a_6 \in \mathbb R^6$ so that
$a_i^T a_i = 4/3$ and $a_i^T a_j = 1/3$ for all possible $i \neq j$. One can see that such set $\mathcal A$ exists and forms a basis of $\mathbb R^6$.

Define
$$h=\frac{1}{3}\sum_{i\in I_6}a_i, $$
and let $b_i=a_i-h$, and $c_{ij}=h-a_i-a_j$ for all possible $i \neq j$.

The lattice integrally generated by the basis $\mathcal A$ and the vector
$h$ is $\Esix^*$ which is dual to the root lattice $\Esix$.
This representation of the lattice $\Esix^*$ was used by Baranovskii
in \cite{Ba} for a description of Delaunay polytopes of $\Esix^*$.
Barnes \cite[Formula (8.5)]{Brn} uses the basis $\mathcal A$ to
describe minimal vectors of $\Esix^*$. Vectors of $\Esix^*$
have all coordinates in the basis $\mathcal A$ equal to one third of an
integer.
The $27$ vectors of the set
\begin{equation}\label{min}
{\mathcal M}=\left\{a_i,b_i=a_i-h, c_{ij}=h-a_i-a_j, \mbox{ where }
1\le i<j\le 6\right\},
\end{equation}
are, up to sign, the $27$ minimal vectors of the lattice
$\Esix^*$.
The lattice $\Esix^*$ has an automorphism group equal to
$W(\Esix)\times \{\pm Id_6\}$ with $W(\Esix)$ being the Weyl group
of $\Esix$ (see \cite{humphreyscoxeter} for more details),
which is also the automorphism group of ${\mathcal M}$.

Let $A$ be the Gram matrix for the basis $\mathcal A$. Then the inverse matrix $E = A^{-1}$, which is the Gram matrix of the dual basis
${\mathcal E}=\{e_1, e_2, \ldots, e_6\}$, has elements
\begin{equation*}
e_{ii}=e_i^2=\frac{8}{9}, \mbox{  }i\in I_6, \mbox{  }
e_{ij}=e_{ji}=e_i^Te_j=-\frac{1}{9}, \mbox{  }i\neq j.
\end{equation*}
It is easy to check that vectors of the dual lattice
$\Esix$ have in the basis $\mathcal E$ the following form
\[\sum_{i\in I_6}z_ie_i, \mbox{ where } z_i\in\ZZ \mbox{ and }
\sum_{i\in I_6}z_i\equiv 0 \pmod 3. \]

\section{The Schl\"afli polytope $P_{Schl}$}\label{SectionSchlafli}
The convex hull of end-points of all vectors of the set $\mathcal M$ is the
Schl\"afli polytope $P_{Schl}$, i.e., $P_{Schl}=\conv\mathcal M$.
Since $P_{Schl}$ is a Delaunay polytope of the lattice $\Esix$ and
the Voronoi polytope $P_V(\Esix)$ of the lattice $\Esix$ is
the convex hull of $P_{Schl}$ and its centrally symmetric copy $P^*_{Schl}$
(see \cite{CS}), we have to study properties of $P_{Schl}$.

Two vertices $q, q' \in \mathcal M$ of $P_{Schl}$ are adjacent by an edge if and only if $q^Tq'=\frac{1}{3}$, i.e., if the corresponding lines of
$\mathcal L$ are skew. Let $X\subset\mathcal M$ be a subset of cardinality 6 such that $q^Tq'=\frac{1}{3}$ for all $q,q'\in X, q\not=q'$. Then 
$\conv X$ is a simplicial facet $\alpha_5(r)$, where $r = 1/3 \sum\limits_{q\in X} q$.

Let $q\in \mathcal M$. Then there are 5 non-ordered pairs 
$$q'_i, q''_i \in \mathcal M, \quad i = 1, 2, \ldots, 5 \quad \text{with }\; q + q'_i + q''_i = 0. $$ 
Denote by $T(q)$ the set $\{q'_1, q''_1, q'_2, q''_2, \ldots, q'_5, q''_5 \}$. Then $\beta_5(q) = \conv T(q)$
is a cross-polytopal facet of $P_{Schl}$.

The polytope $P_{Schl}$ has two types of facets: $36$ pairs of simplicial
facets $\alpha_5(r)$ and $\alpha_5(-r)$; and $27$ cross-polytopal facets of form $\beta_5(q)$. Each facet $\beta_5(q)$ is orthogonal to the 
corresponding vector $q\in\mathcal M$ and is opposite to the vertex $q$. Both types of facets are regular polytopes with facets $\alpha_4$. Thus
all faces of $P_{Schl}$ of dimension $k\le 4$ are regular simplices $\alpha_k$.

Let $I_5=\{1,2,\dots,5\}$, and let $q'_i$, $q''_i$, $i\in I_5$, be the five pairs of opposite vertices of the facet $\beta_5(q)$. 
For every $J\subseteq I_5$ and $J'=I_5 \setminus J$ define
$$T_J(q)=\left\{q'_i : i\in J\}\cup\{q''_i : i\in J'\right\}.$$
Obviously, for each of the 32 subsets $J\subseteq I_5$ the polytope $\conv T_J(q)$ is a $4$-simplex which is a facet of $\beta_5(q)$, and vice versa:
if $\alpha_4$ is a facet of $\beta_5(q)$, then $\alpha_4 = \conv T_J(q)$ for some $J\subseteq I_5$.

\begin{table}\label{FirstTable}
\begin{center}
\begin{tabular}{|r||c|c|c|c|c|c|c|}
\hline
$\dim G$ &$0$ & $1$ & $2$& $3$& $4$& $5$& $6$\\
\hline
type of $G$ & vertex $\alpha_0$ & edge $\alpha_1$ & $\alpha_2$ &$\alpha_3$ &$\alpha_4$   &$\alpha_5$ or $\beta_5$ &$P_{Schl}$\\ \hline
$n(G)$      &$27$               & $216$           & $720$      & $1080$    & $432 + 216$& $72 + 27$               &$1$      \\ \hline
\end{tabular}
\end{center}
\caption{Faces $G$ of the Schl\"afli polytope $P_{Schl}$.}
\end{table}

\begin{lem}
\label{TJ}
Any subset $X\subseteq{\mathcal M}$ of cardinality $|X|=5$ such that
$p^Tp'=\frac{1}{3}$ for all $p,p'\in X$, $p\not=p'$, is a set $T_J(q)$ for
some $q\in{\mathcal M}$ and $J\subseteq I_5$.
\end{lem}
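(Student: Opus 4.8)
The statement to prove is: any $5$-element subset $X \subseteq \mathcal{M}$ with $p^Tp' = \frac13$ for all distinct $p,p' \in X$ arises as $T_J(q)$ for some $q \in \mathcal{M}$ and $J \subseteq I_5$. The natural strategy is to exhibit the vertex $q$ explicitly and then verify the combinatorics. Recall from the description of $P_{Schl}$ that a set of $6$ mutually-$\frac13$ vectors in $\mathcal{M}$ is the vertex set of a simplicial facet $\alpha_5(r)$ with $r = \frac13 \sum_{p \in X} p$; the plan is to first show that our $5$-element $X$ extends \emph{uniquely} to such a $6$-element clique, and that the sixth vertex, call it $q_0$, is the one we want, with $q = -q_0$.

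First I would set $r = \frac13 \sum_{p \in X} p$ and compute $r^T p$ for $p \in X$ and $r^Tr$ using the known inner product values $p^Tp = \frac43$, $p^Tp' = \frac13$; this gives $r^Tp = \frac13(\frac43 + 4\cdot\frac13) = \frac89$ and $r^Tr = \frac19(5\cdot\frac43 + 20\cdot\frac13) = \frac19 \cdot \frac{40}{3}$. Hmm, let me instead work with the line configuration: five mutually skew lines among the $27$ correspond (after relabeling under $W(\Esix)$, using that it acts transitively on such configurations — this should be checkable from the double-six description, or cited from the $27$-lines literature) to, say, $a_1,\dots,a_5$. Then one directly reads off from \eqref{abc}/\eqref{min} which vertices $q \in \mathcal{M}$ satisfy $q^T a_i = -\frac23$ (equivalently, the corresponding line meets all five): these should be exactly $b_6$ together with the $c_{i6}$, or a similar family, revealing $q = a_6$ as the opposite vertex, and the ten vectors $\{q'_i, q''_i\}$ as the ten vectors negative to... — in any case, the five pairs summing to zero with $q$ are precisely the pairs $\{p, -q - p\}$, and one checks $X \subseteq T(q)$ picks out one element from each pair, which is exactly the defining property of some $T_J(q)$.

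So the key steps, in order, are: (1) verify that a $5$-clique in the "$\frac13$-graph" on $\mathcal{M}$ extends uniquely to a $6$-clique (equivalently: five mutually skew lines lie in a unique Schläfli $\alpha_5$ facet), using either transitivity of $W(\Esix)$ on such cliques plus an explicit model, or a direct linear-algebra argument that the vector $-r$ scaled appropriately lies in $\mathcal{M}$ and is the unique completion; (2) identify $q \in \mathcal{M}$ as (the negative of) the sixth clique vertex — equivalently the vertex of $\beta_5(q)$ opposite facet $\alpha_5$ — and check that $X$ meets each of the five antipodal pairs $\{q'_i, q''_i\}$ of $\beta_5(q)$ in exactly one point; (3) read off $J = \{i : q'_i \in X\}$, giving $X = T_J(q)$. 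The main obstacle I expect is step (1): pinning down that the completion is unique and lands in $\mathcal{M}$ rather than merely in the lattice $\Esix^*$ — this needs the fact (from Table~\ref{FirstTable} and the facet description) that the $\alpha_5$-facets are exactly the maximal $\frac13$-cliques, or an explicit exploitation of the $27$-lines incidence geometry; the rest is bookkeeping with \eqref{abc}.
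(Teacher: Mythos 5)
Your step (1) is the crux, and it is false: a $5$-element subset $X\subseteq\mathcal M$ with all pairwise products $\frac{1}{3}$ does \emph{not} in general extend to a $6$-clique. The $5$-cliques fall into two $W(\Esix)$-orbits (so the transitivity you invoke to reduce to $X=\{a_1,\dots,a_5\}$ also fails): $432$ of them span a $4$-face shared between a simplicial facet $\alpha_5(r)$ and a cross-polytopal facet $\beta_5(q)$, and these do extend uniquely to the vertex set of the $\alpha_5(r)$; but the remaining $216$ span a $4$-face shared between \emph{two} cross-polytopal facets and admit no extension at all. A concrete example is $X=\{c_{12},b_3,b_4,b_5,b_6\}$: one checks from \eqref{abc} that all pairwise products are $\frac{1}{3}$, yet no $p\in\mathcal M$ has $p^Tq=\frac{1}{3}$ for all $q\in X$ (the only candidate among the $c_{kl}$ would be $c_{12}$ itself, every $a_i$ fails against some $b_j$, and $b_1,b_2$ fail against $c_{12}$). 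This $X$ is nonetheless covered by the lemma, being $T_J(a_1)$ (pairs $\{b_j,c_{1j}\}$, $j=2,\dots,6$) and also $T_{J'}(a_2)$, which is exactly the ambiguity your construction cannot produce. A secondary error: even in the extendable case the transversal vertex $q$ is not $-q_0$ (note $-q_0\notin\mathcal M$); for $X=\{b_2,\dots,b_6\}$ with sixth clique vertex $q_0=b_1$ the correct choice is $q=a_1$, the line meeting all five, i.e.\ the common element of $X$'s neighborhoods in the $-\frac{2}{3}$ relation.

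The paper's proof avoids all of this by arguing purely through the facial structure of $P_{Schl}$: $\conv X$ is a $4$-face of $P_{Schl}$; every $4$-face lies in exactly two facets, and since an $\alpha_5$-facet is adjacent across $4$-faces only to $\beta_5$-facets, \emph{every} $4$-face (whether of type $\alpha_5/\beta_5$ or $\beta_5/\beta_5$) is a $4$-face of some $\beta_5(q)$; finally the $4$-faces of $\beta_5(q)$ are precisely the sets $\conv T_J(q)$, $J\subseteq I_5$. If you want to salvage your approach you would have to treat the non-extendable orbit separately, at which point you would essentially be reconstructing the paper's argument.
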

\proof It is not hard to see that $\conv X = \alpha_4(X)$ is a
$4$-face of $P_{Schl}$. 
Therefore, $\alpha_4(X)$ is a $4$-face either of a facet
$\beta_5(q)$ for some $q\in{\mathcal M}$, or of a facet $\alpha_5(r)$ for
some $r\in{\mathcal R}$. But each facet of type $\alpha_5(r)$ is contiguous
in $P_{Schl}$ by a $4$-face only to facets of type $\beta_5(q)$. This means
that each $4$-face of $P_{Schl}$ is a $4$-face of some $\beta_5(q)$. 
The vertex set of each $4$-dimensional subface of $\beta_5(q)$ equals $T_J(q)$ for some $J\subseteq I_5$. \qed

\section{The Voronoi polytope $P_V(\Esix)$}\label{SectionVoronoi}

The convex hull of $P_{Schl} = \conv \mathcal M$
and its centrally symmetric copy $P^*_{Schl} = \conv (- \mathcal M)$ is the Voronoi polytope of the
lattice $\Esix$, i.e.,
\[P_V(\Esix)=\conv(P_{Schl}\cup P^*_{Schl})=\diplo(P_{Schl}), \]
(see \cite{CS}, where the notation $\conv(P\cup P^*)=\diplo(P)$ is
introduced). We also mention the notable fact that the intersection $P_{Schl}\cap P^*_{Schl}$
is the Voronoi polytope of the dual lattice $\Esix^*$.

$P_V(\Esix)$ has $27$ pairs of opposite vertices $q, -q$ 
for $q\in\mathcal M$, and $36$ pairs of parallel opposite facets 
$$F(r)=\conv\bigl(\alpha_5(r)\cup (-\alpha_5(-r))\bigr)=\diplo(\alpha_5).$$

The edges of $P_V(\Esix)$ are of two types: $r$- and
$m$-edges. Vertices $q$ and $q'$ are adjacent by an $r$-edge if and
only if $q^Tq'=\frac{1}{3}$, and either $q,q'\in\mathcal M$ or
$q,q'\in-\mathcal M$. Each $m$-edge connects a vertex $q \in \mathcal M$ to a
vertex $-q' \in -\mathcal M$.
The $m$-edge between $q$ and $q'$ exists if and only if $q^Tq'=-\frac{2}{3}$.

Dolbilin in \cite{Do} shows that there is a one-to-one correspondence between minimal vectors of a coset of $L/2L$ and standard faces of the Voronoi polytope $P_V(L)$. A face $G$ of $P_V(L)$ is called {\em standard} if $G=G(t)=P_V(L)\cap P(t)$, where $P(t)=P_V(L)+t$ and $t$ is a minimal vector of a coset $L/2L$. If $G(t)$ is a facet of $P_V(L)$, then $t$ coincides with its facet vector.

Besides, there is a one-to-one correspondence between standard faces of $P_V(L)$ and centrally symmetric faces of the Delaunay tiling. Let $D(G)$ be a centrally symmetric Delaunay face related to a standard face $G(t)$. Then $t$ is a diagonal of the centrally symmetric Delaunay face $D(G)$. All other diagonals of $D(G)$ are all other minimal vectors of the coset containing $t$.

Translates and centrosymmetrical copies of $P_{Schl}$ form the Delaunay tiling for $\Esix$. Hence all minimal vectors of $\Esix$ 
are exactly all vectors connecting pairs of vertices of $P_{Schl}$. These vectors naturally split into two classes:
$$\mathcal R = \{ p-p' : p, p' \in \mathcal M, p^Tp' = 1/3 \} \quad \text{and}$$
$$\mathcal T = \{ p-p' : p, p' \in \mathcal M, p^Tp' = -2/3 \}.$$

If $p^Tp' = 1/3$, then $[v(p), v(p')]$ is an edge of $P_{Schl}$, i.e. a $1$-dimensional Delaunay cell of $\Esix$. Thus $\mathcal R$ is the set of facet vectors of
$P_V(\Esix)$. The notation $F(r)$ for the facet of $P_V(\Esix)$ with $r$ being its facet vector coincides with the definition of $F(r)$ above.

Two facets $F(r)$ and $F(r')$ of $P_V(E_6)$ intersect by a $4$-face if and only if $r^Tr'=1$. In this case $r''=r-r' \in \mathcal R$, and
$r^Tr''=-(r')^Tr''=1$. Hence the six facets $F(\pm r),F(\pm r'),F(\pm r'')$ form a $6$-belt. The Voronoi polytope $P_V(E_6)$ has 120 $6$-belts
studied in Section~\ref{SectionFreedom}.

If $p^Tp' = -2/3$, then $[v(p),v(p')]$ is a diagonal of a facet of $P_{Schl}$. Consequently, each  vector $t\in\mathcal T$ is a 
standard vector of some $m$-edge of $P_V(\Esix)$. Denote the endpoints of that edge by $a(t)$ and $-b(t)$ so that 
$a(t), b(t) \in \mathcal M$. Let $q(t) = -a(t)-b(t)$. It is easy to check that $q(t)\in \mathcal M$.

Since $\mathcal R \cup \mathcal T$ is the set of all minimal vectors of $\Esix$, the Voronoi parallelotope $P_V(\Esix)$ has no standard faces other than
its facets and $m$-edges.

\section{Freedom of $P_V(\Esix)$}\label{SectionFreedom}

Recall that a vector $v$ is free for a parallelotope $P$ if the Minkowski sum $P+z(v)$ is also a parallelotope.

\begin{proposition}\label{tri}
There are the following $120$ triples generating $6$-belts of the Voronoi
polytope $P_V(\Esix)$, where facet vectors are given in the basis
$\mathcal E$, $e(I_6)=\sum_{i\in I_6}e_i$, $e(S)=\sum_{i\in S}e_i$ and $S$
is a subset of $I_6$ of cardinality $3$.
\begin{itemize}
\item[(i)] $e_i-e_j, e_j-e_k, e_k-e_i$, $i,j,k\in I_6$;

\item[(ii)] $e(S), e({\overline S}), e(I_6)$, ${\overline S}=I_6-S$;

\item[(iii)] $e(S), e_i-e_j, e(S)-e_i+e_j$, $i\in S, j\not\in S$.
\end{itemize}
\end{proposition}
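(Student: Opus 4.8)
The plan is to identify the $6$-belts of $P_V(\Esix)$ combinatorially via the criterion established earlier: two facets $F(r)$ and $F(r')$ lie in a common $6$-belt exactly when $r^Tr'=1$, and in that case the belt is $\{F(\pm r), F(\pm r'), F(\pm r'')\}$ with $r''=r-r'$. So the task reduces to enumerating, up to the action of $\Aut(\mathcal M)=W(\Esix)\times\{\pm\mathrm{Id}\}$, the unordered triples $\{r,r',r''\}\subset\mathcal R$ with $r''=r-r'$ and $r^Tr'=r^Tr''=-(r')^Tr''=1$; since all elements of $\mathcal R$ are roots ($r^2=2$), this is the same as asking for a triple of roots in $\Esix$ summing to zero after an appropriate choice of signs, equivalently an $\mathsf A_2$-subsystem. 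First I would work in the basis $\mathcal E$, using the description from Section~\ref{SectionRootlattice}: a vector $\sum z_i e_i$ lies in $\Esix$ iff $z_i\in\ZZ$ and $\sum z_i\equiv 0\pmod 3$, and the Gram matrix $E$ has $e_i^2=8/9$, $e_i^Te_j=-1/9$. A short computation with this Gram matrix shows that the only norm-$2$ vectors are (up to sign) the $e_i-e_j$ for $i\neq j$ and the $e(S)$ for $|S|=3$ (and $e(I_6)-e(S)=e(\overline S)$ falls in the second class, while $e(I_6)$ itself has norm $8\cdot 6/9 - 30/9 = 2$, so $e(I_6)$ is also a root): this recovers the two classes $\mathcal R$ exactly as $\{e_i-e_j\}\cup\{e(S):|S|=3\}$, $72$ roots in total.

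Next I would split the enumeration of the zero-sum triples according to the types of the three roots involved. The combinations to consider are (difference, difference, difference), (difference, triple-sum, triple-sum), (triple-sum, triple-sum, triple-sum), and the mixed cases with two differences and one triple-sum. A direct check of the possible sums, using $\sum z_i\equiv 0\pmod 3$ and the requirement that two of the three roots differ by the third, rules out several combinations: e.g. $(e_i-e_j)+(e_k-e_l)$ is a root only when $\{i,j\}$ and $\{k,l\}$ share exactly one index, giving family (i); $(e_i-e_j)+e(S)$ is a root precisely when exactly one of $i,j$ lies in $S$, and one verifies the third root is then $e(S)-e_i+e_j=e(S')$ with $|S'|=3$, giving family (iii); and three triple-sums $e(S)+e(S')+e(S'')$ vanish against the remaining constraints only when $S,S',S''$ partition $I_6$ into three pairs is \emph{not} what happens — rather one gets $e(S)+e(\overline S)=e(I_6)$, so the only triple of this pure type is $\{e(S),e(\overline S),e(I_6)\}$, giving family (ii). The case of three differences summing to zero is exactly $e_i-e_j, e_j-e_k, e_k-e_i$, already family (i).

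Finally I would count to confirm there are $120$ such triples. Family (i) is indexed by $3$-element subsets $\{i,j,k\}\subseteq I_6$, giving $\binom{6}{3}=20$; family (ii) is indexed by unordered pairs $\{S,\overline S\}$, giving $\binom{6}{3}/2=10$; family (iii) is indexed by a $3$-subset $S$, an element $i\in S$, and an element $j\in\overline S$, but each belt triple $\{e(S), e_i-e_j, e(S')\}$ where $S'=(S\setminus\{i\})\cup\{j\}$ is counted twice this way (once from $(S,i,j)$, once from $(S',j,i)$), so the count is $\tfrac12\cdot 20\cdot 3\cdot 3 = 90$. The total is $20+10+90=120$, matching the number of triangular Delaunay cells / the count asserted in Section~\ref{SectionVoronoi}, which also serves as an independent consistency check that no belt has been missed. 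I expect the main obstacle to be the bookkeeping in the mixed case (two differences, one triple-sum): one must be careful that $e(S)-e_i+e_j$ is genuinely a root of type $e(S')$ and not accidentally a difference or a non-root, and that the belt is not double- or triple-counted across families; organizing the case analysis by the multiset of root-types cleanly avoids this.
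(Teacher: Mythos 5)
Your proposal is correct, and it arrives at the same three families with the same counts ($20+10+90=120$) as the paper; the difference lies in how completeness of the list is established. The paper's own proof is shorter at that step: it notes that each $6$-belt is determined by any of its six $4$-faces, that each $4$-face of $P_V(\Esix)$ lies in exactly one $6$-belt, and that $P_V(\Esix)$ has $720$ $4$-faces (citing Conway--Sloane), so there are exactly $720/6=120$ belts --- matching the $120$ listed triples, which are therefore all of them. You instead prove exhaustiveness directly, by classifying all zero-sum triples of roots (equivalently all $\mathsf A_2$-subsystems of the $\Esix$ root system) in the basis $\mathcal E$ via the norm formula $\lvert\sum z_ie_i\rvert^2=\sum z_i^2-\tfrac19\bigl(\sum z_i\bigr)^2$ and the congruence $\sum z_i\equiv 0\pmod 3$. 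Your route is self-contained (it does not import the $720$ face count) at the price of a longer case analysis; the paper's route is shorter but leans on an external enumeration and uses your cross-check as the actual argument. Both are valid, and you correctly invoke the criterion from Section~\ref{SectionVoronoi} that $F(r)$ and $F(r')$ share a $4$-face iff $r^Tr'=1$, which is what ties root triples to belts. Two cosmetic points: your summary ``$\mathcal R=\{e_i-e_j\}\cup\{e(S):|S|=3\}$, $72$ roots in total'' only reaches $72$ once $\pm e(I_6)$ is counted in (as your parenthetical computation shows, so the substance is fine); and the sentence disposing of the pure triple-sum case is garbled, though the conclusion --- that the only such belt is $\{e(S),e(\overline S),e(I_6)\}$, forced by the coordinate-sum constraint $3\epsilon_1+3\epsilon_2+6\epsilon_3=0$ --- is right.
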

\proof It is easy to verify that there are $20$ triples of type
(i), $10$ triples of type (ii) and 90 triples of type (iii), total $120$
triples.
Each $6$-belt is uniquely determined by each of its six $4$-faces. It is
known (see, for example \cite{CS}) that $P_V(\Esix)$ has $720$
$4$-faces. Since each $4$-face belongs exactly to one $6$-belt,
$P_V(\Esix)$ has $\frac{720}{6}=120$ $6$-belts. \qed

Note that the above $6$-belts form $1$-orbit under the action of the
automorphism group of $\Esix$.

\vspace{2mm}
Call a vector {\em free for a triple} if it is orthogonal at least to one
vector of the triple.
\begin{theor}\label{27a}
The Voronoi polytope $P_V(\Esix)$ is free along a line $l$ if and
only if $l$ is spanned by a minimal vector $q\in{\mathcal M}$ of the dual
lattice $\Esix^*$, described in \eqref{min}.
\end{theor}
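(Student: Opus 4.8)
The plan is to use Theorem~\ref{bfr}: a vector $v$ is free for $P_V(\Esix)$ if and only if $v$ is orthogonal to at least one facet vector in each of the $120$ triples listed in Proposition~\ref{tri}. So the whole statement reduces to solving a combinatorial/linear-algebra problem: determine exactly which vectors $v$ satisfy this ``free for every triple'' condition, and show the solution set is precisely the union of lines $\RR q$ for $q\in\mathcal M$.

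First I would establish sufficiency: every $q\in\mathcal M$ is free for each triple. Here it is convenient to use the geometric description from Section~\ref{SectionVoronoi}: $q$ is orthogonal to the cross-polytopal facet $\beta_5(q)$ of $P_{Schl}$, equivalently $q^Tr = 0$ for every $r\in\mathcal R$ of the form $p-p'$ with $p,p'$ both in $T(q)$. More usefully, I would recall that the facet vectors $F(r)$ of $P_V(\Esix)$ are the $36$ (pairs of) roots $r$ with $r^2=2$, and a $6$-belt consists of $F(\pm r),F(\pm r'),F(\pm r'')$ with $r=r'+r''$. For a fixed $q\in\mathcal M$ the roots $r$ with $q^Tr\neq 0$ are exactly those $r$ with $q^Tr=\pm 1$; among any belt triple $\{r',r'',r'+r''\}$ one cannot have $q^Tr'=q^Tr''=\pm1$ with all signs forcing $q^T(r'+r'')=\pm2$ (impossible since $|q^Tr|\le 1$ for roots $r$ and $q\in\mathcal M$), so at least one of the three has $q^Tr=0$. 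I would make the inequality $|q^Tr|\le 1$ precise using $q^2=4/3$ (for $q=a_i$) or the Cauchy–Schwarz-type bound together with the integrality of $q^Tr\in\ZZ$ that follows from $q\in\Esix^*$, $r\in\Esix$; then the statement ``$q^T(r'+r'')\in\{-2,0,2\}$ is impossible to be $\pm2$ when both summands are $\pm1$'' needs a short check that the two $\pm1$'s must then have the same sign on the belt, which I would verify case by case on the three types (i), (ii), (iii) of Proposition~\ref{tri} (or uniformly, since all belts form one orbit).

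Next, necessity: suppose $v$ is free, i.e.\ orthogonal to one vector in each of the $120$ triples. I would argue that the type-(i) triples $\{e_i-e_j,\,e_j-e_k,\,e_k-e_i\}$ already force strong structure on $v$: writing $v=\sum x_i e_i$ in the basis $\mathcal E$ (using the Gram matrix $E$ with $e_{ii}=8/9$, $e_{ij}=-1/9$), orthogonality to $e_i-e_j$ is the condition $(Ev)_i=(Ev)_j$, and demanding that for each triple $\{i,j,k\}$ at least one coordinate-difference of $Ev$ vanishes forces the six numbers $(Ev)_1,\dots,(Ev)_6$ to take at most two distinct values (if three distinct values appeared, pick one index from each value class to get a violated triple). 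Then the type-(ii) and type-(iii) triples cut this down further. With $(Ev)$ supported on a partition $I_6=S\sqcup\overline S$ into two blocks with constant values $\alpha$ on $S$ and $\gamma$ on $\overline S$, I would plug into the type-(ii) conditions $v^Te(S)=0$ or $v^Te(\overline S)=0$ or $v^Te(I_6)=0$ and the type-(iii) conditions, and show the only solutions (up to scaling and the $W(\Esix)$-symmetry) are exactly those with $|S|\in\{1,2\}$, matching $v$ proportional to $e_i$-type, resp.\ the $b_i,c_{ij}$ forms — i.e.\ a vector of $\mathcal M$. Since all $120$ belts form a single orbit under $\Aut(\Esix)=W(\Esix)$ and $\mathcal M$ is also a single orbit, I expect the bookkeeping to collapse nicely: it suffices to run the argument for one representative block-partition of each size and invoke symmetry.

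The main obstacle I anticipate is the necessity direction, specifically making sure that ruling out the ``wrong'' two-valued vectors $Ev$ is exhaustive: one must check that for \emph{every} partition into blocks of sizes $(3,3)$, $(4,2)$, $(5,1)$, $\dots$ the type-(ii)/(iii) orthogonality conditions cannot all be met unless $v\in\RR\mathcal M$, and that the block sizes $\{1\}$ and $\{2\}$ genuinely produce the $27$ lines and nothing extra. This is a finite but slightly delicate case analysis; I would organize it by the size of the smaller block and exploit that a type-(iii) triple with $S$ the larger block and $i\in S$, $j\notin S$ gives $v^Te(S)=0$ or $v^T(e_i-e_j)=0$ or $v^T(e(S)-e_i+e_j)=0$, each of which, combined with the two-valued structure, pins down the ratio $\alpha/\gamma$ and hence the norm and the membership in $\mathcal M$. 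Once the algebra is done, matching the resulting vectors to the explicit list $\{a_i,b_i,c_{ij}\}$ of \eqref{min} is routine given the formulas $b_i=a_i-h$, $c_{ij}=h-a_i-a_j$ and the dual Gram matrix $E$.
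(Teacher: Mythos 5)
Your proposal is correct and follows essentially the same route as the paper: reduce via Theorem~\ref{bfr} to the orthogonality condition on the $120$ triples of Proposition~\ref{tri}, then show the dual-basis coordinates of a free vector take at most two values (forced by the type-(i) triples) and pin down the admissible block sizes and coordinate ratios with the type-(ii) and type-(iii) triples, exactly as in the paper's Claims 1--4. The one small improvement is your sufficiency argument: since $q\in\Esix^*$ and $r\in\Esix$ give $q^Tr\in\ZZ$ with $|q^Tr|\le\sqrt{8/3}<2$, the additivity $q^T(r'+r'')=q^Tr'+q^Tr''$ immediately forces a zero in each triple (no sign check is actually needed), which is cleaner than the paper's triple-by-triple verification of each candidate in $\mathcal M$.
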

\proof We seek a vector $a$ which is free for $P_V(\Esix)$ in the
basis ${\mathcal A}=\{a_i:i\in I_6\}$ related to the lattice
$\Esix^*$ which is dual to the basis ${\mathcal E}=\{e_i:i\in I_6\}$
related to the lattice $\Esix$. So, let $a=\sum_{i\in I_6}z_ia_i$ be
a vector which is free for $P_V(\Esix)$. We find conditions, when the
vector $a$ is orthogonal to at least one vector of each triple of types (i), (ii) and (iii) of Proposition~\ref{tri}. We shall see that $a$ is, up to a
multiple, one of the vectors $a_i,b_j,c_{kl}$, $i,j,k,l\in I_6$, of
\eqref{abc}.

{\bf Claim 1}. {\em The coordinates $z_i, i\in I_6$ take only two values}.
Suppose that there are three pairwise distinct coordinates
$z_i\not=z_j\not=z_k\not=z_i$. Then the vector $a$ is not free for a triple
of type (i).

So, a free vector has the form $a_T=za(T)+z'a({\overline T})$, where
$T\subseteq I_6$, ${\overline T}=I_6-T$ and $a(T)=\sum_{i\in T}a_i$.

{\bf Claim 2}. {\em If $z=0$ or $z'=0$, then $|{\overline T}|=1$, or
$|T|=1$, respectively}. In fact, if $z'=0$ and $|T|\ge 2$, then the vector
$a=za(T)$ is not free for each triple of type (ii) such that
$S\cap T\not=\emptyset$ and $S\cap\overline{T}\not=\emptyset$.

Note that Claim 2 implies that $T\not=\emptyset$ and $T\not=I_6$.

It is easy to verify that each of the six vectors $a_i$, $i\in I_6$, (which
are minimal vectors of $\Esix^*$) is free for all triples.

Now consider vectors $a_T=za(T)+z'a({\overline T})$, with both non-zero
coefficients $z$ and $z'$.

{\bf Claim 3}. {\em $|T|\not=3$}. In fact, let $|T|=3$. Consider a triple of
type (ii) for $S=T$. The vector $a_T$ is free for this triple only if
$e(I_6)^Ta_T=z+z'=0$. Hence, $a_T$ should take the form
$a_T=z(a(T)-a({\overline T}))$. But this vector is not free for a triple
of type (iii) with $S=T$ and $i\in T$, $j\not\in T$.

So, without loss of generality we can consider vectors $a_T$ such that
$|T|=1,2$.

{\bf Claim 4}. {\em $z=-2z'$}. Let $|T|=1$. For $a_T$ to be free for a
triple of type (ii) with $S\supset T$, the coefficients $z,z'$ should
satisfy one of the equalities $z+2z'=0$ or $z+5z'=0$. If $z=-5z'$, then
$a_T$ is not free for a triple of type (iii) such that $\{i\}=T$. Hence,
$z=-2z'$. It is easy to verify that, for all $i\in I_6$, the vector
$a_T=-z'(2a_i-a(I_6-\{i\}))=-3z'(a_i-\frac{1}{3}a(I_6))=-3z'b_i$ is free
for all triples.

Now, let $|T|=2$. For $a_T$ to be free for a triple of type (ii) with
$S\supset T$, the coefficients $z$ and $z'$ should satisfy one of the
equalities $2z+z'=0$ or $2z+4z'=0$. If $z'=-2z$, then the vector
$a_T=z(a(T)-2a({\overline T}))$ is not free for a triple of type (iii)
such that $S\cap T=\{i\}$ and $j\not\in T$. One can verify that if
$z=-2z'$ and $T=\{ij\}$, then, for $1\le i<j\le 6$, the vector
$a_T=3z'(a_i+a_j-\frac{1}{3}a(I_6))=3z'c_{ij}$ is free for triples of
all types. \qed

\section{A sum of $P_V(\Esix)$ with a zonotope}\label{SectionE6zonotope}

Our main goal is to classify all subsets $U \subset \mathcal M$ such that $P+Z(U)$ is a parallelotope. If $P+Z(U)$ is a parallelotope,
we call $U$ {\it feasible}, otherwise $U$ is called {\it forbidden}.

We say that that $U_0\subset \mathcal M$ is a {\it minimal forbidden set} for $P_V(\Esix)$, if $P_V(\Esix) + Z(U_0)$ is not a parallelotope, 
but $P_V(\Esix) + Z(U_1)$ is a parallelotope for every $U_1 \subsetneq U_0$. Obviously, every forbidden set $U \subset \mathcal M$ contains some minimal 
forbidden subset $U_0$.

Let $U_0\subset \mathcal M$ be a minimal forbidden set for $P_V(\Esix)$.
Choose an arbitrary element $p_0 \in U_0$ and let $U_1 = U_0 \setminus \{p_0\}$ be the {\it pre-forbidden set}. Notice that $p_0$ is not a free
vector for the parallelotope $P_V(\Esix) + Z(U_1)$. Hence by Theorem~\ref{bfr} $P_V(\Esix) + Z(U_1)$ has a $6$-belt such that $p_0$ is not parallel to
any facet of that belt. This motivates us to study $6$-belts of $P_V(\Esix) + Z(U_1)$.

Lemma~\ref{lem:st_vec_hered} implies that each standard vector $s$ of $P_V(\Esix) + Z(U_1)$ can be represented in the form
$$s = s' + \sum\limits_{p\in U_1} 2n_p(s') p,$$
where $s'$ is a standard vector of $P_V(\Esix)$. In this situation we say that $s$ {\it corresponds} to $s'$ and vice versa. 

Thus every facet vector of $P_V(\Esix) + Z(U_1)$ arises from some standard vector of $P_V(\Esix)$. Moreover, from
Lemma~\ref{lem:depend_hered} follows that the 3 pairs vectors of each $6$-belt of $P_V(\Esix) + Z(U_1)$ arise from 3 pairs of coplanar standard vectors of
$P_V(\Esix)$.

So, for what follows $2$-dimensional subsets of the set ${\mathcal R}\cup{\mathcal T}$ are important. Each of these subsets generates a $2$-plane $\alpha$, and 
it is the intersection $({\mathcal R}\cup{\mathcal T}) \cap \alpha$. 

Now we state several lemmas that are necessary to describe all minimal forbidden sets for $P_V(\Esix)$. Their proofs are technical, and therefore transfered to
the Appendix B.

\begin{lem}\label{L2}
Let $\alpha$ be a two-dimensional plane such that the intersection  
$$(\mathcal R \cup \mathcal T) \cap \alpha$$
consists of at least 6 vectors, i.e., at least three pairs of antipodal vectors. Then this intersection is equivalent to one of the following 5 planar sets
up to the action of $W(\Esix)$. 

\begin{enumerate}
\item[(a)] $\{\pm(a_2 - a_1), \pm(b_1 - a_1), \pm(a_2 - b_1), \pm(b_2 - a_1)\}$; 
\item[(b)] $\{\pm(a_2 - a_1), \pm(a_3 - a_1), \pm (a_3 - a_2)\}$; 
\item[(c)] $\{\pm(b_2 - a_1), \pm(c_{12} - a_1), \pm(c_{12} - b_2)\}$; 
\item[(d)] $\{\pm(b_2 - a_1), \pm(b_4 - a_3), \pm(c_{23} - c_{14})\}$; 
\item[(e)] $\{\pm(b_2 - a_1), \pm(b_3 - a_1), \pm(b_3 - b_2)\}$. 
\end{enumerate}
\end{lem}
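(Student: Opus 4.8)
The plan is to show that any such plane $\alpha$ must contain at least three pairs of antipodal vectors from $\mathcal R \cup \mathcal T$, and that each vector in $\mathcal R \cup \mathcal T$ is a difference $p - p'$ of two vertices of $P_{Schl}$; so a plane $\alpha$ meeting $\mathcal R \cup \mathcal T$ in $\geq 3$ antipodal pairs corresponds to a configuration of differences of points of $\mathcal M$ lying in a $2$-dimensional subspace. First I would set up the bookkeeping: each element of $\mathcal R$ has the form $p - p'$ with $p,p' \in \mathcal M$, $p^Tp' = 1/3$, and each element of $\mathcal T$ has the form $p - p'$ with $p^Tp' = -2/3$; in the latter case, as noted in Section~\ref{SectionVoronoi}, $-p-p' =: q \in \mathcal M$ as well, so $\mathcal T$-vectors are really differences $a(t) - (-b(t))$ hidden inside triangle relations $a_i + b_j + c_{ij} = 0$. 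I would then observe that if $v_1, v_2, v_3$ are three of the antipodal pairs in $\alpha$ (taking one representative each), then they satisfy a single linear relation $\lambda_1 v_1 + \lambda_2 v_2 + \lambda_3 v_3 = 0$, and because all the $v_i$ are differences of unit-norm-ish vectors with scalar products restricted to $\{4/3, 1/3, -2/3\}$, the coefficients $\lambda_i$ can only be $\pm 1$ (as in the proof of Theorem~\ref{27a}, where the $6$-belt triples all satisfy $r + r'' = r'$ or $e(S) + e(\overline S) = e(I_6)$ type relations). So $\alpha$ is spanned by three vectors summing (up to signs) to zero.

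Next I would enumerate, up to the action of $W(\Esix)$, the ways to pick three such mutually-dependent vectors from $\mathcal R \cup \mathcal T$. Since $W(\Esix)$ acts transitively on $\mathcal R$ (the roots) and on $\mathcal T$ and on $\mathcal M$, I may normalize the first vector. The case analysis splits on how many of the three vectors lie in $\mathcal R$ versus $\mathcal T$: (0 in $\mathcal T$) gives a closed system of roots in a $2$-plane, i.e.\ an $\mathsf A_2$ root subsystem, which is case (b); (1 in $\mathcal T$, 2 in $\mathcal R$) and (2 in $\mathcal T$, 1 in $\mathcal R$) and (3 in $\mathcal T$) give the remaining cases. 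For each, I would use the $a_i, b_j, c_{ij}$ coordinates: a $\mathcal T$-vector is $a_i - b_j$ or $a_i - (-c_{jk})$ etc., an $\mathcal R$-vector is $a_i - a_j$, $b_i - a_j$, $c_{ij} - a_k$, and so on. Normalizing so that one vector is $b_2 - a_1$ (respectively $a_2 - a_1$), I would run through which second vector can be added so that the third (their difference or sum) again lands in $\mathcal R \cup \mathcal T$; the constraint "$p^Tp'$ takes only the values $4/3, 1/3, -2/3$" severely limits the possibilities, and the $27$-line incidence combinatorics from Section~\ref{SectionRootlattice} (two skew lines lie on a unique common transversal, each line meets exactly $10$ others) organizes the count. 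This should yield exactly the five configurations (a)--(e), and one checks each actually spans a $2$-plane containing no further vectors of $\mathcal R \cup \mathcal T$ — for (a) the plane contains four antipodal pairs (it is a "$\mathsf B_2$-like" configuration coming from a quadrangular Delaunay $2$-cell), for (b)--(e) exactly three.

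The main obstacle I anticipate is the completeness of the case analysis: one must be sure that no two of the listed five planes are $W(\Esix)$-equivalent and that the list is exhaustive. Equivalence-checking is cheap via invariants — the number of antipodal pairs in the plane (four for (a), three for the rest), and, among (b)--(e), the multiset of types ($\mathcal R$ vs.\ $\mathcal T$) of the three pairs and the Gram data of a spanning pair — but the exhaustiveness requires care, because "three dependent vectors of $\mathcal R \cup \mathcal T$" is not obviously a finite-check-able condition until one has pinned down that the dependence coefficients are $\pm 1$. I would therefore isolate that reduction as the first real step, proving it by the same scalar-product argument used for Claims~1--4 in the proof of Theorem~\ref{27a}: write the relation in the self-dual coordinates, take inner products with each of the three vectors, and note that the resulting $3\times 3$ Gram system forces $\lambda_i \in \{\pm 1\}$. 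After that, the remainder is a finite (if tedious) walk through the $\mathsf E_6$ root/weight combinatorics, which is exactly the kind of computation the paper defers to Appendix~B.
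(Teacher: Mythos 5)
Your central reduction fails. You claim that any three pairwise non-parallel representatives $v_1,v_2,v_3$ of the antipodal pairs in $(\mathcal R\cup\mathcal T)\cap\alpha$ satisfy a relation $\lambda_1v_1+\lambda_2v_2+\lambda_3v_3=0$ with $\lambda_i=\pm1$, and you propose to prove this from the $3\times 3$ Gram system. But this is false already inside configuration (a) of the lemma: since $b_i=a_i-h$, one has
$$(a_2-b_1)+(b_2-a_1)=2(a_2-a_1),$$
where $a_2-b_1$ and $b_2-a_1$ lie in $\mathcal T$, $a_2-a_1$ lies in $\mathcal R$, and all three lie in the plane of case (a). So the dependence coefficients can be $(1,1,-2)$, no Gram computation can force $\lambda_i\in\{\pm1\}$, and the step that was supposed to make your enumeration finite is unproved. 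One could try to salvage the plan by showing that \emph{some} three of the pairs always form a unimodular triple, but that needs an argument you have not supplied; and even then, classifying triples is not the same as classifying planes --- distinct unimodular triples span the same plane (e.g. $\{a_2-a_1,\,b_1-a_1,\,a_2-b_1\}$ and $\{a_2-a_1,\,b_1-a_1,\,b_2-a_1\}$ both span the plane of (a)), and you must still determine the full intersection $(\mathcal R\cup\mathcal T)\cap\alpha$ for each plane, including ruling out extra pairs in cases (b)--(e); you only gesture at this.

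The paper's proof avoids both difficulties by a different, more structural route: every vector of $(\mathcal R\cup\mathcal T)\cap\alpha$ is minimal for the rank-two lattice $\Esix\cap\alpha$, whose Delaunay tessellation is either triangular or rectangular. In the triangular case the three pairs are the edge vectors of an acute Delaunay triangle with side lengths in $\{\sqrt2,2\}$, which leaves only the shapes $(\sqrt2,\sqrt2,\sqrt2)$, $(2,2,2)$ and $(2,2,\sqrt2)$, producing cases (b), (c)/(d) and (e); in the rectangular case an empty-sphere argument identifies the rectangle as a planar section of a cross-polytopal Delaunay cell, yielding case (a) with its four pairs. This dichotomy is precisely what makes the list provably exhaustive and controls how many pairs each plane contains --- the two points your plan leaves open. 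The scalar-product bookkeeping you describe (each $p^Tp'\in\{4/3,1/3,-2/3\}$) does occur in the paper, but only after this reduction, and it is carried out there in full rather than deferred.
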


\begin{lem}\label{lem:transversal_to_m_edge}
Let $p \in \mathcal M$.
Then the direct Minkowski sum $[a_2, -b_1] \oplus z(p)$ is a standard 2-face of the parallelotope $P_V(\Esix) + z(p)$ iff
$$p \in \{ a_1, b_2, c_{34}, c_{35}, c_{36}, c_{45}, c_{46}, c_{56} \}.$$
\end{lem}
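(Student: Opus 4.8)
The statement is about the $m$-edge $[a_2,-b_1]$ of $P_V(\Esix)$, whose standard vector is $t = a_2+b_1 \in \mathcal T$ (up to sign), and we want to know for which $p\in\mathcal M$ the segment $z(p)$ is strongly transversal to this edge, so that $[a_2,-b_1]\oplus z(p)$ becomes a standard $2$-face of $P_V(\Esix)+z(p)$. The plan is to combine three ingredients: (1) $p$ must be a free vector of $P_V(\Esix)$, which by Theorem~\ref{27a} is automatic since $p\in\mathcal M$; (2) $z(p)$ must be \emph{strongly transversal} to the edge $[a_2,-b_1]$, i.e.\ $p\notin\Lin([a_2,-b_1])$ and $[a_2,-b_1]+z(p)$ is genuinely a $2$-face of $P_V(\Esix)+z(p)$ rather than a translate of the edge; (3) the resulting $2$-face must be \emph{standard}, i.e.\ shared facet-to-facet with a neighbouring tile. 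First I would dispose of the trivial obstruction: $p\in\Lin([a_2,-b_1])$ forces $p$ collinear with $a_2+b_1$, which is impossible for $p\in\mathcal M$ since $a_2+b_1$ has norm $>2$ while $p$ has norm $2/3$... — more precisely one checks $a_2+b_1$ is not a scalar multiple of any element of $\mathcal M$ — so every $p\in\mathcal M$ is transversal to the line of the edge.

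The heart of the argument is deciding, among the transversal $p$, which give a \emph{standard} $2$-face. By the classification of $U_1(G), U_2(G), U_3(G)$ in the introduction applied to $G=[a_2,-b_1]$, the segment $z(p)$ extends the edge to a new face exactly when $p\in U_3(G)$, i.e.\ when $p$ is strongly transversal. The cleanest route is to use Lemma~\ref{lem:st_vec_hered}: $A_p t$ is a standard vector of $P_V(\Esix)+z(p)$ iff $t$ is a standard vector of $P_V(\Esix)$ (which it is, being the standard vector of the $m$-edge) \emph{and} $n_t(p)\in\{0,\pm1\}$, where $n_t$ is the linear functional from Lemma~\ref{lem:operator} attached to the dual direction of $t$. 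But the $2$-face we want is the face of $P_V(\Esix)+z(p)$ dual to the new Delaunay cell $D([a_2,-b_1])\oplus[\,\cdot\,]$; so I would instead argue directly in the Delaunay picture. The edge $[a_2,-b_1]$ is a diagonal of a facet $F$ of the Schl\"afli Delaunay polytope $P_{Schl}$ (it is a long diagonal of a cross-polytopal facet $\beta_5$ in the combinatorics described in Section~\ref{SectionVoronoi}); adding $z(p)$ corresponds in the dual tiling to forming a new centrally symmetric Delaunay cell by translating that facet, and this is consistent with the $\Esix$ tiling precisely when $p$ lies in a specific affine position relative to the two $\beta_5$'s meeting along the corresponding $6$-belt.

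Concretely I expect the computation to reduce to the following: write $q(t)=-a_2-b_1=-a_2-(a_1-h)=h-a_1-a_2=c_{12}$ (using $b_1=a_1-h$), so the $m$-edge in question has associated triple $a_2, -b_1, q(t)$ with $q(t)=c_{12}$, matching the notation $q(t)=-a(t)-b(t)$ from Section~\ref{SectionVoronoi}. The condition for $z(p)$ to extend this $m$-edge to a standard $2$-face should be an orthogonality/scalar-product condition between $p$ and the facet vectors of the $6$-belts through the two facets of $P_V(\Esix)$ sharing the edge — by Theorem~\ref{bfr} and the $6$-belt list of Proposition~\ref{tri}, $p$ must be orthogonal to at least one vector of each such belt, \emph{and} the extension must not merely translate the edge. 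Enumerating the $6$-belts incident to $[a_2,-b_1]$ and imposing these constraints, one finds the admissible $p$ are exactly those with a prescribed pattern of inner products with $a_1,a_2,b_1,b_2$; I would then verify by direct inspection of the $27$ vectors in $\mathcal M$ (using $a_i^Ta_j=1/3$, $a_i^Ta_i=4/3$, and the induced products for $b_i,c_{ij}$) that this pattern is satisfied precisely by
$$\{a_1,\, b_2,\, c_{34},\, c_{35},\, c_{36},\, c_{45},\, c_{46},\, c_{56}\}.$$
The main obstacle will be step (3): distinguishing a genuine strongly-transversal extension (new $2$-face, standard) from the cases where $z(p)$ either lies in $\Lin(G)$ (impossible here) or is in $U_1(G)$ and merely translates the edge — equivalently, ruling out the $p$ for which $[a_2,-b_1]\oplus z(p)$ fails to be shared facet-to-facet with a neighbour. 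I anticipate this is where one must appeal carefully to the local analysis behind Theorem~\ref{bfr} (the $3$-dimensional fan classification) applied at the $(n-3)=4$-faces bounding the edge, rather than to any purely numerical scalar-product count; the rest is the bookkeeping of inner products among the $27$ lines, which is routine given the configuration data recalled in Sections~\ref{SectionRootlattice}--\ref{SectionVoronoi}.
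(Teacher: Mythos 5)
There is a genuine gap: your proposal sets up the problem correctly (identifying $[a_2,-b_1]$ as an $m$-edge, computing $q(t)=c_{12}$, and isolating the need to decide strong transversality and standardness), but it never actually derives the criterion that selects the eight vectors. You write that "one finds the admissible $p$ are exactly those with a prescribed pattern of inner products" without stating the pattern or the mechanism that produces it, and you defer the decisive step --- distinguishing $p\in U_3(G)$ (genuine extension to a new standard $2$-face) from $p\in U_1(G)$ (mere translation of the edge) --- to an unspecified "careful appeal" to the fan classification behind Theorem~\ref{bfr}. That tool is not the right one here: the fan analysis of Appendix~A concerns $(n-3)$-faces being extended to $(n-2)$-faces, whereas this lemma extends a $1$-face to a $2$-face; and the orthogonality-to-one-facet-of-each-$6$-belt condition you invoke is exactly the freeness condition of Theorem~\ref{27a}, which \emph{every} $p\in\mathcal M$ satisfies and therefore cannot separate the eight admissible vectors from the other nineteen.

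The paper's proof rests on a piece of linear algebra you do not have. Writing $q=a_1-b_1$ and $q'=a_2-a_1$ for the facet vectors of the two facets $F,F'$ of \eqref{eq:facets}, one has $t=q+q'$ for the standard vector $t=a_2-b_1$ of the edge (note your $t=a_2+b_1$ is a sign slip; also $(n-3)=3$, not $4$). If $[a_2,-b_1]\oplus z(p)$ is a standard $2$-face, then $A_pt=t$ while $A_pq$, $A_pq'$ are facet vectors of $P_V(\Esix)+z(p)$, so Lemma~\ref{lem:operator} forces $e_p^Tq=-e_p^Tq'$. This dichotomy --- either $p$ is parallel to both $F$ and $F'$ (giving $c_{34},\dots,c_{56}$ after discarding $c_{12}$, which is parallel to the edge), or $p$ crosses one facet inward and the other outward (which among $\mathcal M$ happens only for $a_1$ and $b_2$) --- is what produces the list. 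Sufficiency is then checked by verifying that $t$ remains a vector of $\Lambda(P_V(\Esix)+z(p))$, so that the tiles $P_V(\Esix)+z(p)$ and $P_V(\Esix)+t+z(p)$ meet precisely in $[a_2,-b_1]\oplus z(p)$. None of this appears in your proposal, so as written it does not constitute a proof.
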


\begin{lem}\label{lem:six_belts}
Let $U \subset \mathcal M$ and let $P_V(\Esix) + Z(U)$ be a parallelotope. Then
\begin{enumerate}
\item[\rm 1.] $P_V(\Esix) + Z(U)$ necessarily has a 6-belt with facet vectors corresponding to 
$$\{\pm(a_2 - a_1), \pm(a_3 - a_1), \pm (a_3 - a_2)\}.$$
\item[\rm 2.] $P_V(\Esix) + Z(U)$ has a 6-belt with facet vectors corresponding to 
$$\{\pm(a_2 - a_1), \pm(b_1 - a_1), \pm(b_2 - a_1)\},$$ 
iff $U$ contains a subset $U' = \{p_1, p_2, p_3, p_4\}$, where
$$p_1 \in \{c_{34}, c_{56}\}, \; p_1 \in \{c_{35}, c_{46}\}, \; p_3 \in \{c_{36}, c_{45}\}, \; p_4 \in \{a_2, b_1\}.$$
\item[\rm 3.] $P_V(\Esix) + Z(U)$ has a 6-belt with facet vectors corresponding to 
$$\{\pm(b_2 - a_1), \pm(b_3 - a_1), \pm(b_3 - b_2)\},$$ 
iff $U$ contains a subset $\{b_1, c_{45}, c_{46}, c_{56}\}$.
\item[\rm 4.] $P_V(\Esix) + Z(U)$ has no $6$-belt with facet vectors corresponding to
$$\{\pm(a_2 - a_1), \pm(b_1 - a_2), \pm(b_2 - a_1)\}.$$
\item[\rm 5.] $P_V(\Esix) + Z(U)$ has no $6$-belt with facet vectors corresponding to
$$\{\pm(b_2 - a_1), \pm(c_{12} - a_1), \pm(c_{12} - b_2)\}.$$
\item[\rm 6.] $P_V(\Esix) + Z(U)$ has no $6$-belt with facet vectors corresponding to
$$\{\pm(b_2 - a_1), \pm(b_4 - a_3), \pm(c_{23} - c_{14})\}.$$
\end{enumerate}
\end{lem}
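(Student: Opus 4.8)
The plan is to reduce the statement to a finite case analysis over the five planar types produced by Lemma~\ref{L2}. By the discussion preceding this lemma, together with Lemmas~\ref{lem:operator}, \ref{lem:depend_hered} and \ref{lem:st_vec_hered}, a $6$-belt of $P_V(\Esix)+Z(U)$ is recorded by three pairs of facet vectors which are the images, under the linear operator $A_U=A_{p_k}\circ\dots\circ A_{p_1}$ with $U=\{p_1,\dots,p_k\}$, of three pairs of \emph{coplanar} standard vectors $\{\pm s_1,\pm s_2,\pm s_3\}$ of $P_V(\Esix)$; each facet of the belt is either a translate or extension of a facet $F(r)$ of $P_V(\Esix)$ (when $s_i=r\in{\mathcal R}$) or an extension of an $m$-edge of $P_V(\Esix)$ (when $s_i\in{\mathcal T}$). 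Since $A_U$ is linear and invertible, such a belt exists if and only if (1) $\varepsilon_1 s_1+\varepsilon_2 s_2+\varepsilon_3 s_3=0$ for some signs $\varepsilon_i$; (2) each $s_i\in{\mathcal T}$ is \emph{liftable}, i.e.\ the $m$-edge with standard vector $s_i$ is the base of a flag $e\subset G_2\subset G_3\subset G_4\subset G_5$ of faces of $P_V(\Esix)$ in which each $G_{j+1}$ is obtained from $G_j$ by a strongly transversal vector lying in $U$; and (3) the resulting six facets are arranged around a common $(n-2)$-face. The plane spanned by $\{s_1,s_2,s_3\}$ meets ${\mathcal R}\cup{\mathcal T}$ in at least three pairs, so by Lemma~\ref{L2} it is $W(\Esix)$-equivalent to one of the types (a)--(e), and it remains to run through the admissible sub-triples of each.

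The two ``pure'' cases are quick. In a plane of type (b) all three vectors $a_2-a_1,\ a_3-a_1,\ a_3-a_2$ lie in ${\mathcal R}$; the three root facets of $P_V(\Esix)$ are carried (translated or extended) to facets of $P_V(\Esix)+Z(U)$ for \emph{any} $U$, their facet vectors $A_U(a_i-a_j)$ still obey a signed-sum-zero relation and remain distinct up to sign, hence they form a $6$-belt --- this is part~1. In a plane of type (a), the one sub-triple that is not of the form $\{\pm s_1,\pm s_2,\pm(s_1\pm s_2)\}$, namely $\{a_2-a_1,\,b_1-a_2,\,b_2-a_1\}$, admits no vanishing signed combination (a direct check using $b_i=a_i-h$), so condition~(1) fails and no such $6$-belt can occur --- this is part~4.

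The remaining cases require the liftability analysis. For each $m$-edge appearing in a sub-triple I would enumerate, up to its stabilizer in $W(\Esix)$, all flags of strongly transversal extensions by vectors of ${\mathcal M}$, using Lemma~\ref{lem:transversal_to_m_edge} for the first step and its analogues for the passage to $3$-, $4$- and $5$-faces; this pins down exactly which $4$-element subsets of $U$ lift the $m$-edge to a facet and which of those facets is then compatible, in the sense of~(3), with the root-facets of the belt. Carrying this out for the type-(a) sub-triple $\{a_2-a_1,\,b_1-a_1,\,b_2-a_1\}$ (one $m$-edge, standard vector $b_2-a_1$) produces the four independent choices $p_1\in\{c_{34},c_{56}\}$, $p_2\in\{c_{35},c_{46}\}$, $p_3\in\{c_{36},c_{45}\}$, $p_4\in\{a_2,b_1\}$ of part~2; for the type-(e) triple $\{b_2-a_1,\,b_3-a_1,\,b_3-b_2\}$ (two $m$-edges) the two liftings together force precisely $\{b_1,c_{45},c_{46},c_{56}\}\subseteq U$, giving part~3; and for the type-(c) triple (part~5) and the type-(d) triple (part~6), in both of which all three standard vectors lie in ${\mathcal T}$, the enumeration shows that no choice of flags for the three $m$-edges can be realized inside one unimodular $U$ while leaving the six facets arranged in a single belt, so those $6$-belts never occur.

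The main obstacle is precisely this flag enumeration: for every relevant $m$-edge one must describe, modulo $W(\Esix)$, all chains of strongly transversal extensions by vectors of ${\mathcal M}$ up to the facet level and then verify belt-compatibility of the resulting facets, which is where the detailed geometry of $P_{Schl}$ --- its face lattice, cf.\ Table~\ref{FirstTable} and Lemmas~\ref{TJ}, \ref{L2}, \ref{lem:transversal_to_m_edge} --- is used heavily. This is the portion that, following the authors, I would carry out in the appendix.
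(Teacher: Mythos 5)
Your overall strategy --- classify the plane of the three coplanar standard vectors via Lemma~\ref{L2}, dispose of the pure-root plane and of the triple admitting no vanishing signed combination, and then analyse transversal extensions of the $m$-edges --- is the same skeleton as the paper's proof, and your treatment of parts~1 and~4 is sound. For part~4 you even take a cleaner route than the paper: the paper argues that a belt in the direction $\left\langle a_2-a_1,\,b_1-a_1\right\rangle^{\bot}$ must contain the facets with vectors $\pm(b_1-a_1)$, whereas you observe directly that $\varepsilon_1(a_2-a_1)+\varepsilon_2(b_1-a_2)+\varepsilon_3(b_2-a_1)=(\varepsilon_1-\varepsilon_2+\varepsilon_3)(a_2-a_1)-(\varepsilon_2+\varepsilon_3)h$ cannot vanish for signs $\varepsilon_i=\pm1$, and that $A_U$ being linear and invertible transports this obstruction to $P_V(\Esix)+Z(U)$. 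Both arguments are valid.

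However, for assertions~2, 3, 5 and~6 --- which carry essentially all the content of the lemma --- you do not give a proof; you state that a ``flag enumeration'' \emph{would produce} the sets $\{c_{34},c_{56}\}\times\{c_{35},c_{46}\}\times\{c_{36},c_{45}\}\times\{a_2,b_1\}$ and $\{b_1,c_{45},c_{46},c_{56}\}$ and \emph{would show} the impossibility in parts~5 and~6, without deriving any of it. Worse, the machinery you propose to run is not available: Lemma~\ref{lem:transversal_to_m_edge} only handles the first step (edge to $2$-face), and the ``analogues for the passage to $3$-, $4$- and $5$-faces'' that your flag enumeration requires are nowhere established and would themselves need proof. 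The paper avoids needing them. For part~2 it works with the two root-facets $F,F'$ whose intersection $[a_2,-b_1]$ must grow to a $4$-face inside $\left\langle a_2-a_1,\,b_1-a_1\right\rangle^{\bot}\cap{\mathcal M}$, which forces one vector from each of the three pairs, and then applies Lemma~\ref{lem:transversal_to_m_edge} once more for $p_4$. For part~3 it invokes Lemma~\ref{lem:st_vec_hered} to see that if a standard $m$-edge expands to a standard facet then every intermediate expansion is standard, so Lemma~\ref{lem:transversal_to_m_edge} (transported by $W(\Esix)$) applies one vector at a time; intersecting the two admissible $8$-element sets gives exactly $\{b_1,c_{45},c_{46},c_{56}\}$. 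Part~5 follows the same pattern with \emph{three} $m$-edges and an empty triple intersection, and part~6 is killed by the simple observation that the three $m$-edges $\pm[a_1,-b_2]$, $\pm[a_3,-b_4]$, $\pm[c_{23},-c_{14}]$ are pairwise disjoint, so their extensions are pairwise disjoint, whereas facets of one belt cannot be. Your proposal misses these short structural arguments and leaves the decisive computations undone, so as it stands it is an outline rather than a proof.
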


Now we are ready to state and prove the main result.

\begin{theor}
\label{PUP}
Let $U \subset \mathcal M$. The following assertions are equivalent.
\begin{enumerate}
\item[(i)] There exists a subset $U' \subseteq U$ such that $|U'| = 5$ and $p^T p' = 1/3$ for every $p, p' \in U'$, $p\neq p'$.
\item[(ii)] The polytope $P_V(\Esix)+Z(U)$ is not a parallelotope.
\end{enumerate}
\end{theor}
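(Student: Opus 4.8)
The plan is to deduce Theorem~\ref{PUP} from the structural Lemmas~\ref{TJ}, \ref{L2}, \ref{lem:transversal_to_m_edge} and~\ref{lem:six_belts} together with the general face-structure and standard-vector machinery of Sections~\ref{sec:gen_results} and~\ref{sec:standard_vector}. The first step is to restate condition~(i): by Lemma~\ref{TJ}, a $5$-element subset $U'\subseteq\mathcal M$ with $p^Tp'=\tfrac13$ for all distinct $p,p'\in U'$ is precisely a set $T_J(q)$ --- the vertex set of a $4$-simplex facet of some cross-polytopal facet $\beta_5(q)$ of $P_{Schl}$ --- so (i) holds iff $U$ contains some $T_J(q)$. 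Since $W(\Esix)$ acts transitively on the $27$ vectors $q\in\mathcal M$, we may normalize $q=c_{12}$; then $T_J(c_{12})$ chooses one vector from each of the five antipodal pairs $\{a_1,b_2\}$, $\{a_2,b_1\}$, $\{c_{34},c_{56}\}$, $\{c_{35},c_{46}\}$, $\{c_{36},c_{45}\}$ of the cross-polytope $\beta_5(c_{12})$.

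For (i)$\Rightarrow$(ii) I argue by contradiction: assume $T_J(c_{12})\subseteq U$ and that $P:=P_V(\Esix)+Z(U)$ is a parallelotope. Then $U$ contains one element of each of $\{c_{34},c_{56}\}$, $\{c_{35},c_{46}\}$, $\{c_{36},c_{45}\}$ and one of $\{a_2,b_1\}$, i.e. a subset of the form in part~2 of Lemma~\ref{lem:six_belts}, so $P$ carries the $6$-belt whose facet vectors correspond to $\{\pm(a_2-a_1),\pm(b_1-a_1),\pm(b_2-a_1)\}$, a configuration of Lemma~\ref{L2} type~(a). On the other hand, by Lemma~\ref{lem:transversal_to_m_edge} the fifth element of $T_J(c_{12})$ lying in $\{a_1,b_2\}$, together with the three chosen $c_{ij}$, is strongly transversal to the standard $m$-edge $[a_2,-b_1]$; I would use this to locate an $(n-2)$-face of $P$ in that same type~(a) plane around which \emph{all four} antipodal facet-pairs of the plane meet. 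This gives an $8$-belt, contradicting the third Venkov condition. Equivalently, one checks that the $6$-belt on $\{P_1,P_2,P_4\}$ guaranteed by part~2 and the face excluded in part~4 cannot both occur around the $(n-2)$-faces of a type~(a) plane unless that plane supports a belt of length $\neq4,6$.

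For (ii)$\Rightarrow$(i) I argue the contrapositive: assuming $U$ contains no $T_J(q)$, I verify the Venkov conditions for $P=P_V(\Esix)+Z(U)$. Central symmetry of $P$ and of its facets is immediate, since by Section~\ref{sec:gen_results} every facet of $P$ is a translate of a centrally symmetric facet of $P_V(\Esix)$, or the Minkowski sum of such a facet or of a standard $(n-2)$-face with part of $Z(U)$. For the belt condition, every $(n-2)$-face of $P$ is either a translate of an $(n-2)$-face of $P_V(\Esix)$ or an extension of a standard $(n-3)$-face, and the facets meeting along it correspond, via Lemmas~\ref{lem:operator}, \ref{lem:st_vec_hered} and~\ref{lem:depend_hered}, to a coplanar family of standard vectors of $P_V(\Esix)$; by Lemma~\ref{L2} the $2$-plane they span is of one of the types (a)--(e). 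Parts~4, 5, 6 of Lemma~\ref{lem:six_belts} rule out any $6$-belt of types (c), (d), or the $\{P_1,P_3,P_4\}$ sub-configuration of~(a), while parts~1--3 say exactly when types~(b), (e) and the $\{P_1,P_2,P_4\}$ sub-configuration of~(a) carry a $6$-belt; in all other cases the face lies in a $4$-belt. An $(n-2)$-face could only lie in an $8$-belt if a type~(a) plane had all four of its facet-pairs meeting along it, and following the completion conditions of Lemma~\ref{lem:six_belts} shows this happens exactly when $U$ contains all five vectors of some $T_J(c_{12})$, which is excluded. Hence all belts of $P$ have length $4$ or $6$, and $P$ is a parallelotope.

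The main obstacle, in both directions, is the last point: identifying precisely the condition on $U$ under which the four facet-pairs of a type~(a) plane assemble into an $8$-belt around a single $(n-2)$-face, and showing that this condition coincides with ``$T_J(c_{12})\subseteq U$''. This amounts to tracking, via the operators $A_v$ of Lemma~\ref{lem:operator}, how the $m$-edge $[a_2,-b_1]$ and its transversal directions are transported through the successive Minkowski summations by the elements of $U$, and using Lemma~\ref{lem:transversal_to_m_edge} to decide which of those elements genuinely extend the $m$-edge rather than translate it. Once this correspondence is pinned down, both implications follow by the bookkeeping sketched above.
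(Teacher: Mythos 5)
Your direction (i)$\Rightarrow$(ii) is essentially the paper's argument: after normalizing $U'=T_J(c_{12})$ via Lemma~\ref{TJ}, the presence of one vector from each of $\{c_{34},c_{56}\}$, $\{c_{35},c_{46}\}$, $\{c_{36},c_{45}\}$, $\{a_2,b_1\}$ \emph{and} one from $\{a_1,b_2\}$ forces too many facets into the single type-(a) plane. The clean way to finish --- which you leave as something you ``would'' do --- is not Lemma~\ref{lem:transversal_to_m_edge} combined with Assertion~4, but Assertion~2 of Lemma~\ref{lem:six_belts} applied twice: once as stated (to $\{p_1,p_2,p_3,p_4\}$) and once with $a_1$ and $a_2$ interchanged (to $\{p_1,p_2,p_3,p_5\}$). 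This yields two distinct $6$-belts of $P_V(\Esix)+Z(U)$, with facet vectors corresponding to $\{\pm(a_2-a_1),\pm(b_1-a_1),\pm(b_2-a_1)\}$ and $\{\pm(a_2-a_1),\pm(b_1-a_1),\pm(b_1-a_2)\}$, sharing the four facets attached to $\pm(a_2-a_1)$ and $\pm(b_1-a_1)$ --- impossible, since two belts can share only two facets. Your ``$8$-belt'' phrasing expresses the same contradiction, but Assertion~4 concerns the triple $\{\pm(a_2-a_1),\pm(b_1-a_2),\pm(b_2-a_1)\}$, which is not the configuration needed here.

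The direction (ii)$\Rightarrow$(i) has a genuine gap. You propose the contrapositive: assume $U$ contains no $T_J(q)$ and verify the Venkov conditions for $P=P_V(\Esix)+Z(U)$ directly, using Lemmas~\ref{lem:operator}, \ref{lem:depend_hered}, \ref{lem:st_vec_hered} and~\ref{lem:six_belts} to control the $(n-2)$-faces and their belts. But each of those lemmas is proved under the hypothesis that the polytope in question is \emph{already} a parallelotope (the standard-vector machinery of Section~\ref{sec:standard_vector} requires $v$ to be free for $P$, and Lemma~\ref{lem:six_belts} explicitly begins ``let $P_V(\Esix)+Z(U)$ be a parallelotope''), so invoking them to establish that $P$ is a parallelotope is circular; and the closing claim that ``in all other cases the face lies in a $4$-belt'' is precisely what would require proof. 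The paper avoids this entirely: it never verifies Venkov conditions for $P$. Instead it extracts a minimal forbidden subset $U_0\subseteq U$, deletes one element $p$ so that $P_V(\Esix)+Z(U_0\setminus\{p\})$ is a genuine parallelotope to which Theorem~\ref{bfr} and Lemma~\ref{lem:six_belts} legitimately apply, finds a $6$-belt of that parallelotope with no facet parallel to $p$, and reads off the five-element set $U'$ from the three admissible belt types (your Case of type~(b) being impossible because $p$ is free for $P_V(\Esix)$, and types~(a) and~(e) each forcing $U'=T_J(q)\subseteq U_0\subseteq U$). To salvage your structure you would have to recast it as an induction on $|U|$, proving at each step that every $p\in U$ is free for $P_V(\Esix)+Z(U\setminus\{p\})$ --- which is the paper's argument in disguise. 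As written, your (ii)$\Rightarrow$(i) does not close.
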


\proof (i)$\Rightarrow$(ii) Assume the converse, namely, that $P_V(\Esix)+Z(U)$ is a parallelotope. Then $P_V(\Esix)+Z(U_0)$ is a parallelotope as well. 

By Lemma~\ref{TJ}, $U_0 = T_J(q)$ for some $q\in \mathcal M$. Without loss of generality, let $q = c_{12}$. Then $U_0 = \{p_1, p_2, p_3, p_4, p_5\}$, where
$$p_1 \in \{c_{34}, c_{56}\}, \; p_2 \in \{c_{35}, c_{46}\}, \; p_3 \in \{c_{36}, c_{45}\}, \; p_4 \in \{a_2, b_1\}, p_5 \in \{a_1, b_2\}.$$

Let $U_1 = \{p_1, p_2, p_3, p_4 \}$, $U_2 = \{p_1, p_2, p_3, p_5 \}$. Since $P_V(\Esix)+Z(U_0)$ is a parallelotope, so are $P_V(\Esix)+Z(U_1)$ and
$P_V(\Esix)+Z(U_2)$. 

By Lemma~\ref{lem:six_belts}, Assertion 2, the parallelotope $P_V(\Esix)+Z(U_1)$ has a 6-belt with facet vectors corresponding to 
$$\{\pm(a_2 - a_1), \pm(b_1 - a_1), \pm(b_2 - a_1)\}.$$
On the other hand, the parallelotope $P_V(\Esix)+Z(U_2)$ has a 6-belt with facet vectors corresponding to
$$\{\pm(a_2 - a_1), \pm(b_1 - a_1), \pm(b_1 - a_2)\}.$$
Indeed, this statement is analogous to Lemma~\ref{lem:six_belts}, Assertion 2 up to interchange of $a_1$ and $a_2$. 

Hence $P_V(\Esix)+Z(U)$ has two 6-belts with
4 common facets (these belts are inherited from $P_V(\Esix)+Z(U_1)$ and $P_V(\Esix)+Z(U_2)$). But 2 different belts can share only 2 facets.
The contradiction shows that $P_V(\Esix)+Z(U)$ is not a parallelotope.

(ii)$\Rightarrow$(i) Suppose that $P_V(\Esix)+Z(U)$ is not a parallelotope. Then $U$ contains some minimal forbidden subset, say $U_0\subseteq U$.
Choose arbitrarily $p \in U_0$ and let $U_1 = U_0 \setminus \{p\}$.
Then $P_V(\Esix)+Z(U_1)$ is a parallelotope and $P_V(\Esix)+Z(U_0)+z(p)$ is not.
Thus $P_V(\Esix)+Z(U_0)$ has a $6$-belt with no facet of this belt parallel to $p$. According to Lemmas~\ref{L2}~and~\ref{lem:six_belts}, the 6-belt
is of one of the three types.

\begin{enumerate}
\item The facet vectors of the 6-belt correspond to a configuration of vectors from $\mathcal R \cup \mathcal T$ that is equivalent to 
$$\{\pm(a_2 - a_1), \pm(a_3 - a_1), \pm(a_3 - a_2)\}$$
up to an isometry of $P_V(\Esix)$.
\item The facet vectors of the 6-belt correspond to a configuration of vectors from $\mathcal R \cup \mathcal T$ that is equivalent to
$$\{\pm(a_2 - a_1), \pm(b_1 - a_1), \pm(b_2 - a_1)\}$$ 
up to an isometry of $P_V(\Esix)$.
\item The facet vectors of the 6-belt correspond to a configuration of vectors from $\mathcal R \cup \mathcal T$ that is equivalent to
$$\{\pm(b_2 - a_1), \pm(b_3 - a_1), \pm(b_3 - b_2)\}$$
up to an isometry of $P_V(\Esix)$.
\end{enumerate}

Consider these three cases separately.

\noindent{\bf Case 1.} Without loss of generality, assume that the facet vectors of the 6-belt correspond exactly to the 6-tuple 
$$\{\pm(a_2 - a_1), \pm(a_3 - a_1), \pm(a_3 - a_2)\}.$$
Then the facets of this belt are parallel to the hyperplanes
$$\left\langle a_2 - a_1 \right\rangle^\bot, \left\langle a_3 - a_1 \right\rangle^\bot, \left\langle a_3 - a_2 \right\rangle^\bot.$$
These are exactly the facet directions of one particular $6$-belt of $P_V(\Esix)$. Since $p$ is free for $P_V(\Esix)$, it is parallel to one of those facets.
This contradicts the assumption on the vector $p$, so this case is impossible.

\noindent{\bf Case 2.} Without loss of generality, assume that the facet vectors of the 6-belt correspond exactly to the 6-tuple 
$$\{\pm(a_2 - a_1), \pm(b_1 - a_1), \pm(b_2 - a_1)\}.$$
Then the 4-dimensional direction of this 6-belt is 
$$\left\langle a_2 - a_1, b_1 - a_1\right\rangle^{\bot},$$
because two pairs of facets of the belt are extended facets of $P_V(\Esix)$ and have the same direction with those facets. Thus the orthogonal projection
of $P_V(\Esix) + Z(U_0)$ onto the 2-plane $\left\langle a_2 - a_1, b_1 - a_1\right\rangle$ should be an 8-gon. Similarly to the proof of
Assertion~2 of Lemma~\ref{lem:six_belts}, we conclude that $U_0$ should contain a vector $p_5 \in \{a_1, b_2\}$ (as well as $a_2$ or $b_1$). 
By the same Assertion~2 of Lemma~\ref{lem:six_belts}, $U_1$ also contains the set $\{p_1, p_2, p_3, p_4\}$, where
$$p_1 \in \{c_{34}, c_{56}\}, \; p_1 \in \{c_{35}, c_{46}\}, \; p_3 \in \{c_{36}, c_{45}\}, \; p_4 \in \{a_2, b_1\}.$$
Setting $U' = \{p_1, p_2, p_3, p_4, p_5\}$ finishes the proof of this case, because $p_i^Tp_j = 1/3$ for every $1\leq i < j \leq 5$. The last identity follows,
for instance, from the fact that $U'$ is of the form $T_J(a_2 - a_1)$ for every possible choice of its elements.

\noindent{\bf Case 3.} Without loss of generality, assume that the facet vectors of the 6-belt correspond exactly to the 6-tuple 
$$\{\pm(b_2 - a_1), \pm(b_3 - a_1), \pm(b_3 - b_2)\}.$$

By Assertion 3 of Lemma~\ref{lem:six_belts}, the set $U_0$ contains a subset $\{b_1, c_{45}, c_{46}, c_{56}\}$. The proof of the same assertion also
shows that the facets of the 6-belt are orthogonal to the vectors $b_2 - a_1, b_3 - a_1, b_3 - b_2$. Therefore the vector $p$, which is not parallel
to any of those facets, is not orthogonal to any of the vectors $b_2 - a_1, b_3 - a_1, b_3 - b_2$. This is possible only for $p = b_2$ or $p = b_3$.

Setting $U' = \{b_1, b_2, c_{45}, c_{46}, c_{56}\}$ or $U' = \{b_1, b_3, c_{45}, c_{46}, c_{56}\}$ correspondingly, we finish the proof of this case.

Since every possible case is considered, the implication (ii)$\Rightarrow$(i) is proved. \qed

Recall that a subset $U \subset \mathcal M$ is {\it feasible} if $P_V(\Esix) + Z(U)$ is a parallelotope. As mentioned before, all feasible subsets are
unimodular. Hence the following corollary.

it contains no subset $U' \subseteq U$ such that $|U'| = 5$ and $p^T p' = 1/3$ for every 
$p, p' \in U'$, $p\neq p'$. If $U$ is feasible, then $P_V(\Esix) + Z(U)$ is a parallelotope.
This immediately gives the following corollary.

\begin{cor}\label{uni}
Suppose that $U\subseteq\mathcal M$, and there is no subset $U' \subseteq U$ such that $|U'| = 5$ and $p^T p' = 1/3$ for every 
$p, p' \in U'$, $p\neq p'$. Then $U$ is unimodular.
\end{cor}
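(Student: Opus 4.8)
The plan is to derive Corollary~\ref{uni} as an immediate consequence of Theorem~\ref{PUP} together with the general fact, recalled in the introduction, that the vector system of any zonotope summand of a parallelotope must be unimodular. So the first step is to invoke Theorem~\ref{PUP}: if $U\subseteq\mathcal M$ contains no $5$-element subset $U'$ with $p^Tp'=\tfrac13$ for all distinct $p,p'\in U'$, then condition (i) of Theorem~\ref{PUP} fails, hence condition (ii) fails, i.e.\ $P_V(\Esix)+Z(U)$ is a parallelotope. In other words, $U$ is feasible.

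The second step is to recall why feasibility forces unimodularity. As stated in Section~\ref{sect:Intro}, if $P+Z(U)$ is a parallelotope then $U$ must be unimodular; this is the matroid-theoretic necessary condition attributed to \cite{BZ,DaG,Gr}. (Concretely: $P_V(\Esix)+Z(U)$ is a zone-open parallelotope, and by Lemma~\ref{lem:operator} its lattice of facet vectors $\Lambda(P_V(\Esix)+Z(U))$ is obtained from $\Lambda(\Esix)$ by the unimodular operator $A_v$ at each step; extracting the zonotope summand then shows the generating vectors of $Z(U)$, rescaled into any basic subfamily, have integer coordinates.) Since we have just shown $U$ is feasible, this applies and $U$ is unimodular.

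I would therefore write the proof as essentially two sentences: feasibility of $U$ follows from the implication (i)$\Rightarrow$(ii) of Theorem~\ref{PUP} (used contrapositively), and unimodularity of $U$ then follows from the general necessity, noted in the introduction, of unimodularity of the vector set of any zonotope whose Minkowski sum with a parallelotope is again a parallelotope. There is essentially no obstacle here: the corollary is a packaging of Theorem~\ref{PUP} with a previously recalled general principle, and the only thing to be careful about is citing the right direction of Theorem~\ref{PUP} and the right source for ``parallelotope $\Rightarrow$ unimodular''. If one wanted a self-contained argument rather than a citation for the latter, the mild additional work would be to run the inductive description of $\Lambda(P+z(v))$ from Lemma~\ref{lem:operator} along an ordering $u_1,\dots,u_p$ of $U$ and observe that the composite operator $A_{u_p}\cdots A_{u_1}$ is unimodular, forcing the $u_i$ to be unimodular as a vector system; but for the purposes of this corollary the citation suffices.
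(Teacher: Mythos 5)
Your proof is correct and follows essentially the same route as the paper: the paper likewise derives Corollary~\ref{uni} by combining Theorem~\ref{PUP} (absence of such a $U'$ implies $P_V(\Esix)+Z(U)$ is a parallelotope, i.e.\ $U$ is feasible) with the previously stated fact that every feasible subset is unimodular. No gap here; the corollary is exactly the two-line packaging you describe.
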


\section{Maximal feasible subsets in $\mathcal M$}\label{SectionEnumeration}

It is convenient to denote a unimodular system $U$ by a regular matroid
$M_U$, which is represented by $U$. There are many definitions of a matroid,
see, for example, \cite{Aig}. In particular, a matroid on a ground set $X$
is a family $\mathcal C$ of {\em circuits} $C\subseteq X$ satisfying the
following axioms:
\begin{itemize}
\item If $C_1,C_2\in\mathcal C$, then $C_1\not\subseteq C_2$, $C_2\not\subseteq C_1$
\item and if $x\in C_1\cap C_2$, then there is $C_3\in\mathcal C$ such that $C_3\subseteq C_1\cup C_2 - \{x\}$.
\end{itemize}
Note that linear dependencies between vectors of any family of vectors
satisfy these axioms.
A matroid $M$ on a set $X$ is {\em represented} by a set of vectors $U$ if
there is a one-to-one map $f:X\to U$ such that, for all $C\in\mathcal C$, the
set $f(C)$ is a minimal by inclusion linearly dependent subset of $U$.

Each unimodular set of vectors represents a {\em regular} matroid. Special
cases of a regular matroids are {\em graphic} $M(G)$ and {\em cographic}
$M^*(G)$ matroids whose ground sets are the set of edges of a graph $G$. The
families of circuits of these matroids are {\em cycles} and {\em cuts} of
the graph $G$, respectively.

Seymour proved in \cite{Se} that a regular matroid is a $1$-, $2$- and
$3$-sum of graphic, cographic matroids and a special matroids $R_{10}$,
which is neither graphic nor cographic (see also \cite{tru}). Using this
work of Seymour, the authors of \cite{DaG} described maximal by inclusion
unimodular systems of a given dimension. Their description is similar to
that of Seymour, but they denote a $k$-sum of Seymour by $(k-1)$-sum, and
slightly changed the definition of the $k$-sum. Namely, $k$-sum of Seymour
gives the symmetric difference of the summing sets, but the corresponding
$(k-1)$-sum of \cite{DaG} gives the union of the summing sets. Below, we use
the second $k$-sum from \cite{DaG}.

The graphic matroid $M(\KnP)$ of the complete graph
$\KnP$ on $n+1$ vertices is represented by the root system
$\An$, which is one of maximal by inclusion unimodular
$n$-dimensional systems. The matroid $R_{10}$ is represented by a
ten-element unimodular system of dimension $5$, which is denoted in
\cite{DaG} by $\Efive$. Each maximal by inclusion cographic unimodular
system of dimension $n$ represents the cographic matroid $M^*(G)$ of a
$3$-connected cubic non-planar graph $G$ on $2(n-1)$ vertices and $3(n-1)$
edges.

All ten maximal by inclusion unimodular feasible subsets
$U\subseteq\mathcal M$ were found by computer. The method is to take an
unimodular subset, to consider all possible ways to extend it in an
acceptable way and to reduce by isomorphism. The results are presented on
Table \ref{AllTenSystem}. We give vectors of these sets in the denotations
\eqref{min}. Let $\Aut({\mathcal M})$ be the group of automorphisms of the
set ${\mathcal M}$. We denote by $\Stab(U)\subseteq \Aut({\mathcal M})$ the
stabilizer subgroup of the set $U$. Among these $10$ sets there are two
absolutely maximal $6$-dimensional unimodular sets $U_3$ and $U_{10}$
(see \cite{DaG}). The matroids which are neither graphic nor cographic are
given in Table \ref{NonGraphicCographicMatroids} and the rest in
Figure \ref{GraphicOrCographic}.

\begin{table}
\begin{center}
\begin{tabular}{|c|c|c|c|c|c|}
\hline
nr & $|U|$ & $\dim U$ & $|Stab\, U|$ & status & ref\\
\hline
1 &  9  & 5 & 384 &   graphic & $M(G_1)$\\
2 & 12  & 5 &  96 & cographic & $M^*(G_2)$\\
3 & 12  & 6 &  12 &   special & $R_{12}$\\
4 & 12  & 6 &  12 &   special & $R_{10}\oplus_1 C_3$\\
5 & 13  & 6 &   4 & cographic & $M^*(G_5)$\\
6 & 13  & 6 &   4 & cographic & $M^*(G_6)$\\
7 & 13  & 6 &   2 & cographic & $M^*(G_7)$\\
8 & 14  & 6 &   8 &   graphic & $M(G_8)$\\
9 & 14  & 6 &  24 &   graphic & $M(G_9)$\\
10& 15  & 6 &  24 & cographic & $M^*(G_{10})$\\
\hline
\end{tabular}
\end{center}
\caption{All ten maximal admissible unimodular system of ${\mathcal M}$}
\label{AllTenSystem}
\end{table}

\begin{table}
\begin{center}
\begin{minipage}[b]{5.5cm}
\centering
\begin{equation*}
\begin{array}{|c|cccccc|}
\hline
b_2    & 1 & 0 & 0 & 0 & 0 & 0 \\
c_{26} & 0 & 1 & 0 & 0 & 0 & 0 \\
a_2    & 0 & 0 & 1 & 0 & 0 & 0 \\
a_3    & 0 & 0 & 0 & 1 & 0 & 0 \\
b_1    & 0 & 0 & 0 & 0 & 1 & 0 \\
b_5    & 0 & 0 & 0 & 0 & 0 & 1 \\ \hline
b_3    & 1 & 0 &-1 & 1 & 0 & 0 \\
c_{36} & 0 & 1 & 1 &-1 & 0 & 0 \\
a_4    & 0 & 1 & 0 &-1 &-1 &-1 \\
a_1    &-1 & 0 & 1 & 0 & 1 & 0 \\
c_{14} & 0 &-1 & 0 & 1 & 0 & 1 \\
c_{15} & 1 & 0 &-1 & 0 &-1 &-1 \\ \hline
\end{array}
\end{equation*}
The unimodular system $R_{12}$
\end{minipage}
\begin{minipage}[b]{5.5cm}
\centering
\begin{equation*}
\begin{array}{|c|cccccc|}
\hline
b_2    & 1 & 0 & 0 & 0 & 0 & 0 \\
c_{26} & 0 & 1 & 0 & 0 & 0 & 0 \\
a_2    & 0 & 0 & 1 & 0 & 0 & 0 \\
a_3    & 0 & 0 & 0 & 1 & 0 & 0 \\
b_1    & 0 & 0 & 0 & 0 & 1 & 0 \\
b_5    & 0 & 0 & 0 & 0 & 0 & 1 \\  \hline
b_3    & 1 & 0 &-1 & 1 & 0 & 0 \\
c_{36} & 0 & 1 & 1 &-1 & 0 & 0 \\
a_4    & 0 & 1 & 0 &-1 &-1 &-1 \\
a_1    &-1 & 0 & 1 & 0 & 1 & 0 \\
c_{16} & 1 & 1 & 0 & 0 &-1 & 0 \\
c_{45} & 0 &-1 & 0 & 1 & 1 & 0 \\ \hline
\end{array}
\end{equation*}
The unimodular system $R_{10}\oplus_1 C_3$
\end{minipage}

\end{center}
\caption{Unimodular systems which are neither graphic nor cographic}
\label{NonGraphicCographicMatroids}
\end{table}

\begin{figure}
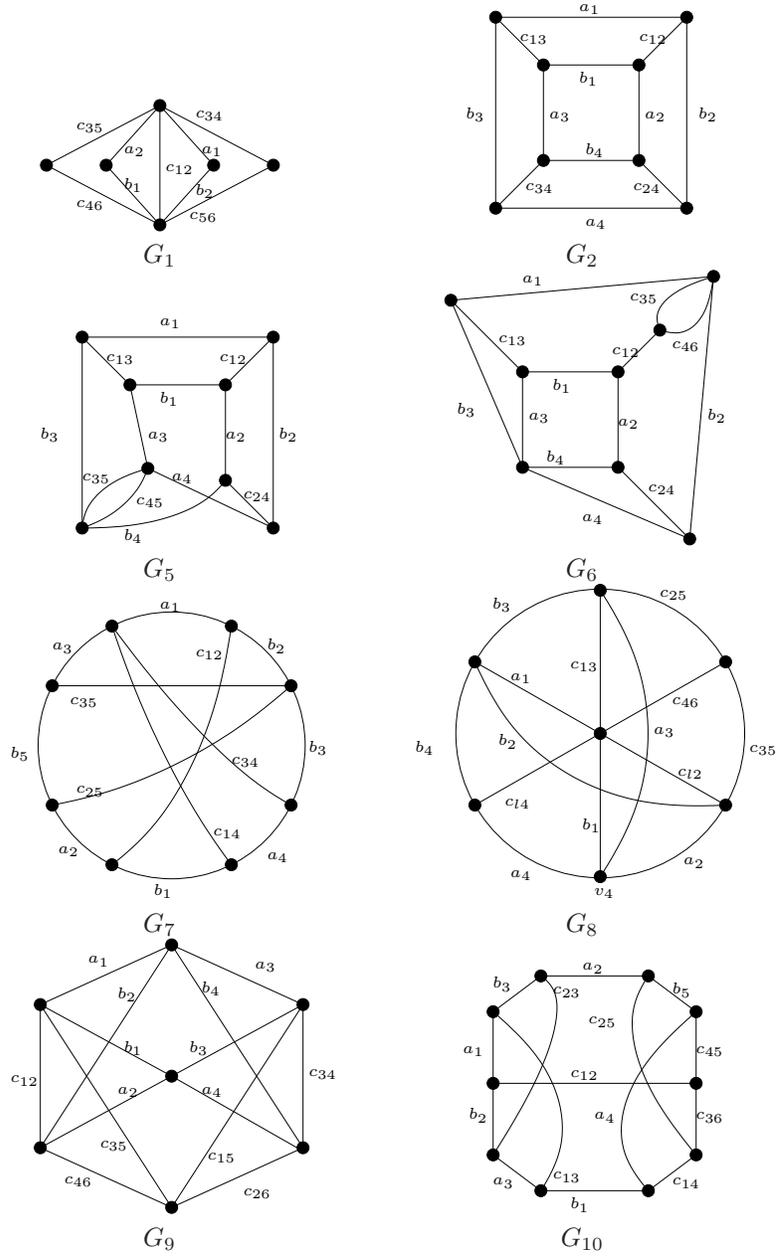


\begin{center}
\begin{minipage}[b]{5.5cm}
\centering
\input{K12_e.pstex_t}\\
$G_1$
\end{minipage}
\begin{minipage}[b]{5.5cm}
\centering
\input{Ggamma.pstex_t}\\
$G_2$
\end{minipage}
\begin{minipage}[b]{5.5cm}
\centering
\input{Ga.pstex_t}\\
$G_5$
\end{minipage}
\begin{minipage}[b]{5.5cm}
\centering
\input{Gb.pstex_t}\\
$G_6$
\end{minipage}
\begin{minipage}[b]{5.5cm}
\centering
\input{Case7_G7.pstex_t}\\
$G_7$
\end{minipage}
\begin{minipage}[b]{5.5cm}
\centering
\input{Case8_G1.pstex_t}\\
$G_8$
\end{minipage}
\begin{minipage}[b]{5.5cm}
\centering
\input{Case9_G1.pstex_t}\\
$G_9$
\end{minipage}
\begin{minipage}[b]{5.5cm}
\centering
\input{Case10_G10.pstex_t}\\
$G_{10}$
\end{minipage}
\end{center}
\caption{Occurring graphs}
\label{GraphicOrCographic}
\end{figure}

\section{Results for other lattices}\label{SectionOtherLattice}
The algorithms explained in Section \ref{SectionEnumAlgo}
have been implemented in \cite{polyhedral} and applied to several
highly symmetric lattices.
We thus proved that the polytopes $P_V(L)$ are nonfree
for $L=\mathsf{E}_7^*$, $\kappa_8^*$, $\kappa_9^*$, $\Lambda_{10}^*$
or $K_{12}$.
For the lattice $BW_{16}$ this direct approach does not work, because
there are too many $6$-belts. However, by selecting a subset of the
$6$-belts, one can prove that this lattice is nonfree as well.

For a general lattice $L$, we cannot expect a simple criterion
for determining the forbidden and feasible subsets of $P_V(L)$.
This is because the lattice $\Esix$ is very symmetric and other cases
are necessarily more complicated.

In Table \ref{FreeInformationForLattices} we give detailed information
on $12$ lattices of the computed data.
For the lattice $ER_7$, we found out that there exist some minimal
forbidden subsets $U$ such that $P_V(ER_7)$ has some faces $G$ such
that $\dim\, G_U = 7$.
Hence, for this case, one cannot limit oneself to the quasi $4$-belt
in the analysis.

\begin{table}
\begin{tabular}{|r|c|c|c|c|c|}
\hline
$L$    & $|{\mathcal F}(L)|$ & $|O({\mathcal F}_{min}(L))|$ & $|O({\mathcal F}_{max}(L))|$ & $\dim_{max}(L)$ & $S_{max}(L)$\\
\hline
$\Esix$      & 27 & 2  & 10 & 6 & 15\\
$\Eseven$    & 28 & 2  & 4  & 7 & 14\\
$ER_7$       & 28 & 9 & 49 & 7 & 15\\
$ER_7^{*}$    & 6 & 0 & 1 & 6 & 6\\
$\kappa_7$   & 11 & 2 & 2 & 6 & 9\\
$\kappa_7^*$ & 10 & 2 & 1 & 1 & 1\\
$\kappa_8$   & 6 & 1 & 2 & 3 & 3\\
$\kappa_9$   & 3 & 1 & 1 & 2 & 2\\
$\Lambda_9$  & 1 & 0 & 1 & 1 & 1\\
$\Lambda_9^*$ & 16 & 2 & 1 & 1 & 1\\
$\Lambda_{10}$ & 3 & 0 & 1 & 2 & 3\\
$O_{10}$       & 40 & 2 & 2 & 5 & 10\\
\hline
\end{tabular}
\caption{For $12$ lattices, we give the following information on their free structure: $|{\mathcal F}(L)|$ is the size of ${\mathcal F}(L)$, 
$|O({\mathcal F}_{min}(L)|$ is the number of orbits of minimal forbidden subsets, 
$|O({\mathcal F}_{max}(L)|$ is the number of orbits of maximal unimodular subsets, 
$\dim_{max}(L)$ is the maximum dimension of admissible subsets of ${\mathcal F}(L)$,
$S_{max}(L)$ is the maximum size of an admissible subsets of ${\mathcal F}(L)$.
The lattices are given in \cite{nebe_sloane} and the lattice $ER_7$ is given in \cite{35tope}.}
\label{FreeInformationForLattices}
\end{table}

\section*{Appendix A. Feasible extensions of $(n-3)$-faces}
Hereby we consider feasible extensions of $(n-3)$-faces of the parallelotope $P$ and construct local matchings around these extensions.
The classification of feasible arrangements of $\Fan(G)$ and $\pi_G(z(v))$ is given in Figures~\ref{segm1} and~\ref{segm2}.

We will use the notation $v \in U_i(G)$ ($i = 1,2,3$) introduced in Section~\ref{sect:Intro}. Recall the meaning of the notation. 
If $\lambda > 0$ is sufficiently small, and $x$ is an arbitrary point in the relative interior of $G$, then we write 
\begin{itemize}
\item $v\in U_1(G)$ if $x\pm\lambda v\notin G$ and one of the points $x\pm \lambda v$ is in $P$.
\item $v\in U_2(G)$ if $x\pm \lambda v\in \Lin(G)$.
\item $v\in U_3(G)$ if $x\pm \lambda v\notin G$ and $x\pm\lambda v\notin P$.
\end{itemize}
A feasible extension of the gace $G$ exists iff $v\in U_3(G)$.

If $\Fan(G)$ and $\pi_G(z(v))$ are arranged together as in one of the cases a.1), a.2), b.2), c.2), c.3), d.2), d.3), e.3),
then $v \in U_1(G)$, regardless of the position of $\pi_G(P)$ relative to $\Fan(G)$. Hence $P+z(v)$ has no 
$(n-2)$-face of the form $G+z(v)$. Consider the remaining 5 cases.

\noindent{\bf b.1)} Suppose that $\Fan(G)$ and $\pi_G(z(v))$ are arranged together as in case b.1). Then, if $\pi_G(P)$ lies in one of the cones 
$O(-x_1)(-x_4)x_2x_3$ or $Ox_1x_4(-x_2)(-x_3)$, we have $v \in U_1(G)$. In all four other subcases we have $v \in U_3(G)$.

Let $P$, $P_1$, $P_2$, and $P_3$ be the translates of $P$ that share the face $G$ so that their projections along $G$ lie in the four cones 
$Ox_1x_2x_3x_4$, $O(-x_1)(-x_2)x_4x_3$, $O(-x_1)(-x_2)(-x_3)(-x_4)$, and $Ox_1x_2(-x_4)(x_3)$. Then $P + z(v)$, $P_1 + z(v)$, $P_2 + z(v)$, and 
$P_3 + z(v)$ match around their common face $G+z(v)$. This matching can be presented as splitting $D(G)$ into two quadrangular pyramids by a 
parallelogram, which is the ``dual cell'' of $G+z(v)$. We do not assume that the tiling by translates $P+z(v)$ exists, however, the local 
matching proves that $G+z(v)$ determines a $4$-belt.

\noindent{\bf c.1)} Suppose that $\Fan(G)$ and $\pi_G(z(v))$ are arranged together as in case c.1). Then, if $\pi_G(P)$ lies in one of the cones 
$Ox_1x_4y$ or $Ox_2x_3y$,  we have $v \in U_1(G)$. For the other three subcases we have $v \in U_3(G)$.

Let $P$, $P_1$, and $P_2$ be the translates of $P$ that share the face $G$ so that their projections along $G$ lie in the three cones 
$Ox_1x_2x_3x_4$, $Ox_1x_2y$, and $Ox_3x_4y$. Then $P + z(v)$, $P_1 + z(v)$, and $P_2 + z(v)$ match around their common face $G+z(v)$. This matching 
can be presented as splitting the dual cell $D(G)$ into two tetrahedra by a triangle, which is the ``dual cell'' of $G+z(v)$. Thus $G+z(v)$ determines a $6$-belt.

\noindent{\bf d.1)} Suppose that $\Fan(G)$ and $\pi_G(z(v))$ are arranged together as in case d.1). Then, if $\pi_G(P)$ lies in one of the three cones 
$Ox_1x_2y$, $Ox_1x_3y$ or $Ox_2x_3(-y)$, then we have $v \in U_1(G)$. For the other three subcases we have $v \in U_3(G)$.

Let $P$, $P_1$, and $P_2$ be the translates of $P$ that share the face $G$ so that their projections along $G$ lie in the three cones 
$Ox_1x_2(-y)$, $Ox_1x_3(-y)$ or $Ox_2x_3y$,. Then $P + z(v)$, $P_1 + z(v)$, and $P_2 + z(v)$ match around their common face $G+z(v)$. This matching can be 
presented as splitting $D(G)$ into a tetrahedron and a quadrangular pyramid by a triangle, which is the ``dual cell'' of $G+z(v)$. 

\noindent{\bf e.2)} Suppose that $\Fan(G)$ and $\pi_G(z(v))$ are arranged together as in case e.2). Then, if $\pi_G(P)$ lies in one of the four cones 
$Ox_1x_2x_3$, $Ox_1x_2(-x_3)$, $O(-x_1)(-x_2)x_3$ or $O(-x_1)(-x_2)(-x_3)$, 
then we have $v \in U_1(G)$. For the other three subcases we have $v \in U_3(G)$.

Let $P$, $P_1$, $P_2$, and $P_3$ be the translates of $P$ that share the face $G$ so that their projections along $G$ lie in the four cones 
$Ox_1(-x_2)x_3$, $Ox_1(-x_2)(-x_3)$, $O(-x_1)x_2x_3$, and $O(-x_1)x_2(-x_3)$. 
Then $P + z(v)$, $P_1 + z(v)$, $P_2 + z(v)$, and $P_3 + z(v)$ match around their common face $G+z(v)$. 
This matching can be presented as splitting $D(G)$ into two triangular prisms by a parallelogram, which is the ``dual cell'' of $G+z(v)$.

\noindent{\bf e.1)} Suppose that $\Fan(G)$ and $\pi_G(z(v))$ are arranged together as in case e.1). Then, if $\pi_G(P)$ lies in one of the cones
$O(-x_1)x_2x_3$ or $Ox_1(-x_2)(-x_3)$, we have $v \in U_1(G)$. For the other three subcases we have $v \in U_3(G)$.

Let $P$, $P_1$, $P_2$, \ldots $P_5$ be the translates of $P$ that share the face $G$ so that their projections along $G$ lie in the six cones of
$\Fan(G)$ not mentioned in the previous paragraph. Let $P_6$ and $P_7$ be the other two translates of $P$ the face $G$.

Without loss of generality, we can assume that $P_6$ shares a facet with $P$, $P_1$ and $P_2$. Then $P_7$ shares a facet with $P_3$, $P_4$ and $P_5$.
We can also assume that the vector $\pi_G(v)$ is directed from $\pi_G(P_6)$ to $\pi_G(P_7)$. Then $P + z(v)$, $P_1 + z(v)$, and $P_2 + z(v)$ match around their 
common face $G+z(v)$, as well as $P_3 + 2v + z(v)$, $P_4 + 2v + z(v)$, $P_5 + 2v + z(v)$ match around $G + 2v + z(v)$ and share common facets with
the three polytopes mentioned just before. Thus $G$ produces two $(n-2)$-faces of the local matching. This can be presented as splitting $D(G)$ into 
two tetrahedra and an octahedron by two triangles, being the ``dual cells'' for $G+z(v)$ and $G + 2v + z(v)$.

\section*{Appendix B. 6-belts of parallelotopes $P_V(\Esix) + Z(U)$}

\begin{proof}[Proof of Lemma~\ref{L2}]
Notice that for every $r\in{\mathcal R}$ and $t\in{\mathcal T}$ one has $|r| = \sqrt{2}$ and $|t| = 2$.

Since every vector of $\mathcal R \cup \mathcal T$ is minimal for the lattice $\Esix$, then every vector from the set $(\mathcal R \cup \mathcal T) \cap \alpha$
is minimal for the two-dimensional lattice $\Esix \cap \alpha$.

Now we have two possibilities --- either the Delaunay tessellation 
$\mathcal D(\Esix \cap \alpha)$ for the lattice $\Esix \cap \alpha$ is triangular, or it is rectangular.

\noindent{\bf Case 1.} $\mathcal D(\Esix \cap \alpha)$ is triangular. Then there is no vector in $(\mathcal R \cup \mathcal T) \cap \alpha$,
except of the edge vectors of $\mathcal D(\Esix \cap \alpha)$. 
But, since the intersection $(\mathcal R \cup \mathcal T) \cap \alpha$ consists of at least 3 pairs of antipodal vectors, it should
contain every edge vector of $\mathcal D(\Esix \cap \alpha)$. As a consequence, $(\mathcal R \cup \mathcal T) \cap \alpha$ contains
three vectors that form a Delaunay triangle for $\Esix \cap \alpha$.

Every Delaunay triangle is acute-angled. Among all triangles that have all side-lengths equal to $\sqrt{2}$ or $2$, the acute-angled are the following:
\begin{itemize}
	\item equilateral with side-length $\sqrt{2}$;
	\item equilateral with side-length $2$.
	\item isosceles with sides equal to $2$, $2$, and $\sqrt{2}$.
\end{itemize}

This forces us to restrict the search to the following subcases:
\begin{itemize}
	\item[1.1.] $r,r'\in{\mathcal R}$ with $r^T r' = 1$ and $r - r' \in \mathcal R$;
	\item[1.2.] $t,t'\in{\mathcal T}$ with $t^T t' = 2$ and $t - t' \in \mathcal T$;
	\item[1.3.] $t,t'\in{\mathcal T}$ with $t^T t' = 3$ and $t - t' \in \mathcal R$;
\end{itemize}

Consider these subcases separately.

\noindent{\bf Subcase 1.1.} Consider a lattice triangle $\Delta$ in $\Esix$ with edge vectors $r$, $r'$ and $r - r'$. This triangle is equilateral with 
side-length $\sqrt{2}$. We prove that $\Delta$ is a Delaunay triangle for $\Esix$. 

Consider a ball $B \subset \mathbb R^6$ centered at the center of $\Delta$ and containing the vertices of $\Delta$ on its boundary. 
$B$ contains no points of $\Esix$ other than the vertices of $\Delta$. Indeed, every point of $B$ is at least as close to the nearest vertex of $\Delta$
as $2 / \sqrt{3}$. But the minimum distance between points of $\Esix$ is $\sqrt{2} > 2 / \sqrt{3}$, a contradiction.

Since $\Delta$ is a Delaunay triangle for $\Esix$, it is a face of $P_{Schl}$. All such faces are equivalent to $\conv \{a_1, a_2, a_3\}$ up to some action
of $W(\Esix)$. Hence Subcase 1.1 gives exactly the planar sets of type (b).

\noindent{\bf Subcase 1.2.} In this case we write $t = p_1 - p_1'$, $t' = p_2 - p_2'$, where $p_i, p_i' \in \mathcal M$, $p_i^T p_i' = -2/3$ ($i = 1,2$).
The condition $t^T t' = 2$ now can be written as
$$p_1^T p_2 + (p_1')^T p_2' - p_1^T p_2' - (p_1')^T p_2 = 2.$$

Each scalar product equals $4/3$, $1/3$, or $-2/3$, with no more than one scalar product being equal to $4/3$ (i.e., no more than one pair of vectors with
different indices coincide). This leaves the following four possibilities (up to an interchange of pairs $(p_1, p_1')$ and $(p_2, p_2')$ or swapping
vectors within these two pairs simultaneously).
$$4/3 + (-2/3) - (-2/3) - (-2/3) = 2, \qquad 4/3 + 1/3 - 1/3 - (-2/3) = 2,$$
$$4/3 + 1/3 - (-2/3) - 1/3 = 2, \qquad 1/3 + 1/3 - (-2/3) - (-2/3) = 2.$$

The first three identities imply $p_1 = p_2$. Also, without loss of generality we can asume that $p_1 = a_1$ and $p_1' = b_2$, as all other cases are
similar up to some action of $W(\Esix)$. Then the distance from $p_2'$ to both $a_1$ and $b_2$ should equal $\sqrt{2}$, which leaves the only case
$p_2' = c_{12}$, giving the planar sets of type (c).

If the fourth identity emerges, we can also assume $p_1 = a_1$ and $p_1' = b_2$. Then 
$$p_2 \in \{a_3, a_4, a_5, a_6, c_{23}, c_{24}, c_{25}, c_{26}\},$$
$$p_2' \in \{b_3, b_4, b_5, b_6, c_{13}, c_{14}, c_{15}, c_{16}\}.$$

Up to reassignment of indices, this results in the following pairs for $(p_2, p_2')$: $(a_3, b_4)$, $(c_{23}, c_{14})$, $(b_3, c_{13})$, or $(a_3, c_{13})$.
In all these cases we have a planar set of type (d).

\noindent{\bf Subcase 1.3.} In this case we write $t = p_1 - p_1'$, $t' = p_2 - p_2'$, where $p_i, p_i' \in \mathcal M$, $p_i^T p_i' = -2/3$ ($i = 1,2$).
The condition $t^T t' = 3$ can be rewritten as
$$p_1^T p_2 + (p_1')^T p_2' - p_1^T p_2' - (p_1')^T p_2 = 3.$$

Again, no more than one scalar product is equal to $4/3$, which results in the only possible case
$$4/3 + 1/3 - (-2/3) - (-2/3) = 3.$$

This leads to $p_1 = p_2$. Similarly to Subcase 1.2, we can assume $p_1 = p_2 = a_1$ and $p_1' = b_2$. Therefore
$$p_2' \in \{b_3, b_4, b_5, b_6, c_{13}, c_{14}, c_{15}, c_{16}\}.$$
Each option results in a planar set of type (e).

\noindent{\bf Case 2.} The Delaunay tessellation $\mathcal D(\Esix \cap \alpha)$ is rectangular. Let $\Xi$ be one of the rectangles of 
$\mathcal D(\Esix \cap \alpha)$.

Again, the set $(\mathcal R \cup \mathcal T) \cap \alpha$ contains at least 3 pairs of antipodal vectors. Therefore at least one of the
diagonal vectors of $\Xi$ is a minimal vector for $\Esix$.

Consequently, the sphere in $\mathbb R^6$, that has the diagonal of $\Xi$ as a diameter, is empty for $\Esix$. Hence $\Xi$ is a planar section of
a centrally symmetric Delaunay polytope from $\mathcal D(\Esix)$. This polytope has at least 4 vertices, so it is not a segment. Therefore it
is a crosspolytope. But all planar sections of such crosspolytopes are equivalent up to automorphisms of $\Esix$, so only one type of a planar set
is possible, namely, the type (a).
 
\end{proof}

\begin{proof}[Proof of Lemma~\ref{lem:transversal_to_m_edge}]
Define
\begin{equation}\label{eq:facets}
\begin{array}{c}
F = \conv \{a_1, a_2, a_3, a_4, a_5, a_6, -b_1, -b_2, -b_3, -b_4, -b_5, -b_6 \},\\
F' = \conv \{a_2, b_2, c_{13}, c_{14}, c_{15}, c_{16}, -a_1, -b_1, -c_{23}, -c_{24}, -c_{25}, -c_{26}\}.
\end{array}
\end{equation}

$F$ and $F'$ are facets of $P_V(\Esix)$, and their facet vectors are $q = a_1 - b_1$ and $q' = a_2 - a_1$ respectively. The edge $[a_2, -b_1]$
is standard for $P_V(\Esix)$ with the standard vector $t = a_2 - b_1$. It is easy to see that $t = q + q'$.

Suppose that $[a_2, -b_1] \oplus z(p)$ is a standard 2-face of the parallelotope $P_V(\Esix) + z(p)$.

Let $A_p$ be the operator moving the lattice $\Lambda(P_V(\Esix))$ to the lattice $\Lambda(P_V(\Esix) + z(p))$ as defined in Lemma~\ref{lem:operator}. 
One can see that $t$ is a standard vector of the parallelotope $P_V(\Esix) + z(p)$, passing through the center of the face $[a_2, -b_1] \oplus z(p)$. 
Therefore $A_p t = t$. Further, $A_p q$ and $A_p q'$ are facet vectors of the parallelotope $P_V(\Esix) + z(p)$, because facet vectors of $P_V(\Esix)$
always correspond to facet vectors of $P_V(\Esix) + z(p)$. Hence
$$A_p q + A_p q' = t = q + q'.$$

Recall that the operator $A_p$ has the form $A_p = Id + 2p e_p^T$, where $e_p$ is a particularly chosen normal to the layer defined by $p$.
Hence $e_p^T q = -e_p^T q'$.

If $e_p^T q = -e_p^T q' = 0$, then $p$ is parallel to both $F$ and $F'$, which results in 
$$p \in \{ c_{12}, c_{34}, c_{35}, c_{36}, c_{45}, c_{46}, c_{56} \}.$$
However, $p = c_{12}$ is not a possible option because $z(c_{12})$ and $[a_2, -b_1]$ are parallel and do not form a direct Minkowski sum.
But if $p \in \{ c_{34}, c_{35}, c_{36}, c_{45}, c_{46}, c_{56} \}$, then $q = A_p q$ and $q' = A_p q'$ are lattice vectors of the lattice
$\Lambda(P_V(\Esix) + z(p))$. Therefore $t = q + q'$ is also a lattice vector of $\Lambda(P_V(\Esix) + z(p))$. Consequently, $P_V(\Esix) + t + z(p)$
belongs to the same tiling as $P_V(\Esix) + z(p)$, and
$$(P_V(\Esix) + z(p)) \cap (P_V(\Esix) + t + z(p)) = [a_2, -b_1] + z(p).$$
The sum in the right-hand side is direct, because the segments $[a_2, -b_1]$ and $z(p)$ are not parallel. Hence $p$ is strongly transversal to $[a_2, -b_1]$.

Now suppose that $e_p^T q \neq 0$. Then the scalar products $e_p^T q$ and $e_p^T q'$ have opposite signs. Therefore $p$ is parallel neither to $F$ nor to 
$F'$. Moreover, if the vector $p$ intersects the facet $F$ inwards $P_V(\Esix)$, then $p$ intersects the facet $F'$ outwards $P_V(\Esix)$, and vice versa. 
Thus scalar products of $p$ with the facet normals of $F$ and $F'$, i.e., with $q$ and $q'$, should have opposite signs.
Among all vectors of $\mathcal M$ this property holds only for $a_1$ and $b_2$.

If $p = a_1$, then $p$ intersects $F$ in outer direction, therefore $A_p q = q + 2p$ and $A_p q' =  q' - 2p$. Similarly, if $p = b_2$,
$A_p q = q - 2p$ and $A_p q' =  q' + 2p$. Thus $t = A_p q + A_p q'$ is a lattice vector of $\Lambda(P_V(\Esix) + z(p))$ as well.

Again, this means that $P_V(\Esix) + t + z(p)$ belongs to the same tiling as $P_V(\Esix) + z(p)$, and
$$(P_V(\Esix) + z(p)) \cap (P_V(\Esix) + t + z(p)) = [a_2, -b_1] + z(p).$$
Similarly to the previous case, the sum in the right-hand side is direct, and thus $p$ is strongly transversal to $[a_2, -b_1]$.
\end{proof}

\begin{proof}[Proof of Lemma~\ref{lem:six_belts}]
We prove all the assertions independently.

\noindent{\bf 1.} Indeed, $P_V(\Esix)$ already has a 6-belt with facet vectors $\{\pm(a_2 - a_1), \pm(a_3 - a_1), \pm (a_3 - a_2)\}$. 
This $6$-belts turns into a $6$-belt of $P_V(\Esix) + Z(U)$ with the same direction of the generating $(n-2)$-face.

\noindent{\bf 2.} The vectors $\pm(a_2 - a_1)$ and $\pm(b_1 - a_1)$ are already facet vectors of $P_V(\Esix)$. Their directions,
$(a_2 - a_1)^{\bot}$ and $(b_1 - a_1)^{\bot}$ do not change after adding a zonotope $Z(U)$. 

Now consider the 6-belt of $P_V(\Esix) + Z(U)$ with facet vectors corresponding to $\{\pm(a_2 - a_1), \pm(b_1 - a_1), \pm(b_2 - a_1)\}$.
Its generating face $G$ should be parallel to both $(a_2 - a_1)^{\bot}$ and $(b_1 - a_1)^{\bot}$, so
$$G \parallel \left\langle a_2 - a_1, b_1 - a_1\right\rangle^{\bot}.$$
We also notice that 
$$\mathcal M \cap \left\langle a_2 - a_1, b_1 - a_1\right\rangle^{\bot} = \{c_{12}, c_{34}, c_{56}, c_{35}, c_{46}, c_{36}, c_{45} \}.$$

Consider the facets $F$ and $F'$ of $P_V(\Esix)$ as in formula (\ref{eq:facets}).
Recall that their facet vectors are $a_1 - b_1$ and $a_2 - a_1$ respectively. Let $F + Z(U_1)$ and $F' + Z(U'_1)$ be the
corresponding facets of $P_V(\Esix) + Z(U)$. $F + Z(U_1)$ and $F' + Z(U'_1)$ are adjacent, because in the set 
$\{\pm(a_2 - a_1), \pm(b_1 - a_1), \pm(b_2 - a_1)\}$ the vectors $a_1 - b_1$ and $a_2 - a_1$ are neighbouring, i.e.
no other vector from this 6-tuple lies is a positive combination of these two.

The direction $\left\langle a_2 - a_1, b_1 - a_1\right\rangle^{\bot}$ generates a belt of $P_V(\Esix) + Z(U)$. Therefore the intersection 
$$F + Z(U_1) \cap F' + Z(U'_1)$$
should be a $4$-dimensional face parallel to $\left\langle a_2 - a_1, b_1 - a_1\right\rangle^{\bot}$. 

However, $F \cap F' = [a_2, -b_1]$, i.e., $1$-dimensional. To expand the intersection to a 4-dimensional polytope, the set $U$ 
should contain 3 vectors, making together with $-b_1 - a_2 = c_{12}$ a 4-dimensional linearly independent set. This can only be achieved
by taking one vector from each pair
\begin{equation}\label{eq:pairs}
(c_{34}, c_{56}), \quad (c_{35}, c_{46}), \quad (c_{36}, c_{45}).
\end{equation}

On the other hand, let $U_2$ be a $3$-element set obtained by an arbitrary choice of representatives from each pair in~\eqref{eq:pairs}.
Then $P_V(\Esix) + Z(U_2)$ has a $4$-belt generated by a $4$-face parallel to $\left\langle a_2 - a_1, b_1 - a_1\right\rangle^{\bot}$. 

The parallelotope $P_V(\Esix) + Z(U_2)$ has a 4-face $G = [b_2, -a_1] + Z(U_2)$ which is standard. The standard vector of $G$ corresponds
to $b_2 - a_1$. Therefore extending $P_V(\Esix) + Z(U_2)$ to $P_V(\Esix) + Z(U)$ should turn $G$ into a facet. For this purpose it is
necessary and sufficient for $U$ to contain a vector $p_4$ strongly transversal to $G$.

Therefore $p_4$ is strongly transversal to the edge $[b_2, -a_1]$ of $P$, but is not parallel to $G$. By Lemma~\ref{lem:transversal_to_m_edge},
this is possible only for $p_4 = a_2$ or $p_4 = b_1$.

All the arguments are reversible. Indeed, if $U$ contains $U_2 = \{p_1, p_2, p_3\}$ as a subset, then $P_V(\Esix) + Z(U_2)$
has a standard 4-face $G = [b_2, -a_1] + Z(U_2)$ whose standard vector is exactly $t = b_2 - a_1$. Similarly to the argument of 
Lemma~\ref{lem:transversal_to_m_edge}, $t$ is also a lattice vector of $\Lambda (P_V(\Esix) + Z(U_2) + z(p_4))$. Thus
$$ (P_V(\Esix) + Z(U_2) + z(p_4)) \cap (P_V(\Esix) + Z(U_2) + z(p_4) + t) = G + z(p_4), $$
which is a 5-polytope. 

Hence the 4-belt of $P_V(\Esix) + Z(U_2)$ has turned into a 6-belt of $P_V(\Esix) + Z(U_2) + z(p_4)$, 
which persists in $P + Z(U)$. Assertion 2 is proved completely.

\noindent{\bf 3.} Let the $4$-face $G$ generate the $6$-belt of $P_V(\Esix) + Z(U)$, whose facet vectors correspond to 
$$\{\pm(b_2 - a_1), \pm(b_3 - a_1), \pm(b_3 - b_2)\}.$$ 

First we prove that the affine hull of $G$ is spanned by 4 vectors of $\mathcal M$. 

Indeed, $P_V(\Esix)$ has the $m$-edge $[b_2, -a_1]$ with standard vector $t = b_2 - a_1$. The $6$-belt from the condition of Assertion 3 
has a facet  $[b_2, -a_1] + Z(U_1)$ for some $U_1\subset U$. All zone vectors of the zonotope $[b_2, -a_1] + Z(U_1)$ belong to $\mathcal M$, 
therefore every subface of $[b_2, -a_1] + Z(U_1)$ is also a zonotope with all zone vectors in $\mathcal M$. In particular, this holds for $G$.

Put
$$F = \conv\{ b_3, a_3, c_{12}, c_{24}, c_{25}, c_{26}, -b_2, -a_2, -c_{13}, -c_{34}, -c_{35}, -c_{36} \},$$
i.e., $F$ is the facet of $P_V(\Esix)$ with the facet vector $b_3 - b_2$. Let $U_2 \subset U_1 \cup \{c_{12}\}$ be the $4$-element vector set spanning the linear hull of $G$. $G$ is parallel to $F$, so $q(t) \notin U_2$. Thus $U_2\subset U_1$ and $P_V(\Esix) + Z(U_2)$ is a parallelotope.

But $P_V(\Esix) + Z(U_2)$ has nonzero width in the direction $\langle U_2 \rangle$. Thus it has a belt parallel to $\langle U_2 \rangle$. The facet vectors
of this belt cannot correspond to other vectors than $\{\pm(b_2 - a_1), \pm(b_3 - a_1), \pm(b_3 - b_2)\}$, because they persist in the belt of 
$P_V(\Esix) + Z(U)$ parallel to $G$.

The belt of $P_V(\Esix) + Z(U_2)$ parallel to $\langle U_2 \rangle$ should be a $6$-belt, because every $4$-belt requires 8 coplanar standard vectors, but the plane 
$$\langle b_2 - a_1, b_3 - a_1, b_3 - b_2 \rangle$$ 
contains only six. Hence $P_V(\Esix) + Z(U_2)$ has facets that have expanded from the edges $[b_2, -a_1]$ and $[b_3, -a_1]$. 
The only candidates for these facets are $[b_2, -a_1] + Z(U_2)$ and $[b_3, -a_1] + Z(U_2)$ respectively, because a 
$5$-dimensional zonotope has at least 5 zone vectors. By Lemma 3, if a standard face expanded to a standard face, all intermediate expansions are
also standard. As a result, for every $p \in U_2$ the 2-faces $[b_2, -a_1] \oplus z(p)$ and $[b_3, -a_1] \oplus z(p)$ are standard for $P_V(\Esix) + z(p)$. 
Applying Lemma~\ref{lem:transversal_to_m_edge} and transforms from $W(\Esix)$, that send the segment $[a_2, -b_1]$ to $[b_2, -a_1]$ and 
$[b_3, -a_1]$ respectively, gives
\begin{multline*}
U_2 \subset \{b_1, a_2, c_{34}, c_{35}, c_{36}, c_{45}, c_{46}, c_{56} \} \cap \\
\{b_1, a_3, c_{24}, c_{25}, c_{26}, c_{45}, c_{46}, c_{56} \} = \{b_1, c_{45}, c_{46}, c_{56} \}.
\end{multline*}

Since $|U_2| = 4$, $U_2 = \{b_1, c_{45}, c_{46}, c_{56} \}$.

However, the polytope $P_V(\Esix) + Z(U_2)$ has a belt consisting of facets $\pm (F + Z(U_2))$, $\pm ([b_2, -a_1] + Z(U_2))$, and $\pm ([b_2, -a_1] + Z(U_2))$,
and this belt remains in all polytopes $P_V(\Esix) + Z(U)$, where $U\supseteq U_2$, if only $P_V(\Esix) + Z(U)$ is a parallelotope.

\noindent{\bf 4.} The facet, whose facet vectors corresponds to the vector $a_2 - a_1$, is orthogonal to $a_2 - a_1$. Further,
the facet, whose facet vectors corresponds to the vector $b_1 - a_2$, is orthogonal to $b_1 - a_2$. Indeed, if a segment
$z(p)$, $p \in \mathcal M$, is strongly transversal to the edge $[b_1, -a_2]$, then, by Lemma~\ref{lem:transversal_to_m_edge}, $p$ is orthogonal 
to $b_1 - a_2$. Therefore the facet obtained by extensions of the edge edge $[b_1, -a_2]$ is orthogonal to $b_1 - a_2$.

Assume that the 6-belt with facet vectors corresponding to
$$\{\pm(a_2 - a_1), \pm(b_1 - a_2), \pm(b_2 - a_1)\}$$
exists. Then it is generated by a 4-face parallel to 
$$\left\langle a_2 - a_1, b_1 - a_2 \right\rangle^\bot = \left\langle a_2 - a_1, b_1 - a_1\right\rangle^\bot.$$

But a belt with this direction of the generating 4-face should contain two facets parallel to $\left\langle b_1 - a_1 \right\rangle^\bot$,
because such facets are already in $P_V(\Esix)$. The facet vectors of these facets correspond to $\pm(b_1 - a_1)$. But by assumption, there are
no such facet vectors in the 6-belt we consider. The contradiction finishes the proof of Assertion~4.

\noindent{\bf 5.} Similarly to the proof of Assertion 3 we can establish that there is a $4$-element set $U_2 \subset U$
such that $P_V(\Esix) + Z(U_2)$ has a $6$-belt consisting of facets $\pm([a_1, -b_2] + Z(U_2))$, $\pm([a_1, -c_{12}] + Z(U_2))$ and
$\pm([b_2, -c_{12}] + Z(U_2))$. This means that for every $p \in U_2$ the 2-faces $[a_1, -b_2] \oplus z(p)$, $[a_1, -c_{12}] \oplus z(p)$, 
and $[b_2, -c_{12}] \oplus z(p)$ are standard for $P_V(\Esix) + z(p)$. As a result,
\begin{multline*}
U_2 \subset \{a_2, b_1, c_{34}, c_{35}, c_{36}, c_{45}, c_{46}, c_{56} \} \cap \\
\{a_3, a_4, a_5, a_6, c_{23}, c_{24}, c_{25}, c_{26} \} \cap\\
\{b_3, b_4, b_5, b_6, c_{13}, c_{14}, c_{15}, c_{16} \} = \varnothing.
\end{multline*}

A contradiction proves that no $6$-belt can appear as in the condition of Assertion~5.

\noindent{\bf 6.} Assume that a parallelotope $P_V(\Esix) + Z(U)$ has a $6$-belt with facet vectors corresponding to 
$\{\pm(b_2 - a_1), \pm(b_4 - a_3), \pm(c_{23} - c_{14})\}$. Then the facets of such a 6-belt are extensions of edges $\pm [a_1, -b_2]$,
$\pm [a_3 - b_4]$, and $\pm [c_{23}, -c_{14}]$. But these edges are pairwise disjoint, then so are their extensions.

Indeed, let $G_1$ and $G_2$ be faces of $P_V(\Esix)$ such that $G_1 + Z(U_1)$ and $G_2 + Z(U_2)$ are faces of $P_V(\Esix) + Z(U)$ satisfying
$$(G_1 + Z(U_1)) \cap (G_2 + Z(U_2)) \neq \varnothing.$$
Then one can see that the intersection is of the form 
$$(G_1 + Z(U_1)) \cap (G_2 + Z(U_2)) = G_3 + Z(U_3)$$ 
with $G_3 \subset G_1 \cap G_2$.

Hence a contradiction, because facets making together a belt cannot be pairwise disjoint.

\end{proof}


\begin{thebibliography}{99}

\bibitem[Aig79]{Aig}
M. Aigner, Combinatorial Theory,
Springer-Verlag, 1979.

\bibitem[Ba91]{Ba}
E.P. Baranovskii, Partition of Euclidean spaces into $L$-polytopes of
some perfect lattices,
Proc. Steklov Inst. Math. {\bf 196} (1991) 29--51.

\bibitem[Barn57]{Brn}
E.S. Barnes, The complete enumeration of extreme senary forms,
Phil. Trans. Roy. Soc., London {\bf A249} (1957) 461--506.

\bibitem[BLSWZ93]{BZ}
A. Bj\"orner, M. LasVernas, B. Sturmfels, N. White, G. Ziegler, Oriented Matroids, Encyclopedia of Math. and its Appl. {\bf 46}, Cambridge Univ. Press, 1993.

\bibitem[Burn11]{Bur}
W. Burnside, Theory of groups of finite order,
Cambridge Univ. Press, 1911.

\bibitem[CS91]{CS}
J.H. Conway, N.J.A. Sloane, The cell structures of certain lattices,
in: Miscellanea Mathematica, Springer-Verlag, (1991) 71--107.

\bibitem[Cox83]{Cox}
H.S.M. Coxeter, The twenty-seven lines on the cubic surface, in:
Convexity and its Applications, Birkh\"auser Verlag, (1983) 111--119.

\bibitem[DG99]{DaG}
V.I. Danilov, V.P. Grishukhin, Maximal unimodular systems of vectors,
Eur. J. Combin. {\bf 20} (1999) 507--526.

\bibitem[De29]{De}
B.N. Delaunay, Sur la partition r\'egulie\`ere de l'espace \'a 4 dimensions, Izvestia Acad. Sci. URSS, Ser VII, Sect. phys.-math. sci. {\bf 1-2} (1929) 79--100, 147--164.

\bibitem[DeGr04]{DeG}
M. Deza, V. Grishukhin, Voronoi's conjecture and space tiling zonotopes, Mathematika {\bf 51} (2004) 1--10.

\bibitem[Do09]{Do}
N.P. Dolbilin, Properties of faces of parallelohedron, Proc. Steklov Inst. of Math. {\bf 266} (2009) 112--126.

\bibitem[Du10]{polyhedral}
M. Dutour, polyhedral, \url{http://drobilica.irb.hr/~mathieu/Polyhedral/index.html}

\bibitem[DSV09]{complexity}
M. Dutour Sikiri\'c, A. Sch\"urmann, F. Vallentin, Complexity and algorithms for computing Voronoi cells of lattices, Math. Comput. {\bf 78} (2009) 1713--1731.


\bibitem[DSV08]{duke}
M. Dutour Sikiri\'c, A. Sch\"urmann, F. Vallentin, A generalization of Voronoi's reduction theory and applications,
Duke Math. J. {\bf 142} (2008) 127--164.

\bibitem[En98]{engelE6s}
P. Engel, Investigations of parallelohedra in $\RR^d$,
in: Voronoi's Impact on Modern Science, vol 2, eds. P. Engel and H. Syta, Institute of Math., Kyiv, (1998) 22--60.


\bibitem[ErRy94]{dicing}
R.M. Erdahl, S.S. Ryshkov, On lattice dicings,
Eur. J. Combin. {\bf 15} (1994) 459--481.

\bibitem[Er99]{dicing2}
R.M. Erdahl,
Zonotopes, Dicings, and Voronoi's conjecture on Parallelohedra,
Eur. J. Combin. {\bf 20} (1999) 527--549.

\bibitem[ErRy02]{35tope}
R.M. Erdahl, K. Rybnikov, Voronoi-Dickson Hypothesis on Perfect Forms and L-types,
Peter Gruber Festshrift: Rendiconti del Circolo Matematiko di Palermo, Serie II, Tomo LII, part I (2002) 279--296.

\bibitem[Gr04]{Gr}
V.P. Grishukhin, Parallelotopes of non-zero width,
Math. Sbornik {\bf 195} (2004) 59--78,
translated in: Sb. Math. {\bf 195} (2004) 669--686.

\bibitem[Gr06a]{free}
V.P. Grishukhin, Free and Nonfree Voronoi Polyhedra (in Russian),
Matematicheskie Zametki {\bf 80} (2006) 367--378,
translated in Math. Notes {\bf 80} (2006) 355--365.

\bibitem[Gr06b]{GrSegment}
V.P. Grishukhin, The Minkowski sum of a parallelotope and a segment (in Russian), Math. Sbornik {\bf 197} (2006) 15--32,
translated in: Sb. Math. {\bf 197} (2006) 1417--1433.

\bibitem[Hu90]{humphreyscoxeter}
J.E. Humphreys, Reflection groups and Coxeter groups,
Cambridge University Press, 1990.


\bibitem[Mu80]{mcmullen}
P. McMullen, Convex bodies which tile space by translation,
Mathematika {\bf 27} (1980) 113--121.

\bibitem[NeSo12]{nebe_sloane}
G. Nebe, N.J.A. Sloane, A catalog of lattices, \url{http://www.math.rwth-aachen.de/~Gabriele.Nebe/LATTICES/}

\bibitem[Or05]{Or}
A.~Ordine, Proof of the Voronoi conjecture on parallelotopes in a new special case, Ph.D. Thesis, Queen's University, Ontario, 2005.

\bibitem[Or04]{Ordine2}
A.~Ordin,
A criterion for the nondecomposability of a parallelohedron into the Minkowski direct sum. (Russian)
Chebyshevski\u\i\ Sb. {\bf 5} (2004) 160--164.

\bibitem[Re88]{Re}
M. Reid, Undergraduate algebraic geometry,
Lond. Math. Soc. student texts, Cambridge Univ. Press, 1988.



\bibitem[RyZa05]{rybnikov}
K. Rybnikov, T. Zaslavsky, Criteria for balance in abelian gain graphs, with applications to piecewise-linear geometry, Disc. Comput. Geom. {\bf 34} (2005) 251--268.

\bibitem[Sch09]{schurmannBook}
A. Sch\"urmann,
Computational geometry of positive definite quadratic forms,
University Lecture Notes, AMS, 2009.

\bibitem[Se80]{Se}
P.D. Seymour, Decomposition of regular matroids,
J. Comb. Theory ser. {\bf B 28} (1980) 305--359.

\bibitem[Sha88]{Sha}
I.R. Shafarevich, Foundations of algebraic geometry Vol. 1 (in Russian),
Nauka, Moscow, 1988,
translated in: Springer Verlag, 1994.

\bibitem[Tr98]{tru}
K. Truemper, Matroid decomposition, revised edition, Leibniz, Plano, Texas, 1998.

\bibitem[Ve54]{venkov}
B.A. Venkov, On a class of Euclidean polytopes (in Russian),
Vestnik Leningradskogo Univ., Ser. Math, Phys. Chem. {\bf 9} (1954) 11--31.

\bibitem[Ve59]{venkovProjection}
B.A. Venkov, On projection of parallelohedra (in Russian),
Math. Sbornik {\bf 49} (1959) 207--224.

\bibitem[Ve06]{Vegh}
A. V\'egh, R\'acsok, k\"or- \'es g\"ombelrendez\'esek, PhD thesis (in Hungarian), Budapest University of Technology and Economics, Budapest, 2006.

\bibitem[Vo08]{voronoi}
G.F. Voronoi,
Nouvelles applications des param\`etres continus
\`a l\`a th\'eorie des formes quadratiques, Deuxi\`eme M\'emoire,
Recherches sur les parall\'elloedres primitifs,
J. Reine Angew. Math. {\bf 134} (1908) 198--287 and {\bf 136} (1909) 67--181.

\bibitem[Zo96]{zong}
C. Zong, Strange phenomena in convex and discrete geometry,
Universitext, Springer-Verlag, New York, 1996.

\end{thebibliography}
\end{document}